\definecolor{db}{RGB}{0, 0, 130}
\definecolor{wildstrawberry}{rgb}{1.0, 0.26, 0.64}
\definecolor{rp}{rgb}{0.25, 0, 0.75}
\definecolor{dg}{rgb}{0, 0.5, 0}
\newcommand{\R}{\mathbb{R}}
\newcommand{\Q}{\mathbb{Q}}
\newcommand{\N}{\mathbb{N}}
\newcommand{\EE}{\mathbb{E}}
\newcommand{\PP}{\mathbb{P}}
\newcommand{\Ccal}{\mathcal{C}}
\newcommand{\Pp}{\mathcal{P}}
\newcommand{\pK}{{\bm p}}
\newcommand{\qK}{{\bm q}}
\newcommand{\rK}{{\bm r}}
\newcommand{\rKs}{\widetilde{\bm r}}
\newcommand{\Hreg}{{\bm \zeta}}
\newcommand{\Lsol}{L^{1}\cap L^{\rK}(\R^d)}
\newcommand{\Lsolsing}{L^{1}\cap L^{\infty}(\R^d)}
\newcommand{\Hyp}{\mathbf{A}}
\newcommand{\HypK}{\mathbf{A}^K}
\newcommand{\HypKK}{\widetilde{\mathbf{A}}^K}
\newcommand{\HypKKK}{\widetilde{\widetilde{\mathbf{A}}}{}^K}
\newcommand{\customlabel}[2]{%
   \protected@write \@auxout {}{\string\newlabel {#1}{{#2}{\thepage}{#2}{#1}{}}}%
   \hypertarget{#1}{#2\hspace{-0.14cm}}
}
\numberwithin{equation}{section}
\newtheorem{theorem}{Theorem}[section]
\newtheorem{definition}[theorem]{Definition}
\newtheorem{corollary}[theorem]{Corollary}
\newtheorem{lemma}[theorem]{Lemma}
\newtheorem{proposition}[theorem]{Proposition}
\newtheorem{remark}[theorem]{Remark}
\newcounter{author}
\renewcommand*\author[1]{%
  \stepcounter{author}%
  \ifnum\c@author=1
    \gdef\@author{#1}%
  \else
    \xdef\@author{\unexpanded\expandafter{\@author\and#1}}%
  \fi
  \csgdef{author@\the\c@author}{#1}}
\newcommand*\email[1]{%
  \csgdef{email@\the\c@author}{#1}}
\newcommand*\address[1]{%
  \csgdef{address@\the\c@author}{#1}}
  \xdef\author@count{\the\c@author}%
\newcommand*\print@authors{%
  \ifnum\c@author>\author@count
  \else
    \print@author{\the\c@author}%
    \advance\c@author by 1
    \expandafter\print@authors
  \fi}
\newcommand*\print@author[1]{%
  \par\medskip
  \begin{tabular}{@{}l@{}}%
    \textsc{\csuse{author@#1}}\\
    \csuse{address@#1}\\
    \textit{E-mail address}:
    \href{mailto:\csuse{email@#1}}{\csuse{email@#1}}
  \end{tabular}}
\author{Christian Olivera}
\address{Departamento de Matem\'atica, Universidade Estadual de Campinas, Brazil} 
\email{colivera@ime.unicamp.br}
\author{Alexandre Richard}
\address{Universit\'e Paris-Saclay, CentraleSup\'elec, MICS and CNRS FR-3487, 91190 Gif-sur-Yvette, France}
\email{alexandre.richard@centralesupelec.fr}
\author{Milica Toma\v sevi\'c}
\address{CNRS, CMAP, Ecole Polytechnique, I.P. Paris, 91128 Palaiseau, France}
\email{milica.tomasevic@polytechnique.edu}
\date{\empty}
\title{ \Large{\textbf{Quantitative particle approximation of nonlinear Fokker-Planck equations with singular kernel}}}
\begin{document}

\maketitle

\begin{abstract}
In this work, we study the convergence of the empirical measure of moderately interacting particle systems with singular interaction kernels. First, we prove quantitative convergence of the time marginals of the empirical measure of particle positions towards the solution of the limiting nonlinear Fokker-Planck equation. Second, we prove the well-posedness for the McKean-Vlasov SDE involving such singular kernels and the convergence of the empirical measure towards it (propagation of chaos).

Our results only require very weak regularity on the interaction kernel, which permits to treat models for which the mean field particle system is not known to be well-defined. For instance, this includes attractive kernels such as Riesz and Keller-Segel kernels in arbitrary dimension. For some of these important examples, this is the first time that a quantitative approximation of the PDE is obtained by means of a stochastic particle system. In particular, this convergence still holds (locally in time) for PDEs exhibiting a blow-up in finite time. 
 The proofs are based on a semigroup approach combined with a fine analysis of the regularity of infinite-dimensional stochastic convolution integrals. 
\end{abstract}


\medskip

\noindent\textbf{MSC2020 subject classification: }60K35, 60H30, 35K55, 35Q84.

\setcounter{tocdepth}{2}
\renewcommand\contentsname{}
\vspace{-1cm}

\tableofcontents

\section{Introduction and main results}

In this work, we are interested in the stochastic approximation of nonlinear Fokker-Planck Partial Differential Equations (PDEs) of the form
\begin{equation}\label{eq:PDE}
\begin{cases}
&\partial_t u(t,x) = \Delta u(t,x) - \nabla \cdot \big(u(t,x) ~  K \ast_{x} u (t,x) \big),\quad t>0,~x\in\R^d,\\
&u(0,x) = u_0(x), 
\end{cases}
\end{equation}
by means of moderately interacting particle systems. The main interest here is for kernels $K$ with a singularity at the origin.

Although we will not limit ourselves to kernels that derive from a potential, a typical family of singular kernels that we will consider in this work derives from Riesz potentials, defined in any dimension $d$ as
\begin{align}\label{eq:defRiesz}
V_{s}(x) := 
\begin{cases}
|x|^{-s} & \text{ if } s\in(0,d) \\
-\log|x|  & \text{ if } s=0 
\end{cases}
, \quad x\in\R^d .
\end{align}
The associated kernel is then $K_{s}:= \pm \nabla V_{s}$, the sign deciding whether the interaction is attractive or repulsive. If $d\geq 2$ and $s=d-2$, this is the Coulomb potential that characterises electrostatic and gravitational forces (depending on the sign). 
Our results will also cover several classical models such as the $2d$ Navier-Stokes equation, which in vorticity form can be written as in \eqref{eq:PDE} with the Biot-Savart kernel, 
and the parabolic-elliptic Keller-Segel PDE in any dimension $d\geq 1$, which models the phenomenon of chemotaxis. These kernels are presented and discussed in more details in Section \ref{sec:examples}.

~

The problem of deriving a macroscopic equation from a microscopic model of interacting particles can be traced back to the original inspiration of \citet{Kac}, in the context of the Boltzmann equation. Since then, a huge literature has been devoted to interacting particle systems and their convergence to nonlinear Fokker-Planck equations such as \eqref{eq:PDE}. 
 In the case of Lipschitz continuous interaction kernels, this problem is now well-understood (see \cite{Sznit,MeleardLN,Oelschlager85,Meleard}) and the propagation of chaos holds. 
 That is to say, the empirical measure (on the space of trajectories) of the associated particle system converges in law towards the weak solution of a McKean-Vlasov SDE associated to \eqref{eq:PDE}.

There are fewer works when the interaction kernel is singular, despite the great importance 
it represents both theoretically and in applications. One can mention the early works of \citet{Marchioro} and \citet{Osada} on the $2d$ Navier-Stokes equation (see also \cite{FHM} more recently); while \citet{Stigm} and \citet{BossyTalay} worked on Burgers' equation.  \citet{CepaLepingle} studied one-dimensional electrical particles with repulsive interaction,
 and more recently \citet{FournierHauray} studied a stochastic particle system approximating the Landau equation with moderately soft potentials. 
Outside the scope of physics, interesting biological models have arisen, for instance in neuroscience and in the modelling of chemotaxis (see \cite{DIRT,FournierJourdain,CattiauxPedeches,JabirTalayTomasevic,Godinho}).

 When the interaction is attractive in addition to being singular, it may not be possible to define the particle system in mean-field interaction, and therefore to obtain propagation of chaos. Even when it is possible to define the particles, the propagation of chaos may not always hold. 
For example, in the tricky case of the $2d$ parabolic-elliptic Keller-Segel model, the mean-field particle system was shown to be well-defined \cite{FournierJourdain,CattiauxPedeches}, but the convergence (on the level of measures on the space of trajectories) is known to hold only for small values of the critical parameter of the equation (see \cite{FournierJourdain}). For the $d$-dimensional parabolic-elliptic Keller-Segel model with $d\geq3$ or for the attractive Riesz kernel with $s\in (d-2,d)$, we are not aware of any existence result for the mean-field particle system.

This work is motivated by the approximation of the PDE \eqref{eq:PDE} in cases when the kernel is attractive and singular, and the associated mean-field interacting particle system is not known to be well-defined. We thus consider the following moderately interacting particle system:
\begin{equation}\label{eq:IPS0}
dX_{t}^{i,N}= F_{A}\left( \frac{1}{N}\sum_{k=1}^{N} (K \ast V^{N})(X_{t}^{i,N} -X_{t}^{k,N})\right)\; dt + \sqrt{2} \; dW_{t}^{i}, \quad t\leq T,\quad  1\leq i \leq N,
\end{equation}
where $(W^i)_{1\leq i\leq N}$ are independant standard Brownian motions, $V^N$ is a mollifier and $F_{A}$ is a smooth cut-off function. 
 Hence,  the existence of strong solutions for \eqref{eq:IPS0} is ensured. 
  This kind of interacting particle system, introduced by~\citet{Oelschlager85}, relies on a smoothing of the interaction kernel at the scale $N^{-\alpha},\, \alpha\in [0,1]$. Observe that the interaction kernel $K\ast V^N(x)$ is very close to $K(x)$ when $|x|$ is sufficiently large compared to $N^{-\alpha}$. 
However, the difficulty is to prove that at the limit $N\to \infty$, the system \eqref{eq:IPS0} behaves as the mean-field system would. In this work, we show that:
\begin{enumerate}[label=(\alph*),leftmargin=*]
\item $\{\mu^N_{t}=\frac{1}{N} \sum_{i=1}^N \delta_{X^{i,N}_t},\, t\in [0,T]\}$, the marginals of the empirical measure of \eqref{eq:IPS0}, converge to the solution of the PDE \eqref{eq:PDE} for values of the smoothing parameter $\alpha$ which can be up to $(\frac{1}{d})^-$ for some of the models we consider, 
which is the typical distance between the particles in problems coming from statistical physics with repulsive kernels. 
We get a rate of convergence of order $N^{-\varrho}$, with $\varrho$ which can be up to $(\frac{1}{d+2})^-$. Moreover, one can take $F_{A}(x)\equiv x$ in \eqref{eq:IPS0}, at the price of a weaker form of convergence.

\item The system \eqref{eq:IPS0} propagates chaos towards the following nonlinear equation (without the cut-off and the mollifier):
\begin{equation}\label{eq:McKeanVlasov}
\begin{cases}
& d X_t = K \ast u_t (X_t)   \, dt +\sqrt{2} dW_t , \quad t\leq T,\\
& \mathcal{L}(X_t) = u_t, ~\mathcal{L}(X_0) = u_0. 
\end{cases}
\end{equation}
\end{enumerate}
Notice that it is not \emph{a priori} clear that \eqref{eq:McKeanVlasov} is well-posed, due to the singularity of $K$.  Hence, we obtain first a well-posedness result for \eqref{eq:McKeanVlasov}. 

As an application, one can use the rate in (a) and a time discretization of \eqref{eq:IPS0} to propose a numerical approximation of the PDE \eqref{eq:PDE}. We leave this line of investigation for a future work.

~

In the rest of this introduction, we will detail the notations and the assumptions necessary for our study. Then, we will present our main results in Subsections \ref{subsec:MainResult1} and \ref{subsec:MainResult2}. The organisation of the paper is then given in Subsection \ref{subsec:orga}.

~

\subsection{Notations}

\begin{itemize}[leftmargin=*]

\item  For any  $\beta\in \R$ and $p\geq1$,  we denote by
$H^{\beta}_{p}(  \mathbb{R}^{d})  $ the \emph{Bessel potential space}
\[H_{p}^{\beta}(  \mathbb{R}^{d}):= \Big\{ u \text{ tempered distribution; }  \;  \mathcal{F}^{-1}\Big( \big(1+|\cdot|^{2}\big)^{\frac{\beta}{2}}\; \mathcal{F} u(\cdot) \Big) \in  L^p(\mathbb R^d)\Big\},  \]
where $\mathcal Fu$ denotes the \emph{Fourier transform} of $u$. This space is endowed with the norm
 \begin{equation*}
 \| u \|_{\beta,p} = \left\| \mathcal{F}^{-1}\Big( \big(1+|\cdot|^{2}\big)^{\frac{\beta}{2}
}\; \mathcal{F} u(\cdot) \Big) \right\|_{L^p(\R^d)}. 
 \end{equation*}
 In particular, note that 
 \begin{equation}\label{eq:ineqBesselnorms}
  \left\Vert u\right\Vert _{0,p}=\left\Vert u\right\Vert_{L^p(\mathbb{R}^d)} \quad \text{and for any } \beta \leq \gamma, \quad \left\Vert u\right\Vert _{\beta,p} \leq \left\Vert u\right\Vert_{\gamma,p}.
 \end{equation}
The space $H_{p}^\beta(\R^d)$ is 
associated to the fractional operator $(I-\Delta)^\frac{\beta}{2}$ defined as (see e.g. \cite[p.180]{Triebel} for more details on this operator):
\begin{align}\label{eq:defFracLaplacian}
(I-\Delta)^\frac{\beta}{2} f := \mathcal{F}^{-1}\left((1+|\cdot|^2)^{\frac{\beta}{2}} \mathcal{F}f \right) .
\end{align}
For $p=2$, these are Hilbert spaces when endowed with the scalar product
\begin{equation}\label{eq:Hilbertbeta}
\langle u,v\rangle_{\beta} := \int_{\R^d} \left( 1+|\xi|^{2}\right)^{\beta} 
 \mathcal{F} u(\xi) ~ \overline{\mathcal{F} v}(\xi) ~d\xi
\end{equation} 
and the norm simply denoted by $\left\Vert u\right\Vert _{\beta} := \left\Vert u\right\Vert _{\beta,2} = \sqrt{\langle u,u\rangle_{\beta}}$.

In addition, for $p\geq 1$ we will denote by $p'$ its conjugate, i.e. $p'\geq 1$ such that $\frac{1}{p}+\frac{1}{p'}=1$.

 \item In this paper, $(e^{t\Delta })_{t\geq 0 }$ is the heat  semigroup. That is, for $f \in {L}^p(\R^d)$,
\[
\left(  e^{t\Delta}f\right)  \left(  x\right)  =\int_{\mathbb{R}^{d}} g_{2t}(x-y) \,  f\left(  y\right)  dy ,
\]
where $g$ denotes the usual $d$-dimensional Gaussian density function:
\begin{align}\label{eq:def-heat-kernel}
g_{\sigma^2}(x)= \frac{1}{(2\pi \sigma^2)^{\frac{d}{2}}} e^{-\frac{|x|^2}{2\sigma^2}} .
\end{align}

\item For $\mathcal{X}$ some normed vector space, the space $\Ccal([0,T];\mathcal{X})$ of continuous functions from the time interval $[0,T]$ with values in $\mathcal{X}$ is classically endowed with the norm
\begin{align*}
\|f\|_{T,\mathcal{X}} = \sup_{s\in [0,T]} \|f_{s}\|_{\mathcal{X}}.
\end{align*}

\item Let us denote by $\mathcal{N}_{\delta}$ the H\"older seminorm of parameter $\delta\in(0,1]$, that is, for any function $f$ defined over $\R^d$:
\begin{equation}\label{eq:HolderNorm}
\mathcal{N}_{\delta}(f) := \sup_{x\neq y \in \R^d} \frac{|f(x) - f(y)|}{|x-y|^\delta}.
\end{equation}
The set of continuous and bounded functions on $\R^d$ which have finite $\mathcal{N}_{\delta}$ seminorm is the H\"older space $\Ccal^\delta(\R^d)$.

\item If $u$ is a function or stochastic process defined on $[0,T]\times \R^d$, we will most of the time use the notation $u_{t}$ to denote the mapping  $x\mapsto u(t,x)$.

\item Depending on the context, the brackets $\langle\cdot , \cdot \rangle$ will denote either the scalar product in some $L^2$ space or the duality bracket between a measure and a function.

\item The ball centred in the origin that has radius $1$ is denoted by $\mathcal{B}_1$.

\item Finally, for functions from $\R^d$ to $\R$, the space of $n$-times ($n\in\N$) differentiable functions is denoted by $\Ccal^n(\R^d)$; the space of $n$-times ($n\in\N$) differentiable functions with bounded derivatives of any order between $0$ and $n$, denoted by $\Ccal_{b}^n(\R^d)$; and the space of $n$-times ($n\in\N$) differentiable functions with compact support, denoted by $\Ccal_{c}^n(\R^d)$. For $n=0$, we will denote the space of continuous (resp. bounded continuous, and continuous with compact support) by $\Ccal(\R^d)$ (resp. $\Ccal_{b}(\R^d)$ and $\Ccal_{c}(\R^d)$). 

\item Throughout the paper, we denote by $C$ a constant that may change from line to line.

\end{itemize}

\subsection{Setting}\label{subsec:assumptions}

We start this section with a precise definition of the particle system \eqref{eq:IPS0}. 

Let  $A>0$ and $f_A: \R \to \R$ be a $\Ccal_b^2(\R)$ function such that
\begin{enumerate}[label=(\roman*)]
\item $f_A(x)=x$, for $x\in [-A,A]$,
\item $f_A(x)= A$, for $x> A+1$ and $f_A(x)= -A$, for $x< -(A+1)$,
\item $\|f_{A}'\|_\infty \leq 1$ and $\|f_{A}''\|_\infty <\infty$.
\end{enumerate}
As a consequence, $\|f_A\|_\infty\leq A+1$.
Now $F_{A}$ is given by
\begin{equation}\label{eq:defF0}
 F_{A}  : (x_{1},\dots, x_{d})^T \mapsto \left(f_{A}(x_{1}) ,\dots, f_{A}(x_{d})\right)^T .
\end{equation}
Let $V:\R^d \to \R_+$ be a smooth probability density function, and assume further that $V$ is 
compactly supported. For any $x \in \mathbb R^d$, define
\begin{equation}\label{eq:VN}
V^{N}(x):=N^{d\alpha}V(N^{\alpha}x), \qquad \text{for some } \alpha \in [0,1].
\end{equation}
Below, $\alpha$ will be restricted to some interval $(0,\alpha_{0})$, see Assumption \eqref{C04}.

Let $T>0$. For each $N\in\mathbb{N}$, the particle system \eqref{eq:IPS0} reads more precisely:
\begin{equation}\label{eq:IPS}
\begin{cases}
dX_{t}^{i,N}=  F_{A}\bigg( \frac{1}{N}\displaystyle\sum_{k=1 }^{N} (K \ast V^{N})(X_{t}^{i,N} -X_{t}^{k,N})\bigg)\; dt + \sqrt{2} \; dW_{t}^{i}, \quad t\leq T,~  1\leq i \leq N,  \\
X_{0}^{i,N},~ 1\leq i \leq N, \quad \text{are independent of } \{W^i,~1\leq i \leq N\},
\end{cases}
\end{equation}
where  $\{(W_{t}^{i})_{t\in[0,T]}, \; i\in\mathbb{N}\}$ is a family of independent standard $\R^d$-valued Brownian motions defined on a filtered probability space $\left(\Omega,\mathcal{F},(\mathcal{F}_{t})_{t\geq 0},\PP\right)$.

Let us denote the empirical measure on $\mathcal{C}([0,T],\R^d)$ of $N$ particles by 
\begin{align}\label{eq:empiricalMeasure}
\mu^N  =  \frac{1}{N}\sum_{i=1}^N \delta_{X^{i,N}} ~,
\end{align}
and the mollified empirical measure by
$$u^N_{t} := V^N\ast \mu^N_{t} ~, \quad t\in[0,T].$$

The following properties of the kernel will be assumed:

~

\noindent \begin{minipage}{0.06\linewidth}
(\customlabel{HK}{$\HypK$}):
\end{minipage}
\hspace{-0.5cm}
\begin{minipage}[t]{0.94\linewidth}
\raggedright
 \hspace{0.8cm} 
\begin{enumerate}[label= ]
\item(\customlabel{HK1}{$\HypK_{i}$})   $K\in L^\pK (\mathcal{B}_{1})$, for some $\pK\in[1,+\infty]$;

\item(\customlabel{HK2}{$\HypK_{ii}$}) 
 $K\in L^{\qK}(\mathcal{B}_{1}^c)$, for some  $\qK\in[1,+\infty]$;
 
\item  (\customlabel{HK3}{$\HypK_{iii}$})  There exists  $\rK \geq \max(\pK',\qK')$, $\Hreg \in (0,1]$ and $C>0$ such that for any $f\in L^{1}\cap L^{\rK}(\R^d)$, one has
$$\mathcal{N}_\Hreg (K\ast f)\leq C  \|f\|_{L^{1}\cap L^{\rK}(\R^d) }. $$

\end{enumerate}
\end{minipage}

\vspace{0.3cm}

\noindent Throughout the paper, it will be supposed that the parameter $\rK$ is such that $\rK \geq \max (\pK', \qK')$, where $(\pK, \qK)$ are given in \eqref{HK}. Hence, even if some statements do not require the regularity assumption \eqref{HK3}, the parameter $\rK$ (when it appears) will always be considered  larger than  $\max (\pK', \qK')$.

  The Assumption \eqref{HK} is rather mild, 
 and we provide in Section \ref{app:CZ} a sufficient condition which is easier to check in concrete examples. In Section \ref{sec:examples}, we show that Riesz potentials (see \eqref{eq:defRiesz}) up to $s\leq d-2$, whether repulsive or attractive, satisfy \eqref{HK}. 
 Note that the case of more singular kernels, e.g. Riesz kernels with $s\in(d-2,d-1)$, will be treated separately in Section \ref{sec:sing}.

 ~
 
The restriction with respect to the key parameters is given by the following assumption:

~

\noindent 
\begin{minipage}{\linewidth}
(\customlabel{C04}{$\Hyp_{\alpha}$}): \hspace{0.35cm} The parameters $\alpha$ and $\rK$ (which appear respectively in \eqref{eq:VN} and \eqref{HK3}) satisfy 
$$ 0<\alpha<\frac{1}{d + 2 d (\frac{1}{2} - \frac{1}{\rK})\vee 0} .$$

\end{minipage}

\vspace{0.4cm}

Notice that if the integrability of the kernel is such that $\rK$ in \eqref{HK3} could be chosen in the interval $[1,2]$, then $\alpha$ could be arbitrarily close to, but smaller than, $\frac{1}{d}$ (although this will not yield the best possible rate of convergence, see Remark \ref{rk:optimalrate}). This scaling of order $N^{-(\frac{1}{d})^-}$ is close to the typical interparticle distance in physical models with repulsive interactions. This is what we get for instance in the case of $2d$ Navier-Stokes equation and $2d$ Coulomb potential.\\ However, in the case of attractive interaction, the typical interparticle distance is not necessarily $N^{-\frac{1}{d}}$ as the particles may collide or even agglomerate. Indeed, in the case of the parabolic-elliptic Keller-Segel equation, the associated mean-field particles  collide with positive probability (see \cite{FournierJourdain}), whilst in the fully parabolic case they seem to agglomerate (numerically), see \cite[Fig. 7.1, p. 120]{MilicaThesis}.

~
 
Finally, let us state the assumptions on the initial conditions of the system:

~

\noindent 
\begin{minipage}{0.05\linewidth}
(\customlabel{H}{$\Hyp$}):   
\end{minipage}
\begin{minipage}[t]{0.93\linewidth}
\raggedright
Fix $\rK$ from \eqref{HK3}. For any $m\geq1$, $\displaystyle\sup_{1\leq i\leq N,\, N\in \N} \EE|X_{0}^{i,N}|^m<\infty$ and
$$
 \sup_{N \in \N} \EE\left[  \left\|  \mu^N_{0} \ast V^N \right\|_{L^\rK(\R^d)}^m   \right] \ 
<\infty .
$$
\end{minipage}

\vspace{0.4cm}

\noindent A sufficient condition for \eqref{H} to hold is that particles are initially i.i.d. with a law which is in $L^\rK$ and $\alpha d<1$. 

~

We aim to prove the convergence of the mollified empirical measure to the PDE \eqref{eq:PDE}. As \eqref{eq:PDE} preserves the total mass $M:= \int_{\R^d} u_0 (x) \, dx$, we will assume throughout the paper that $M=1$, see Remark \ref{remark:propertiesEDP}. 

Solutions to \eqref{eq:PDE} will be understood in the following mild sense:
\begin{definition}\label{def:defMild}
Given $K$ satisfying \eqref{HK1}-\eqref{HK2}, $u_0 \in \Lsol$ with $\rK \geq \max (\pK', \qK')$ and $T>0$,
 a function $u$ on $[0,T] \times \R^d$ is said to be a mild solution to \eqref{eq:PDE} on $[0,T]$ if
\begin{enumerate}[label=(\roman*)]
\item $u\in \Ccal([0,T]; \Lsol) $; 
\item $u$ satisfies the integral equation
\begin{equation}
\label{eq:mildKS}
u_{t} =  e^{t\Delta} u_0 -  \int_0^t \nabla \cdot ( e^{(t-s)\Delta }  (u_{s}\,  K \ast u_{s}) )\, ds, \quad 0 \leq t \leq T.
\end{equation}
\end{enumerate}
A function $u$ on $[0,\infty) \times \R^d$ is said to be a global mild solution to \eqref{eq:PDE} if it is a mild solution to \eqref{eq:PDE} on $[0,T]$ for all $T>0$.
\end{definition}

In Section \ref{subsec:PDE} the following result about the (local) well-posedness of the PDE \eqref{eq:PDE} is established: 
\begin{proposition}\label{prop:local_existence}
There exists $T>0$ such that the PDE \eqref{eq:PDE} 
admits a 
mild solution $u$ in the sense of Definition \ref{def:defMild} on $[0,T]$. In addition, this mild solution is unique.
\end{proposition}

From here we can explicit a cut-off $A$ (for the function $F_{A}$ defined in \eqref{eq:defF0}) to define the particle system. 
 For a local mild solution $u$ on $[0,T]$, we will use the following cut-off: 
\begin{align}\label{eq:AT'}
A_{T}:= C_{K,d} \, \|u\|_{T,\Lsol}, 
\end{align}
where $C_{K,d}$ depends only on $K$ and $d$ and is given in Lemma \ref{lem:unifbound}. 
 
Finally, as the kernel $K$ may be singular, the PDEs we are interested in may blow up in finite time. For this reason, we will denote by $T_{max}$ the maximal time of existence of a solution to \eqref{eq:PDE} in the sense of Definition \ref{def:defMild}. This means that for any $T<T_{max}$, the PDE admits a mild solution on $[0,T]$. If there exists a global mild solution to our PDE, then $T_{max}=\infty$.

~

\subsection{First main result: Rate of convergence to the PDE}\label{subsec:MainResult1}

Our first main result is the following claim, whose proof is detailed in Section \ref{subsec:proofRate2}:
\begin{theorem}\label{th:rate2}  
Assume that the initial conditions $\{\mu^N_{0}\}_{N\in\N}$ satisfy \eqref{H} and that the kernel $K$ satisfies \eqref{HK}.  Moreover, let \eqref{C04} hold true. 
 Let $T_{\max}$ be the maximal existence time for \eqref{eq:PDE} and fix $T\in(0,T_{\max})$. 
In addition, let the dynamics of the particle system be given by \eqref{eq:IPS} with $A$ greater than $A_{T}$ defined in \eqref{eq:AT'}.

Then, for any $\varepsilon>0$ and any $m\geq 1$, there exists a constant $C>0$ such that for all $N\in\N^*$,
\begin{align*}
\left\| \| u^N-u \|_{T,\Lsol} \right\|_{L^m(\Omega)}
&\leq C \left\| \|u^N_0- u_0\|_{\Lsol} \right\|_{L^m(\Omega)}  + C N^{-\varrho + \varepsilon} ,
\end{align*}
where
\begin{align}\label{eq:def_rho}
\varrho  =  \min \left(\alpha \Hreg,\,\frac{1}{2}\left(1- \alpha (d+d(1-\frac{2}{\rK})\vee 0)\right)  \right) .
\end{align}
\end{theorem}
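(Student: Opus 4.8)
The plan is to derive a mild-type integral equation for $u^N$ analogous to \eqref{eq:mildKS}, compare it with the equation for $u$, and close a Gronwall estimate in $\Ccal([0,t];\Lsol)$. First I would apply Itô's formula to $\varphi(X_t^{i,N})$ for test functions $\varphi$, average over $i$, and convolve with $V^N$ to obtain that $u^N$ satisfies
\begin{equation*}
u^N_t = e^{t\Delta} u^N_0 - \int_0^t \nabla\cdot e^{(t-s)\Delta}\Big( u^N_s \, F_A\big(K\ast V^N \ast \mu^N_s\big)\Big)\, ds + M^N_t,
\end{equation*}
where $M^N_t$ is a stochastic convolution term of the form $\int_0^t e^{(t-s)\Delta}\nabla\cdot\, dZ^N_s$ built from the martingale part $\sqrt{2}\,\frac1N\sum_i \nabla V^N(\cdot - X^{i,N}_s)\,dW^i_s$; it carries the $\frac{1}{\sqrt N}$ fluctuation and the mollification cost. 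Subtracting \eqref{eq:mildKS} gives $u^N_t - u_t = e^{t\Delta}(u^N_0 - u_0) + M^N_t + \int_0^t \nabla\cdot e^{(t-s)\Delta}\big(R_s\big)\,ds$, where $R_s = u_s\,(K\ast u_s) - u^N_s\,F_A(K\ast V^N\ast\mu^N_s)$.

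Next I would estimate the drift remainder $R_s$ in $\Lsol$. Splitting $R_s$ into the telescoping pieces $(u_s - u^N_s)(K\ast u_s)$, $u^N_s\,(K\ast u_s - K\ast V^N\ast u^N_s)$, and $u^N_s(K\ast V^N\ast u^N_s - F_A(K\ast V^N\ast\mu^N_s))$, I would use: the smoothing estimate $\|\nabla\cdot e^{(t-s)\Delta} g\|_{\Lsol}\lesssim (t-s)^{-\frac12 - \frac{d}{2}(1-\frac1\rK)}\|g\|_{L^1}$ type bounds (which are integrable in $s$, this being where a condition on $\rK$ enters), the hypothesis \eqref{HK3} giving $\|K\ast f\|_\infty \lesssim \mathcal{N}_\Hreg$-type control and Hölder bounds, the $L^1\cap L^\rK$ a priori bound on $u$ from Proposition \ref{prop:local_existence}, and the fact that $A \geq A_T$ ensures the cut-off $F_A$ acts as the identity on the relevant quantities so that term vanishes (modulo controlling $\|K\ast V^N\ast\mu^N_s\|_\infty$ via $u^N_s = V^N\ast\mu^N_s$ and \eqref{HK3}). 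The mollification error $K\ast u_s - K\ast V^N\ast u^N_s$ I would further split as $(K\ast u_s - K\ast V^N\ast u_s) + K\ast V^N\ast(u_s - u^N_s)$; the first is $O(N^{-\alpha\Hreg})$ using the Hölder regularity $\Hreg$ of $K\ast u_s$ and $\int |y| V^N(y)\,dy \lesssim N^{-\alpha}$, the second feeds back into the Gronwall loop.

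The main obstacle is the sharp estimate of the stochastic convolution $\|M^N\|_{T,\Lsol}$ in $L^m(\Omega)$. Here I would use the factorization method / a Burkholder–Davis–Gundy inequality in the Hilbert (or $L^\rK$) setting, together with heat-kernel regularization estimates: morally $\EE\|M^N_t\|_{L^\rK}^2 \lesssim \int_0^t (t-s)^{-1 - d(\frac12 - \frac1\rK)\vee 0}\,\EE\|V^N\ast(\text{something})\|^2\,ds$, and each factor $\nabla V^N$ contributes $N^{d\alpha}$ in $L^\infty$ while the $\frac1N\sum_i$ averaging and independence of the $W^i$ yield a $\frac1N$ gain; balancing $N^{\alpha\cdot(\text{power})}/N$ against the time singularity — integrable precisely under \eqref{C04} — produces the rate $N^{-\frac12(1-\alpha(d + d(1-\frac2\rK)\vee 0))+\varepsilon}$. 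Combining this with the $N^{-\alpha\Hreg}$ mollification error gives the exponent $\varrho$ in \eqref{eq:def_rho}, and a final application of Gronwall's lemma (the singular-kernel version, since the kernel $(t-s)^{-\beta}$ with $\beta<1$ appears) absorbs the feedback term $\int_0^t (t-s)^{-\beta}\|u^N_s - u_s\|_{\Lsol}\,ds$ and closes the estimate, leaving the initial-condition term $\sup_{s\le T}\|e^{s\Delta}(u^N_0-u_0)\|_{\Lsol}$ untouched as in the statement. Care is needed that all constants depend only on $T<T_{\max}$, $K$, $d$ and the uniform moment bounds in \eqref{H}, not on $N$.
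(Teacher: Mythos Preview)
Your overall architecture (mild formulation, Gronwall with a $(t-s)^{-1/2}$ kernel, separate analysis of the stochastic convolution $M^N$) matches the paper, but your mild equation for $u^N$ is wrong and this cascades. Applying It\^o to $V^N(x-X^{i,N}_\cdot)$ and averaging gives the drift $\nabla_x\cdot\langle\mu^N_s,\,V^N(x-\cdot)\,F_A(K\ast u^N_s(\cdot))\rangle$, where $F_A(K\ast u^N_s)$ is evaluated at the particle position $\cdot$, \emph{not} at $x$; it does \emph{not} collapse to $\nabla\cdot\big(u^N_s\,F_A(K\ast u^N_s)\big)(x)$. The paper isolates exactly this commutator as an error term
\[
E^{(1)}_t(x)=\int_0^t \nabla\cdot e^{(t-s)\Delta}\big\langle\mu^N_s,\,V^N(x-\cdot)\big(F_A(K\ast u^N_s(x))-F_A(K\ast u^N_s(\cdot))\big)\big\rangle\,ds,
\]
and it is \emph{this} term --- via the Lipschitz property of $F_A$, the H\"older bound \eqref{HK3} on $K\ast u^N_s$, and the compact support of $V$ --- that produces the $N^{-\alpha\Hreg}$ rate. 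Your proposed source of $N^{-\alpha\Hreg}$, namely $K\ast u_s - K\ast V^N\ast u_s$, does not appear: nothing in the problem involves $V^N\ast u_s$.

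Two further corrections. First, the cut-off does not disappear: $A\ge A_T$ guarantees $F_A(K\ast u_s)=K\ast u_s$ only for the PDE solution $u$, not for $u^N$, since we have no deterministic bound on $\|K\ast u^N_s\|_\infty$. The paper keeps $F_A$ throughout on the $u^N$ side and exploits both its boundedness (for $(u^N_s-u_s)F_A(K\ast u^N_s)$) and its Lipschitz property (for $u_s(F_A(K\ast u_s)-F_A(K\ast u^N_s))$). Second, you do not need an $L^1\!\to\!L^\rK$ heat estimate with the extra $(t-s)^{-\frac{d}{2}(1-\frac1\rK)}$ singularity: using Lemma~\ref{lem:unifbound} to bound $\|K\ast(u^N_s-u_s)\|_\infty\lesssim\|u^N_s-u_s\|_{\Lsol}$ lets the whole drift remainder sit in $L^p$ and one only uses $\|\nabla e^{t\Delta}\|_{L^p\to L^p}\lesssim t^{-1/2}$. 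The $\rK$-dependence in $\varrho$ enters solely through the $M^N$ estimate. Finally, controlling $E^{(1)}$ requires a uniform-in-$N$ bound $\sup_{N,t}\EE\|u^N_t\|_{L^\rK}^q<\infty$ (Proposition~\ref{prop:bound1} with $\beta=0$), which you should flag as a separate, nontrivial ingredient.
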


It is clear from the definition \eqref{eq:def_rho} of $\varrho$ that there is a trade-off between choosing $\alpha$ close to $\frac{1}{d}$ (as imposed by \eqref{C04}, provided $\rK\leq 2$) so as to have more physical particles; and choosing $\alpha$ smaller (such that $\alpha \Hreg = \frac{1}{2}\left(1- \alpha d \right)$, for a given $\Hreg$ and  assuming again that $\rK\leq 2$) so as to maximize the rate of convergence. The latter case could be of importance in numerical applications. We discuss the possible choices of the parameters in more details on various examples in Section \ref{sec:examples}.

 In Corollary~\ref{cor:rateEmpMeas}, we obtain the same rate for the genuine empirical measure of the system \eqref{eq:IPS0}:
\begin{equation*}
\bigg\| \sup_{t\in[0,T]}  \|\mu_{t}^N - u_{t} \|_{0} \bigg\|_{L^m(\Omega)}  \leq C  \left\|  \|u^N_0- u_0\|_{\Lsol} \right\|_{L^m(\Omega)} + C\, N^{-\varrho + \varepsilon},
\end{equation*}
where $ \|\cdot\|_{0}$ denotes the Kantorovich-Rubinstein metric (see Subsection \ref{subsec:CorRate}).

In the next result, we remove the cutoff $F_{A}$ from the drift of the particle system (i.e. we choose $F_{A} = \text{Id}$ in \eqref{eq:IPS0}, or equivalently $A=\infty$), and the convergence is now in probability.

\begin{corollary}\label{cor:wo-cutoff}
Let the same assumptions as in Theorem~\ref{th:rate2} hold. For each $N\in\N^*$, consider the solution $(X^{i,N})_{1\leq i \leq N}$ to \eqref{eq:IPS0} with $A=\infty$ (i.e. $F_{A} = \text{Id}$). We still denote by $\mu^N_{t}$ the empirical measure of this particle system and $u^N_{t} = \mu^N_{t}\ast V^N$.  
Let $\varrho$ be as in Theorem \ref{th:rate2}. 
Then for any $\varepsilon \in (0,\varrho)$, any $\eta>0$ and any $m\geq1$, there exists $C>0$ such that, for any $N\in \N^*$,
\begin{align*}
\PP \left(  \|u_{t}^N - u_{t} \|_{T,\Lsol} \geq \eta  \right) \leq \frac{C}{\eta^m} \left( \left\|  \|u^N_0- u_0\|_{\Lsol} \right\|_{L^m(\Omega)}  + C N^{-\varrho + \varepsilon}\right)^m .
\end{align*}
\end{corollary}
Note that for fixed $N\in \N^*$, the drift term in Equation \eqref{eq:IPS0} is smooth and bounded, hence the particle system with $F_{A} = \text{Id}$ has a unique strong solution. The proof of this result is presented in Subsection \ref{subsec:CorRate}.

\begin{remark}\label{rk:optimalrate}
Unlike particle systems in mean field interaction, we cannot expect here a $N^{-1/2}$ rate of convergence, as for regular interactions \cite{CattiauxGuillinMalrieu}. This is due to the short range of interaction of the particles which is of order $N^{-\alpha}$. At the macroscopic level, we also observe that the distance between a finite measure $\mu$ and its regularisation $V^N\ast \mu$ is of order $N^{-\alpha}$ too. Hence it is reasonable to expect no better than an $N^{-\alpha}$ rate of convergence. 

Assuming that the kernel $K$ is such 
that $\Hreg$ can be chosen equal or close to 1, the rate in \eqref{eq:def_rho} is  determined by the minimum between $\alpha$ and  $\frac{1}{2}\left(1-\alpha (d+ d(1-\frac{2}{\rK})\vee 0 )\right)$, so choosing $\alpha$ accordingly does lead to a rate of convergence of ``optimal'' order $N^{-\alpha}$.
\end{remark}

\begin{remark}
Finally, let us consider the application of Theorem~\ref{th:rate2} to several classes of models (the details are given in Section \ref{sec:examples}):

$\bullet$ For Coulomb-type kernels, which includes the Biot-Savart kernel in dimension $2$, the Riesz kernel with $s=d-2$ and the Keller-Segel kernel, 
the convergence happens for any $\alpha <\frac{1}{2(d-1)}$ (note that in dimension $2$, this accounts for choosing $\alpha = (\frac{1}{2})^-$); the best possible rate of convergence is $\varrho = \left(\frac{1}{2(d+1)}\right)^-$ for the choice $\alpha = \left(\frac{1}{2(d+1)}\right)^+$.

$\bullet$ The $2d$ Keller-Segel model (see kernel \eqref{eq:KS-kernel}). We recall that the PDE has a global solution whenever the critical parameter $\chi$ satisfies $\chi<8\pi$, and explodes in finite time otherwise (see~\cite{Nagai11}). 
In Theorem~\ref{th:rate2}, we get a rate for any value of $\chi$  which is almost $\frac{1}{2(d+1)}$. This result holds even if the PDE explodes in finite time ($\chi >8\pi$). In that case, one works on $[0,T]$ for any $T<T_{\text{max}}$. 

$\bullet$ The Riesz kernels with $s>d-2$ do not satisfy Assumption \eqref{HK3}, see Section \ref{sec:examples}. However, by imposing more regularity on the initial conditions and smaller values of $\alpha$ ($\alpha<\frac{1}{2d}$), we present a rate of convergence for singular Riesz kernels with $s\in (d-2,d-1)$, see Theorem \ref{th:rate2sing}.
\end{remark}

\paragraph{Related works.} 
The general question of quantifying the convergence of interacting particle systems (whether in mean field or moderate interaction) towards the PDE in non-singular framework has been addressed thoroughly in the literature. See for example \cite{Sznit, Oelschlager87,JourdainMeleard} or, more recently, \citet{Font} in the case of  homogeneous Boltzmann  equation for  Maxwell  molecules. 

However, in the singular case there are fewer results in the literature:  when the particle system is somehow regularised (in a wider sense than \eqref{eq:IPS0}, i.e. with moderate interactions or a more general cut-off in the kernel), 
 \citet{MeleardNS2d} obtained a rate on the density of one particle for the $2d$ Navier-Stokes equation, and \citet{BossyTalay} got a rate for Burger's equation. \\
   When the kernel is not regularised, let us mention that  \citet{FournierMischler} obtained a rate for the empirical measure of the so-called Nanbu particle system approaching the Boltzmann equation, and
   \citet{FournierHauray} approximated the Landau equation with moderately soft potentials. 
   Working at the level of the Liouville equations associated to the mean field particle system, \citet{JabinWang} obtained a quantitative convergence  with a $N^{-1/2}$ rate for some kernels including the Biot-Savart kernel. Recently, \citet{BJW2020} also proved a rate of convergence between the entropy solutions of the Liouville equations and the PDE for the tricky case of the $2d$ Keller-Segel model. 
 Let us finally mention the related problem of singular non-diffusive systems (i.e. with deterministic particles), which has seen significant progress recently with the work of \citet{Serfaty}, who introduced the modulated energy method (see also  \cite{Duerinckx} and \cite{Rose3}).
 
The present work is inspired by the new semigroup approach developed by \citet{FlandoliLeimbachOlivera}, which allows one to approximate nonlinear PDEs by smoothed empirical measures in strong functional topologies. More precisely, 
  the convergence of the mollified empirical measure of the moderately interacting particle system is obtained, and the approach was initially proposed for the FKPP equations. 
 It has already found many applications:   see \citet{FlandoliLeocata} for a PDE-ODE system related to aggregation phenomena;  \citet{Simon} for non-local  conservation laws;  \citet{FlandoliOliveraSimon} for the $2d$ Navier-Stokes equation; and \citet{ORT} for the parabolic-elliptic Keller-Segel systems.  However, in these recent works this convergence was not quantified and  the propagation of chaos was not considered.

\subsection{Second main result: Propagation of chaos}\label{subsec:MainResult2}

We now tackle the question of propagation of chaos of \eqref{eq:IPS0} towards \eqref{eq:McKeanVlasov}.

To ensure that \eqref{eq:McKeanVlasov} admits a unique weak solution, we will solve the associated nonlinear martingale problem. By classical arguments, one can then pass from a solution to this martingale problem to the existence of a weak solution to \eqref{eq:McKeanVlasov}. Hence, consider the following nonlinear martingale problem related to \eqref{eq:McKeanVlasov}:
\begin{definition}
\label{def:MP2-cutoff}
Given $K$ satisfying \eqref{HK1}-\eqref{HK2}, $u_0 \in \Lsol$ with $\rK \geq \max (\pK', \qK')$ and $T>0$, consider the canonical space $\mathcal{C}([0,T];\R^d)$ equipped with its canonical filtration. Let $\Q$ be a probability measure on the canonical space and denote by $\Q_t$ its one-dimensional time marginals. We say that $\Q$ solves the nonlinear 
martingale problem (\customlabel{MP}{$\mathcal{MP}$}) if:
\begin{enumerate}[label=(\roman*)]
\item $\Q_0 = u_0$;
\item For any $t\in (0,T]$, $\Q_t$ has a density $q_t$ w.r.t. Lebesgue measure on $\R^d$. In addition, it satisfies $q\in \Ccal([0,T];\Lsol)$;
\item For any $f \in \Ccal_{c}^2(\R^d)$, the process $(M_t)_{t \in[0, T]}$ 
defined as
$$M_t:=f(w_t)-f(w_0)-\int_0^t \Big[ \Delta f(w_s)+  \nabla f(w_s)\cdot (K \ast q_s (w_s))\Big] ds$$
is a $\Q$-martingale, where $(w_t)_{t \in[0, T]}$ denotes the canonical process.  
\end{enumerate}
\end{definition}

We will see that for $f\in \Lsol$, $K\ast f$ is bounded, hence $(M_t)_{t\geq 0}$ is well-defined. The following claim establishes the well-posedness of the martingale problem and is proven in Section~\ref{subsec:proofNMP}:
\begin{proposition}\label{prop:wpMP}
Let $T< T_{max}$. 
Then, the martingale problem \eqref{MP} admits a unique solution in the sense of Definition \ref{def:MP2-cutoff}. 
\end{proposition}

This result comes from the combination of the fact that the marginal laws of the process are uniquely determined as solutions of \eqref{eq:PDE} with the fact that the linear version of \eqref{eq:McKeanVlasov} admits a unique weak solution. 
  We choose this approach as we have \emph{a priori} information about the PDE.

By Proposition \ref{prop:wpMP}, McKean-Vlasov Equation \eqref{eq:McKeanVlasov} admits a unique weak solution, so let us consider $N$ independent copies $(\widetilde{X}^i)_{i\in\{1,\dots,N\}}$  and denote by $(W^i)_{i\in \{1,\dots,N\}}$ the associated Brownian motions. Consider the particles $(X^i)_{i\in\{1,\dots,N\}}$ defined as the (strong) solutions of \eqref{eq:IPS} driven by the same Brownian motions $(W^i)_{i\in \{1,\dots,N\}}$ and with initial conditions $(\widetilde{X}^i_{0})_{i\in\{1,\dots,N\}}$. 
 A simple consequence of Theorem \ref{th:rate2} is that the particles $(X^i)_{i\in\{1,\dots,N\}}$ (with mollified interaction kernel and cut-off $F_{A}$) are uniformly close to the non-regularised McKean-Vlasov particles.  More precisely,
\begin{equation}\label{eq:gapMV}
 \left\| \max_{i\in \{1,\dots, N\}} \sup_{t\in [0,T]} |X^{i,N}_{t} - \widetilde{X}^{i}_{t}| \right\|_{L^m(\Omega)} \leq C \,  N^{-\varrho+\varepsilon}, \quad \forall N\in \N^*.
\end{equation}

~

~

Now, we present our second main result. It will be proven in Section \ref{subsec:proofPropChaos}.
\begin{theorem}
\label{th:propagation-of-chaos}
Let the hypotheses of Theorem \ref{th:rate2} hold. In particular, recall that ${u\in \mathcal{C}([0,T],\Lsol)}$. Assume further that the family of random variables $\{X^i_{0},~i\in\N\}$ is identically distributed and that $\langle u^N_0, \varphi\rangle  \to \langle u_{0}, \varphi\rangle $ in probability, for any $\varphi \in \mathcal{C}_{b}(\R^d)$. Then, the empirical measure $\mu^N_.$ (defined in \eqref{eq:empiricalMeasure} as a probability measure on $\mathcal{C}([0,T];\R^d)$)  converges in probability towards $\Q$, which is the law of the  unique weak solution of \eqref{eq:McKeanVlasov}. 
\end{theorem}

We emphasize here that without the convergence of $u^N$ in the convenient functional framework, it would not be possible to obtain the propagation of chaos in this singular setting. Hence, the result of Theorem \ref{th:rate2} is very much related to the propagation of chaos and should be considered as the most important ingredient when proving  Theorem \ref{th:propagation-of-chaos}.

\begin{remark}
 As in the previous section, the case of the singular Riesz kernels is not covered by Proposition \ref{prop:wpMP} and Theorem \ref{th:propagation-of-chaos}. However, we are able to give very similar statements of well-posedness of the McKean-Vlasov equations for Riesz kernels with $s\in (d-2,d-1)$ in Section~\ref{subsec:chaossing}, provided that the initial conditions are smoother and $\alpha<\frac{1}{2d}$.
\end{remark}

\paragraph{Related works.} Recently, the well-posedness of McKean-Vlasov SDEs (or distribution-dependent SDEs) has gained much attention in the literature (see e.g.  \cite{Chaudru,deraynal2019, BauerEtAl,lacker2018, mishura2020, rockner2019} and the references therein). The authors analyse well-posedness when the diffusion coefficient is also distribution-dependent and when the dependence on the law is not necessarily as in~\eqref{eq:McKeanVlasov}. The main difficulties there are to treat coefficients that may not be continuous (in the measure variable) w.r.t the Wasserstein distance and eventually to treat singular drift coefficients (in the space variable). As both of these difficulties appear in  the specific distribution dependence in  \eqref{eq:McKeanVlasov}, our well-posedness result gives a new perspective on the matter as in the above works  one cannot treat singular interactions in the drift term with kernels $K$ of the order $\frac{1}{|x|^{s+1}}$ for $s\in(0,d-1)$, in any dimension.

Let us focus on the Keller-Segel model, for which the kernel is given in \eqref{eq:KS-kernel}. 
In  \citet{FournierJourdain}, the well-posedness of the associated McKean-Vlasov SDE for a value of the sensitivity parameter $\chi<2\pi$ in dimension $2$ is proven. The authors also proved tightness and consistency result for the associated particle system (one cannot properly speak about propagation of chaos as the PDE might not have a unique solution in their functional framework). Our result provides, assuming more regularity on the initial condition, global (in time) well-posedness of the McKean-Vlasov SDE whenever the PDE has a global solution, and local well-posedness whenever the solution of the PDE exhibits a blow-up. 
In particular in dimension $2$, we get the global well-posedness of \eqref{eq:McKeanVlasov} whenever $\chi<8\pi$, and local well-posedness of \eqref{eq:McKeanVlasov} when $\chi \geq 8\pi$. In both cases, we obtain the propagation of chaos for the moderately interacting particle system.

\subsection{Organisation of the paper}\label{subsec:orga}
 
In Section~\ref{sec:proofRate2}, we prove Theorem \ref{th:rate2} and its corollaries. In Section \ref{sec:chaos}, the existence and uniqueness of the solution of the
martingale problem \eqref{MP} is proven, 
as well as the propagation of chaos for the empirical measure of \eqref{eq:IPS0} 
(Theorem \ref{th:propagation-of-chaos}). In Section~\ref{sec:sing} we extend the results of Sections~\ref{sec:proofRate2} and~\ref{sec:chaos}.
Finally, we present some examples and applications of our results in Section~\ref{sec:examples}. In the Appendix,  the existence and uniqueness of the Fokker-Planck equation \eqref{eq:PDE} and its cut-off version are studied in Section \ref{sec:PDE}. Then, one may find the time and space estimates for some stochastic convolution integrals in Section \ref{app:martingale}, and the proof of a result about the boundedness of the mollified empirical measure in $L^2 \left([0,T]; H^\beta_{\rK}(\R^d)\right)$ in Section \ref{app:tightness}.

\paragraph{Acknowledgment.} The authors are very grateful to an anonymous referee for his careful reading of the manuscript, and in particular for suggesting us the result concerning the particle system without cutoff (Corollary \ref{cor:wo-cutoff}).\\
C.O. is partially supported by CNPq through the grant 426747/2018-6 and 
FAPESP by the grants 2020/15691-2 and 2020/04426-6. C.O. and A.R. acknowledge the support of the Math-Amsud program, through the grant 19-MATH-06. M.T. acknowledges the support of FMJH.

\section{Rate of convergence}\label{sec:proofRate2}

In this section, we prove our first main result (Theorem \ref{th:rate2}). More precisely, in Subsection \ref{subsec:ItogN} we establish the equation satisfied by $u^N$ and present the boundedness result for it that is proved in the appendix.
 Then in Subsection \ref{subsec:proofRate2}, we prove Theorem \ref{th:rate2}. In Subsection \ref{subsec:CorRate}, we present the corollaries of Theorem \ref{th:rate2}.

First, we give a  preliminary lemma that is essential for our calculations and for the analysis of the mild form of \eqref{eq:PDE} and the mild form of the cut-off PDE \eqref{eq:cutoffPDE}.
\begin{lemma}\label{lem:unifbound}
Let $K$ be satisfying Assumptions \eqref{HK1} and \eqref{HK2} and recall that $\rK \geq \max (\pK', \qK')$. There exists $C_{K,d}>0$ (which depends on $K$ and $d$ only) such that for any  $f \in \Lsol$, 
\begin{equation*}
 \|K \ast f\|_{L^\infty(\R^d)} \leq C_{K,d} \, \|f\|_{\Lsol}.
\end{equation*}
\end{lemma}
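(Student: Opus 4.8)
The plan is to split the kernel into its singular part near the origin and its tail, controlled respectively by Assumptions \eqref{HK1} and \eqref{HK2}, and to estimate the convolution on each piece by Hölder's inequality. Write $K = K\mathbbm{1}_{\mathcal{B}_1} + K\mathbbm{1}_{\mathcal{B}_1^c}$, so that for any $x\in\R^d$,
\begin{align*}
|(K\ast f)(x)| \leq \int_{\mathcal{B}_1} |K(y)|\,|f(x-y)|\,dy + \int_{\mathcal{B}_1^c} |K(y)|\,|f(x-y)|\,dy.
\end{align*}
For the first integral, apply Hölder with the conjugate pair $(\pK,\pK')$: it is bounded by $\|K\|_{L^{\pK}(\mathcal{B}_1)}\,\|f\|_{L^{\pK'}(\R^d)}$. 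For the second, apply Hölder with $(\qK,\qK')$: it is bounded by $\|K\|_{L^{\qK}(\mathcal{B}_1^c)}\,\|f\|_{L^{\qK'}(\R^d)}$. By \eqref{HK1}--\eqref{HK2} both kernel norms are finite, so setting $C_{K,d} := \|K\|_{L^{\pK}(\mathcal{B}_1)} + \|K\|_{L^{\qK}(\mathcal{B}_1^c)}$ gives a bound depending only on $K$ and $d$.

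It remains to dominate $\|f\|_{L^{\pK'}(\R^d)}$ and $\|f\|_{L^{\qK'}(\R^d)}$ by $\|f\|_{\Lsol} = \|f\|_{L^1(\R^d)} + \|f\|_{L^{\rK}(\R^d)}$. Since $\pK\in[1,+\infty]$, its conjugate $\pK'$ lies in $[1,+\infty]$, and by \eqref{HK3} we have $\rK\geq\max(\pK',\qK')\geq\pK'$; thus $\pK'\in[1,\rK]$ and by interpolation (or simply by splitting $\R^d$ according to whether $|f|\leq 1$ and using $L^1$--$L^{\rK}$ interpolation) $\|f\|_{L^{\pK'}}\leq \|f\|_{L^1}^{1-\theta}\|f\|_{L^{\rK}}^{\theta}\leq \|f\|_{\Lsol}$ for the appropriate $\theta\in[0,1]$. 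The same argument applies to $\|f\|_{L^{\qK'}}$ since $\qK'\leq\rK$ as well. Adjusting $C_{K,d}$ by the (dimension-dependent) interpolation constants yields the claim.

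There is no serious obstacle here; the only point requiring a little care is to notice that $\rK\geq\max(\pK',\qK')$ — which is exactly what \eqref{HK3} provides — so that both conjugate exponents $\pK'$ and $\qK'$ fall in the interpolation range $[1,\rK]$ between the two exponents appearing in the norm $\|\cdot\|_{\Lsol}$. One should also handle the endpoint cases $\pK=1$ (so $\pK'=\infty$, forcing $\rK=\infty$ and $\|f\|_{L^\infty}\leq\|f\|_{\Lsolsing}$) and $\pK=\infty$ (so $\pK'=1$, trivial) consistently, but these are covered by the same interpolation bound.
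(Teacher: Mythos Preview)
Your proof is correct and follows essentially the same approach as the paper: split $K$ into its restrictions to $\mathcal{B}_1$ and $\mathcal{B}_1^c$, apply H\"older's inequality on each piece, and then interpolate $\|f\|_{L^{\pK'}}$ and $\|f\|_{L^{\qK'}}$ between $L^1$ and $L^{\rK}$ using $\rK\geq\max(\pK',\qK')$. You are in fact more explicit than the paper about the interpolation step and the endpoint cases.
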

\begin{proof}
Recall that $\qK'$ denotes the conjugate exponent of the paremeter $\qK$ from \eqref{HK2}. In view of Assumptions \eqref{HK1} and \eqref{HK2},  H\"older's inequality yields
\begin{align*}
|K\ast f(x)| &\leq \int_{\mathcal{B}_{1}} |K(y)|  |f(x-y)| ~dy  +  \int_{\mathcal{B}_{1}^c} |K(y)|  |f(x-y)| ~dy \nonumber\\
&\leq \|K\|_{L^\pK(\mathcal{B}_{1})} \|f\|_{L^{\pK'}(\R^d)} + \|K\|_{L^\qK(\mathcal{B}_{1}^c)} \|f\|_{L^{\qK'}(\R^d)}. 
\end{align*}
The conclusion follows from the interpolation inequality $ \|f\|_{L^{\qK'}(\R^d)} \lesssim \|f\|_{\Lsol}$.
\end{proof}

~

We recall here some classical properties of the heat kernel that will be used throughout this section.
Applying the  convolution inequality \cite[Th. 4.15]{Brezis} for $p\geq 1$ and using the equality \newline
$\big\Vert \nabla \frac
{1}{   (4\pi t)^{\frac{d}{2}}  }e^{- \frac{\vert \cdot\vert ^{2}}{4t}}\big\Vert _{L^{1}(\R^d)}=\frac{C}{\sqrt{t}}$, it comes that
\begin{equation}
\label{eq:heat-op-norm}
\big\Vert \nabla e^{ t \Delta}\big\Vert _{L^{p}
\rightarrow L^{p}  }\leq \frac{C}{\sqrt{t}}.
\end{equation}
By explicit computations in the Fourier space, we get that 
$$\|g_{2t} \|_{\beta,1} = \big\Vert (I-\Delta)^\frac{\beta}{2} \frac
{1}{   (4\pi t)^{\frac{d}{2}}  }e^{- \frac{\vert \cdot\vert ^{2}}{4t}}\big\Vert _{L^{1}(\R^d)}\leq C\, t^{-\frac{\beta}{2}}, $$
 hence the inequality \eqref{eq:heat-op-norm} extends to 
\begin{equation}
\label{eq:heat-op-norm-frac}
\big\Vert (I-\Delta)^{\frac{\beta}{2}} e^{ t \Delta}\big\Vert _{L^{p}
\rightarrow L^{p}  }\leq C\, t^{-\frac{\beta}{2}}.
\end{equation}

\subsection{Properties of the regularised empirical measure}\label{subsec:ItogN}

The proof of the rate of convergence relies on the following mild formulation of the mollified empirical measure:
\begin{equation}
\label{eq:mildeq}
\begin{split}
u^N_t(x)=e^{t\Delta }u^N_0(x) -  \int_0^t \nabla \cdot e^{(t-s)\Delta } \langle \mu_s^N,  V^N ( x-\cdot) F\big(  K\ast u^N_s (\cdot)\big) \rangle \ ds \\
- \frac{1}{N} \sum_{i=1}^N \int_0^t  e^{(t-s)\Delta} \nabla V^N (x-X_s^{i,N})\cdot dW^i_s, 
\end{split}
\end{equation}
and the following property: For $q \geq 1$, one has 
\begin{equation}\label{eq:boundednessuN}
\sup_{N\in\N^*} \mathbb{E}\left[ \sup_{t\in[0,T]} \left\Vert  u^N_{t} \right\Vert _{L^\rK(\R^d)}^{q}\right]  <\infty.
\end{equation}
This estimate will be proven in Proposition \ref{prop:bound1} (that we apply here for $\beta=0$).

Now, let us establish Equation \eqref{eq:mildeq}. 
Consider the mollified empirical measure 
\[u^N_t= V^{N} \ast \mu_t^{N}: x \in \R^d \mapsto \int_{\R^d} V^N(x-y) d\mu_t^N(y)=\frac{1}{N}\sum_{k=1}^{N} V^N(x-X_{t}^{k,N}).\] 
Using this definition, we rewrite the particle system in  \eqref{eq:IPS} as
\begin{equation*}
dX_{t}^{i,N}=  F\big(  K\ast u^N_t (X_{t}^{i,N})\big)\; dt + \sqrt{2} \; dW_{t}^{i}, \quad t\in[0, T], \quad 1\leq i \leq N. \label{PS2}
\end{equation*}
Fix $x\in \R^d$ and $1\leq i\leq N$. Apply It\^o's formula to the function $V^N(x-\cdot)$ and the particle $X^{i,N}$. Then, sum for all $1\leq i\leq N$ and divide by $N$. It comes
\begin{equation}
\label{eq:almost-mild}
\begin{split}
u^N_t(x)=&u^N_0(x) - \frac{1}{N} \sum_{i=1}^N \int_0^t \nabla V^N (x- X_s^{i,N}) \cdot F\big(  K\ast u^N_s (X_{s}^{i,N})\big)\ ds \\
&- \frac{1}{N} \sum_{i=1}^N \int_0^t \nabla V^N (x- X_s^{i,N})\cdot dW^i_s +  \frac{1}{N} \sum_{i=1}^N \int_0^t \Delta V^N (x- X_s^{i,N}) \ ds.
\end{split}
\end{equation}
Notice that
$$ \frac{1}{N} \sum_{i=1}^N \int_0^t \nabla V^N (x- X_s^{i,N}) \cdot F\big(  K\ast u^N_s (X_{s}^{i,N})\big)\ ds =\int_0^t \langle \mu_s^N,  \nabla V^N (x- \cdot) \cdot F\big(  K\ast u^N_s (\cdot)\big) \rangle \ ds$$
and 
$$ \frac{1}{N} \sum_{i=1}^N \int_0^t \Delta V^N (x- X_s^{i,N}) \ ds= \int_0^t \Delta u^N_s(x)\ ds.$$
The preceding equalities combined with \eqref{eq:almost-mild} lead to 
\begin{equation}
\label{eq:almost-mild2}
\begin{split}
u^N_t(x) &= u^N_0(x) - \int_0^t \langle \mu_s^N, \nabla V^N (x-\cdot) \cdot F\big(  K\ast u^N_s (\cdot)\big) \rangle \ ds \\
&\quad - \frac{1}{N} \sum_{i=1}^N \int_0^t  \nabla V^N (x-X_s^{i,N})\cdot dW^i_s + \int_0^t \Delta u^N_s(x)\ ds.
\end{split}
\end{equation}

The following mild form is immediately derived:
\begin{equation}
\label{eq:mild}
\begin{split}
u^N_t(x)=e^{t\Delta }u^N_0(x) - \int_0^t e^{(t-s)\Delta } \langle \mu_s^N, \nabla V^N (x-\cdot) \cdot F\big(  K\ast u^N_s(\cdot)\big) \rangle \ ds \\- \frac{1}{N} \sum_{i=1}^N \int_0^t  e^{(t-s)\Delta} \nabla V^N (x-X_s^{i,N})\cdot dW^i_s.
\end{split}
\end{equation}
Finally, developing the scalar product, one has 
$$ \langle \mu_s^N, \nabla V^N (x-\cdot) \cdot F\big(  K\ast u^N_s(\cdot)\big) \rangle=  \nabla_x \cdot  \langle \mu_s^N, V^N (x-\cdot) F\big(  K\ast u^N_s(\cdot)\big) \rangle. $$
Combining the latter with the fact that $e^{t\Delta } \nabla \cdot f=\nabla \cdot e^{t\Delta} f$, we deduce the mild form \eqref{eq:mildeq} from \eqref{eq:mild}.

\subsection{Proof of Theorem \ref{th:rate2}}\label{subsec:proofRate2}

Let $u$ the unique mild solution to \eqref{eq:PDE} on $[0, T]$. By assumption $A$ is large enough, so  $u$ also satisfies Equation~\eqref{eq:mildKS-cutoff}.

In view of the mild formulas \eqref{eq:mildeq} for $u^N$ and \eqref{eq:mildKS-cutoff} for $u$, it comes
\begin{align*}
u^N_{t}(x) - u_{t}(x) &= e^{t\Delta} (u^N_{0} - u_{0})(x) - \int_{0}^t \nabla \cdot e^{(t-s)\Delta } \left( \langle \mu_s^N,  V^N (x-\cdot) F\big(  K\ast u^N_s (\cdot)\big) \rangle - u_{s}  F(K\ast u_s) (x) \right) \, ds\\
&\quad - \frac{1}{N} \sum_{i=1}^N \int_0^t  e^{(t-s)\Delta} \nabla V^N (x-X_s^{i,N})\cdot dW^i_s  .
\end{align*}
Now we subtract and add the term $\int_{0}^t \nabla\cdot e^{(t-s)\Delta}   \langle \mu^N_{s},  V^N (x-\cdot) F\big( K\ast u^N_s(x)\big)\rangle \ ds$, which is also equal to $\int_{0}^t \nabla\cdot e^{(t-s)\Delta} \,  u^N_{s}(x)\, F\big( K\ast u^N_s(x)\big) \ ds$. Hence we get
\begin{align*}
u^N_{t}(x) - u_{t}(x) &= e^{t\Delta} (u^N_{0} - u_{0})(x) +  \int_0^t  \nabla \cdot e^{(t-s)\Delta } \left(u_{s}  F(K\ast u_s) - u^{N}_{s}  F(K\ast u^{N}_s)\right)(x) ~ ds \\
&\quad + E_{t}(x) - M^N_{t}(x), 
\end{align*}
where we have set
\begin{align}\label{eq:defM}
E_{t}(x)&:= \int_0^t \nabla \cdot e^{(t-s)\Delta } \langle \mu^N_{s},  V^N (x-\cdot) \left( F\big( K\ast u^N_s(x)\big)-F\big( K\ast u^N_s(\cdot)\big)\right)\rangle \ ds,\nonumber\\
M^N_{t}(x) &:=\frac{1}{N} \sum_{i=1}^N \int_0^t  e^{(t-s)\Delta} \nabla V^N (x-X_s^{i,N})\cdot dW^i_s. 
\end{align}

For $p\in\{1, \rK\}$, in view of the estimate \eqref{eq:heat-op-norm}, one has
\begin{equation*}
\begin{split}
\| u^N_t-u_{t} \|_{L^{p}(\R^{d})} &\leq \|e^{t\Delta }(u^N_0- u_0 )\|_{L^{p}(\R^{d})} \\
&\quad + C \int_0^t \frac{1}{\sqrt{t-s}} \|(u_{s}  F(K\ast u_s) - u^{N}_{s}  F(K\ast u^{N}_s) ) \|_{L^{p}(\R^{d})} ds\\
&\quad + \| E_{t}\|_{L^{p}(\R^{d})}  + \|  M^{{N}}_{t}\|_{L^{p}(\R^{d})}. 
\end{split}
\end{equation*}
We will use the bound $\|e^{t\Delta }(u^N_0- u_0 )\|_{L^{p}(\R^{d})}\leq \|u^N_0- u_0\|_{L^{p}(\R^{d})}$. 
Observe also that
$$\|(u_{s}  F(K\ast u_s) - u^{N}_{s}  F(K\ast u^{N}_s) ) \|_{L^{p}(\R^{d})} \leq \|(u_{s} ( F(K\ast u_s) - F(K\ast u^{N}_s) ) \|_{L^{p}(\R^{d})} + \|( F(K\ast u_s^N)(u_{s}  - u^{N}_{s} ) ) \|_{L^{p}(\R^{d})}.$$
Hence, by the Lipschitz property of $F$ on one term and the boundedness of $F$ for the other,  it follows that 
\begin{equation*}
\begin{split}
\| u^N_t-u_{t} \|_{L^{p}(\R^{d})} &\leq \|u^N_0- u_0\|_{L^{p}(\R^{d})} + C 
\int_0^t \frac{1}{\sqrt{t-s}} \|F\|_{L^\infty(\R^d)} \| u^{N}_{s}-u_{s} \|_{L^{p}(\R^{d})} ds\\
&\quad +  C \int_{0}^t \frac{1}{\sqrt{t-s}} \|u_{s}\|_{L^p} \|F\|_{\text{Lip}} \|K\ast (u_s - u^{N}_{s}) \|_{L^{\infty}(\R^{d})} ds  \\
&\quad + \| E_{t}\|_{L^{p}(\R^{d})}  + \|  M^{{N}}_{t}\|_{L^{p}(\R^{d})} .
\end{split}
\end{equation*}
Besides, by  Proposition \ref{regularity} for $\beta= 0$, it comes that for some $C>0$ which depends on $\|u\|_{T,L^1\cap L^\rK(\R^d)}$, $\|F\|_{L^\infty(\R^d)}$ and $\|F\|_{\text{Lip}}$,
\begin{equation*}
\begin{split}
\| u^N_t-u_{t} \|_{L^{p}(\R^{d})} &\leq \| u^N_0- u_0 \|_{L^{p}(\R^{d})} + C \int_0^t \frac{1}{\sqrt{t-s}}  \| u^{N}_{s}-u_{s} \|_{L^{p}(\R^{d})} ds\\
&\quad +  C \int_{0}^t \frac{1}{\sqrt{t-s}} \|K\ast (u_s - u^{N}_{s}) \|_{L^{\infty}(\R^{d})} ds  \\
&\quad + \| E_{t}\|_{L^{p}(\R^{d})}  + \|  M^{{N}}_{t}\|_{L^{p}(\R^{d})} .
\end{split}
\end{equation*}
Finally we apply Lemma \ref{lem:unifbound} and obtain
\begin{equation}\label{lp}
\begin{split}
\| u^N_t-u_{t} \|_{L^{p}(\R^{d})} &\leq \| u^N_0- u_0 \|_{L^{p}(\R^{d})} + C
 \int_0^t \frac{1}{\sqrt{t-s}}  \| u^{N}_{s}-u_{s} \|_{L^{p}(\R^{d})} ds\\
&\quad +  C \int_{0}^t \frac{1}{\sqrt{t-s}} \|u_s - u^{N}_{s} \|_{\Lsol} ds  \\
&\quad + \| E_{t}\|_{L^{p}(\R^{d})}  + \|  M^{{N}}_{t}\|_{L^{p}(\R^{d})} .
\end{split}
\end{equation}

Therefore, considering \eqref{lp} for both $p=1$ and $p=\rK$, we deduce that
\begin{equation*}
\begin{split}
\| u^N_t-u_{t} \|_{\Lsol} &\leq \|u^N_0- u_0 \|_{\Lsol} + C \int_0^t \frac{1}{\sqrt{t-s}}  \| u^{N}_{s}-u_{s} \|_{\Lsol} ds\\
&\quad + \| E_{t}\|_{\Lsol}  + \|  M^{{N}}_{t}\|_{\Lsol} .
\end{split}
\end{equation*}
Using the Gr\"onwall lemma for convolution integrals (see e.g. \cite[Lemma 7.1.1]{Henry}), we obtain
\begin{equation}\label{ineu}
\| u^N-u \|_{t,\Lsol} \leq C \left(\|u^N_0- u_0 \|_{\Lsol} + \| E\|_{t,\Lsol}  + \|  M^{{N}}\|_{t,\Lsol} \right) .
\end{equation}

$\bullet$ \emph{Now, we estimate the moments of $\| E\|_{t, L^{q}(\R^{d}) }$ for $1\leq q\leq \rK$}. Using \eqref{eq:heat-op-norm} for $p=q$, we observe that 
\begin{multline*}
\| E_{t}\|_{L^{q}(\R^{d}) } \leq C 
\int_0^{t}   \frac{1}{(t-s)^{1/2}} \left(\int_{\R^{d}} \left|\langle \mu^N_{s},  V^N (x-\cdot) (F\big( K\ast u^N_s(\cdot)\big)- F\big( K\ast u^N_s(x)\big))\rangle \right|^{q} dx\right)^{\frac{1}{q}} \ ds.
\end{multline*}
By using the positivity of $V^N$, we get
\begin{align*}
\| E_{t}\|_{L^{q}(\R^{d}) } 
&\leq C 
\int_0^{t}   \frac{1}{(t-s)^{\frac{1}{2}}} \left(\int_{\R^{d}} \langle \mu^N_{s},  V^N (x-\cdot) \left|F\big( K\ast u^N_s(\cdot)\big)- F\big( K\ast u^N_s(x)\big)\right|\rangle^{q} \, dx\right)^{\frac{1}{q}} \, ds.
\end{align*}
Using the Lipschitz regularity of $F$ and the $\Hreg$-H\"older continuity of $K\ast u^N$ from Assumption \eqref{HK3}, we get
\begin{align*}
\| E_{t}\|_{L^{q}(\R^{d}) } &\leq C 
\int_0^{t}   \frac{\| u_s^{N}\|_{\Lsol}}{(t-s)^{\frac{1}{2}}} \left(\int_{\R^{d}} \langle \mu^N_{s},  V^N (x-\cdot) \left|\cdot-x\right|^{\Hreg}\rangle^{q} \, dx\right)^{\frac{1}{q}} \, ds.
\end{align*}
Since $V$ is compactly supported (without loss of generality, assume that the support of $V$ is included in the unit ball), we have that $V^N (x-y) \left|y-x\right|^{\Hreg} \leq N^{-\alpha\Hreg} V^N (x-y)$. Thus
\begin{align*}
\| E_{t}\|_{L^{q}(\R^{d}) }
&\leq  \frac{C}{N^{\alpha\Hreg}} 
\int_0^{t}  \frac{1}{(t-s)^{\frac{1}{2}}} \| u_s^{N}\|_{\Lsol} \| u_s^{N}\|_{L^{q}(\R^{d})} \, ds \\
&\leq \frac{C}{N^{\alpha\Hreg}} 
\int_0^{t}  \frac{1}{(t-s)^{\frac{1}{2}}} \| u_s^{N}\|_{\Lsol}^2 \, ds,
\end{align*}
where the last inequality holds by interpolation.
Apply H\"older's inequality with $p=\frac{3}{2}$ to obtain
\begin{align*}
\| E_{t}\|_{L^{q}(\R^{d}) } \leq  \frac{C}{N^{\alpha\Hreg}} \left(\int_{0}^t (t-s)^{-\frac{3}{4}}\, ds\right)^{\frac{2}{3}}  \left(\int_{0}^t  \| u_s^{N}\|_{\Lsol}^6\, ds\right)^\frac{1}{3} .
\end{align*}
Finally, we have from Jensen's inequality that for $m\geq 3$,
\begin{align*}
\left\| \| E\|_{t,L^{q}(\R^{d})} \right\|_{L^m(\Omega)} \leq \frac{C}{N^{\alpha\Hreg}} \left(\int_{0}^t \EE\left[ \| u_s^{N}\|_{\Lsol}^{2m} \right] \, ds\right)^{\frac{1}{m}}
\end{align*}
and the bound \eqref{eq:boundednessuN} permits to conclude that
\begin{align}\label{eq:errorL1}
\left\| \| E\|_{t, L^{q}(\R^{d})} \right\|_{L^m(\Omega)} \leq   \frac{C}{N^{\alpha\Hreg}} .
\end{align} 
This inequality immediately extends to $1\leq m<3$.

~

$\bullet$ \emph{We turn to the moments of $\|M^N_{t}\|_{L^{1}(\R^d)}$.} Note that $ \left(\|M^N_{s}\|_{L^{1}(\R^d)}\right)_{s\geq 0}$ is not a martingale, but a stochastic convolution integral. We explain how to deal with this term in Appendix~\ref{app:martingale}.
By Proposition \ref{prop:martingale-bound}, we have: for any $\varepsilon>0$ arbitrary small, there exists $C>0$ such that
\begin{equation}\label{Eq}
\left\| \sup_{s\in [0,t] } \|M^N_{s}\|_{L^{1}(\R^{d})} \right\|_{L^m(\Omega)}  \leq C\,  N^{-\frac{1}{2}\left(1-\alpha d\right) + \varepsilon} , \quad \forall N\in\N^* .
\end{equation}

~

$\bullet$ \emph{It remains to estimate the moments of $\| M_{t}^N\|_{L^\rK(\R^d)}$.} By Proposition \ref{prop:martingale-bound} for $p=\rK$, we have: for any $\varepsilon>0$, there exists $C>0$ such that
\begin{equation}\label{Einf}
\left\| \sup_{s\in[0,t]} \|M^N_{s}\|_{L^{\rK}(\R^{d})} \right\|_{L^m(\Omega)}  \leq C\;  N^{- \frac{1}{2}\left(1-\alpha (d+ \varkappa_{\rK}) \right) + \varepsilon} ,
\end{equation}
where $\varkappa_{\rK} =  d(1-\tfrac{2}{\rK})\vee 0$. By the assumption \eqref{C04}, the above exponent in $N$ is indeed negative.

~

$\bullet$ \emph{Conclusion}. ~ 
Plugging Inequalities \eqref{eq:errorL1}-\eqref{Einf} 
in \eqref{ineu}, we conclude that 
for any $\varepsilon>0$ small enough, there exists $C>0$ such that for any $N\in\N^*$,
\begin{align*}
\left\| \| u^N-u \|_{t,\Lsol} \right\|_{L^m(\Omega)}
&\leq C \left( \left\|  \|u^N_0- u_0\|_{\Lsol} \right\|_{L^m(\Omega)}   +  N^{-\alpha \Hreg}   +  N^{- \frac{1}{2}\left(1-\alpha (d+ \varkappa_{\rK}) \right) + \varepsilon} \right). 
\end{align*}

\subsection{Corollaries of Theorem \ref{th:rate2}}\label{subsec:CorRate}
In view of the previous result, we obtain a rate of convergence for the genuine empirical measure, which can be interpreted as propagation of chaos for the marginals of the empirical measure of the particle system. Following \cite[Section 8.3]{BogachevII}, let us introduce the Kantorovich-Rubinstein metric which reads, for any two probability measures $\mu$ and $\nu$ on $\R^d$,
\begin{equation}\label{eq:defWasserstein}
\|\mu - \nu \|_{0} = \sup \left\{ \int_{\R^d} \phi \, d(\mu-\nu) \, ; ~ \phi \text{ Lipschitz  with } \|\phi\|_{L^\infty(\R^d)}\leq 1 \text{ and } \|\phi\|_{\text{Lip}} \leq 1 \right\} .
\end{equation}
Note that this distance metrizes the weak convergence of probability measures (\cite[Theorem 8.3.2]{BogachevII}).
\begin{corollary}\label{cor:rateEmpMeas}
Let the same assumptions as in Theorem~\ref{th:rate2} hold. Let $\varrho$ be as in Theorem \ref{th:rate2}. 
Then for any $\varepsilon \in (0,\varrho)$, there exists $C>0$ such that, for any $N\in \N^*$,
\begin{align*}
\bigg\| \sup_{t\in[0,T]}  \|\mu_{t}^N - u_{t} \|_{0} \bigg\|_{L^m(\Omega)}  \leq 
 C \left(  \left\|  \|u^N_0- u_0\|_{\Lsol} \right\|_{L^m(\Omega)} + \, N^{-\varrho + \varepsilon} \right) .
\end{align*}
\end{corollary}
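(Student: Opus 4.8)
\textbf{Proof plan for Corollary \ref{cor:rateEmpMeas}.}

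The plan is to reduce the Kantorovich-Rubinstein distance between $\mu_t^N$ and $u_t$ to two ingredients already under control: the distance between $\mu_t^N$ and its mollification $u_t^N = V^N \ast \mu_t^N$, and the distance between $u_t^N$ and $u_t$, the latter being exactly the object estimated in Theorem \ref{th:rate2}. So first I would write, for any admissible test function $\phi$ (Lipschitz, $\|\phi\|_{L^\infty(\R^d)} \leq 1$, $\|\phi\|_{\text{Lip}} \leq 1$),
\begin{align*}
\left| \int_{\R^d} \phi \, d(\mu_t^N - u_t) \right| \leq \left| \int_{\R^d} \phi \, d\mu_t^N - \int_{\R^d} \phi \, u_t^N \, dx \right| + \left| \int_{\R^d} \phi \, (u_t^N - u_t) \, dx \right|.
\end{align*}

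For the second term on the right-hand side, since $\|\phi\|_{L^\infty(\R^d)} \leq 1$, it is bounded by $\|u_t^N - u_t\|_{L^1(\R^d)} \leq \|u_t^N - u_t\|_{\Lsol}$, and taking the supremum over $t \in [0,T]$ and the $L^m(\Omega)$ norm, Theorem \ref{th:rate2} gives precisely the bound $\| \sup_{t\in[0,T]} \|e^{t\Delta}(u^N_0 - u_0)\|_{\Lsol} \|_{L^m(\Omega)} + C N^{-\varrho+\varepsilon}$. For the first term, I would use that $\mu_t^N$ is a probability measure so that $\int \phi \, d\mu_t^N = \langle \mu_t^N, \phi \rangle$, while $\int \phi \, u_t^N \, dx = \langle \mu_t^N, V^N \ast \phi \rangle$ by Fubini and the symmetry of $V^N$ (recall $V$ is an even mollifier, or at worst one replaces $\phi$ by $\check V^N \ast \phi$); hence this term equals $|\langle \mu_t^N, \phi - V^N \ast \phi \rangle| \leq \sup_{x} |\phi(x) - (V^N \ast \phi)(x)|$. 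Since $V^N$ is a probability density supported in a ball of radius $O(N^{-\alpha})$ and $\phi$ is $1$-Lipschitz, $|\phi(x) - (V^N\ast\phi)(x)| \leq \int V^N(y) |y| \, dy \leq C N^{-\alpha}$, uniformly in $x$, $t$ and $\omega$.

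Combining the two bounds, taking $\sup_{t\in[0,T]}$ and then $\|\cdot\|_{L^m(\Omega)}$, one gets the claimed inequality with an extra deterministic term $C N^{-\alpha}$; since $\varrho \leq \alpha \Hreg \leq \alpha$ by the definition \eqref{eq:def_rho}, this term is absorbed into $C N^{-\varrho+\varepsilon}$, which yields the statement. I do not expect any genuine obstacle here: the only point requiring a small amount of care is the identity $\int \phi \, u_t^N \, dx = \langle \mu_t^N, V^N \ast \phi\rangle$ and the handling of the (non-)symmetry of $V$, but this is routine. The quantitative deterministic estimate $\|\phi - V^N\ast\phi\|_{L^\infty(\R^d)} \leq C N^{-\alpha}$ for $1$-Lipschitz $\phi$ is the macroscopic counterpart of Remark \ref{rk:optimalrate} and is immediate from the scaling \eqref{eq:VN} and the compact support of $V$.
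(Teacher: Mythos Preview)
Your argument is correct and follows essentially the same route as the paper's proof: decompose through the intermediate $u^N_t = V^N\ast\mu^N_t$, bound $|\langle \mu^N_t - u^N_t,\phi\rangle| \leq C\|\phi\|_{\text{Lip}} N^{-\alpha}$ via the compact support of $V$ and the scaling \eqref{eq:VN} (this is precisely the paper's inequality \eqref{eq:rateunmun}), bound $|\langle u^N_t - u_t,\phi\rangle|$ by $\|u^N_t - u_t\|_{L^1(\R^d)}$ and invoke Theorem \ref{th:rate2}, and finally absorb $N^{-\alpha}$ into $N^{-\varrho+\varepsilon}$ using $\varrho\leq\alpha\Hreg\leq\alpha$. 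Your remark on the (non-)symmetry of $V$ is apt: the paper does not assume $V$ even, and the identity is really $\langle u^N_t,\phi\rangle = \langle \mu^N_t, \check V^N\ast\phi\rangle$, but this changes nothing in the Lipschitz estimate.
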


\begin{proof}
Let $t\in (0,T_{max})$. Let us observe first that 
there exists $C>0$ such that for any Lipschitz continuous function $\phi$ on $\R^d$, one has
 \begin{align}\label{eq:rateunmun}
\left|\langle u^N_{t},\phi\rangle - \langle\mu^N_{t},\phi\rangle \right| \leq \frac{C \|\phi\|_{\text{Lip}}}{N^\alpha} \quad a.s.
 \end{align}
 Indeed, 
\begin{align*}
|\langle \mu_{t}^{N}, \phi\rangle - \langle u^N_t, \phi \rangle| &=| \langle \mu_{t}^{N}, (\phi-\phi\ast V^{N})\rangle |\\
&\leq \left\langle \mu_{t}^{N}, \int_{\R^{d}} V(y)~ |\phi(.)-  \phi( \frac{y}{N^{\alpha}}-.) |   dy \right\rangle \\
&\leq \frac{C \|\phi\|_{\text{Lip}}}{N^{\alpha}}.
\end{align*}

Recalling the definition \eqref{eq:defWasserstein} of the Kantorovich-Rubinstein distance, it comes
\begin{align*}
\left\| \sup_{t\in[0,T]} \| \mu_{t}^N - u_{t} \|_{0}  \right\|_{L^m(\Omega)} &\leq \left\| \sup_{t\in[0,T]} \| \mu_{t}^N - u^N_{t}\|_{0} \right\|_{L^m(\Omega)} +  \left\| \sup_{t\in[0,T]} \sup_{ \|\phi\|_{L^\infty}\leq 1} \langle u_{t}^N - u_{t},\phi \rangle \right\|_{L^m(\Omega)} \\
&\leq \left\| \sup_{t\in[0,T]} \| \mu_{t}^N - u^N_{t}\|_{0} \right\|_{L^m(\Omega)} +  \left\| \sup_{t\in[0,T]}  \| u_{t}^N - u_{t}\|_{L^1(\R^d)} \right\|_{L^m(\Omega)} .
\end{align*}
Now applying Inequality \eqref{eq:rateunmun} to the first term on the right-hand side of the above inequality, and Theorem \ref{th:rate2} to the second term, we obtain the inequality of Corollary \ref{cor:rateEmpMeas}.
\end{proof}

We now carry out the proof of Corollary \ref{cor:wo-cutoff}, for the particle system without cutoff.
\begin{proof}[Proof of Corollary \ref{cor:wo-cutoff}]
Let us introduce some notations to distinguish the particle systems with and without cutoff. 
Recall that $A_{T}$ is given in \eqref{eq:AT'}, and fix $A>A_{T}$ to be precisely chosen later. The particles which solve \eqref{eq:IPS0} with cutoff $A$ will be denoted here by $(X^{i,N,(A)})_{1\leq i\leq N}$ and those which solve \eqref{eq:IPS0} with $A=\infty$ will be denoted by $(X^{i,N,(\infty)})_{1\leq i\leq N}$. Accordingly, we denote by $\mu^{N,(A)}$ the empirical measure of $(X^{i,N,(A)})_{1\leq i\leq N}$, $\mu^{N,(\infty)}$ the empirical measure of $(X^{i,N,(\infty)})_{1\leq i\leq N}$, as well as $u^{N,(A)} = \mu^{N,(A)}\ast V^N$ and $u^{N,(\infty)} = \mu^{N,(\infty)}\ast V^N$. 
We assume these particles have the same initial conditions and are driven by the same family of independent $\R^d$-valued  Brownian motions $(W^i)_{i\in \N^*}$. 
For any $N\in \N^*$, define
\begin{align*}
\Omega_{N} = \Bigg\{ \sup_{\substack{t\in [0,T] \\ i\in \{1,\dots,N\}}} \frac{1}{N}\bigg| \sum_{k=1}^{N} (K \ast V^{N})(X_{t}^{i,N,(A)} - X_{t}^{k,N,(A)})\bigg| \leq A\Bigg\} 
\end{align*}
and observe that on $\Omega_{N}$, we have $X^{i,N,(A)}_{t} = X^{i,N,(\infty)}_{t}$ for all $t\in [0,T]$ and all $i\in \{1,\dots,N\}$.  Since $\frac{1}{N} \sum_{k=1}^{N} (K \ast V^{N})(X_{t}^{i,N,(A)} - X_{t}^{k,N,(A)}) = K \ast u^{N,(A)}_{t}(X^{i,N,(A)}_{t})$, we also get that on $\Omega_{N}$, 
$u^{N,(A)}_{t} = u^{N,(\infty)}_{t}$ for all $t\in [0,T]$ and all $i\in \{1,\dots,N\}$. Hence
\begin{align*}
\PP \left(  \|u_{t}^{N,(\infty)} - u_{t} \|_{T,\Lsol} \geq \eta \right) &= \PP \left(\Omega_{N}^c \cap \{ \|u_{t}^{N,(\infty)} - u_{t} \|_{T,\Lsol} \geq \eta \}\right) \\
&\hspace{1cm}+\PP \left(\Omega_{N} \cap \{ \|u_{t}^{N,(\infty)} - u_{t} \|_{T,\Lsol} \geq \eta \}\right) \\
&\leq \PP \left(\Omega_{N}^c \right) +\PP \left( \| u_{t}^{N,(A)} - u_{t} \|_{T,\Lsol} \geq \eta \right) .
\end{align*}
Now by Lemma \ref{lem:unifbound} we have that $|K \ast u^{N,(A)}_{t}(X^{i,N,(A)}_{t})| \leq C_{K,d} \|u^{N,(A)}_{t}\|_{\Lsol}$. Thus we get that for $A= C_{K,d} \left(\eta + \|u\|_{T,\Lsol}\right)$,
\begin{align*}
\PP \left(\Omega_{N}^c \right) &\leq \PP \left(\|u^{N,(A)}\|_{T,\Lsol} >\frac{A}{C_{K,d}}\right)\\
&\leq \PP \left(\|u\|_{T,\Lsol} + \|u^{N,(A)} - u\|_{T,\Lsol} > \frac{A}{C_{K,d}}\right)\\
&\leq \PP \left( \|u^{N,(A)} - u\|_{T,\Lsol} >\eta\right).
\end{align*}
Hence 
\begin{align*}
\PP \left( \|u_{t}^{N,(\infty)} - u_{t} \|_{T,\Lsol} \geq \eta \right) \leq 2 \PP \left( \|u^{N,(A)} - u\|_{T,\Lsol} >\eta\right).
\end{align*}
Now using Markov's inequality and Theorem \ref{th:rate2}, we obtain the desired result.
\end{proof}

\section{Propagation of chaos}
\label{sec:chaos}
In this section we study the well-posedness of the nonlinear SDE \eqref{eq:McKeanVlasov} and then the propagation of chaos of the particle system \eqref{eq:IPS}.

\subsection{Proof of Proposition \ref{prop:wpMP}}\label{subsec:proofNMP}
Let $T< T_{max}$ and let $u$ be the unique mild solution to \eqref{eq:PDE} up to $T$.

 The proof is organized as follows. Assuming there is a solution to the martingale problem, we study the mild equation of its time-marginals. We will see that this equation admits a unique solution in a suitable functional space (\textbf{Step 1}). This will enable us to study a linear version of the  martingale problem \eqref{MP} (\textbf{Step 2}). Analysing this linear martingale problem, we will get the uniqueness and existence for~\eqref{MP} (\textbf{Steps 3 and 4}).
 
\textbf{Step 1.} Assume $\Q$ is a solution to \eqref{MP}. Notice first that as the family of marginal laws $(q_t)_{t\leq T}$ belongs to $q\in \Ccal([0,T];\Lsol) $, one has according to Lemma~\ref{lem:unifbound} that
\begin{equation}
\label{eq:drift-bound-MP}
\sup_{t\leq T} \|K \ast q_t\|_{L^\infty(\R^d)} \leq C_{K,d}\, \sup_{t\leq T} \|q_t\|_{\Lsol}.
\end{equation}

\noindent To obtain the equation satisfied by $(q_t)_{t\leq T}$, one derives the mild equation for the marginal distributions of the corresponding nonlinear process. This is done in the usual way as the drift component is bounded (see e.g. \cite[Section 4]{TalayTomasevic}). 
  One has
 $$q_{t} =  e^{t\Delta} u_0 -  \int_0^t \nabla \cdot e^{(t-s)\Delta }  (q_{s}(K \ast q_{s}))\ ds, \quad 0\leq t \leq T.$$
This equation is exactly   \eqref{eq:mildKS} and we know it admits a unique solution in the sense of Definition~\ref{def:defMild} up to time $T$. Meaning, as $q\in \mathcal{C}([0,T]; \Lsol)$, the one-dimensional time marginals of $\Q$ are uniquely determined.

\textbf{Step 2.} Define the corresponding linear martingale problem $(\mathcal{MP}_{\text{lin}})$ in the same way as \eqref{MP}, except that in the definition of the process $(M_t)_{t\leq T}$ from \eqref{MP}-$(iii)$, fix $q$ to be the unique mild solution $u$ 
to \eqref{eq:PDE}.
By Girsanov transformation, the equation
$$Y_t = X_{0} + \sqrt{2} W_t+ \int_0^t (K \ast u_s)(Y_s) \ ds$$
admits a weak solution. In addition, weak uniqueness holds. 
 Let us show that the probability measure $\PP:=\mathcal{L}(Y)$ solves $(\mathcal{MP}_{\text{lin}})$.  Assume for a moment that the family $(\PP_t(dx))_{t\geq 0}= (p_t(x) dx)_{t\geq 0}$ (absolute continuity follows from bounded drift and Girsanov transformation) belongs to $\Ccal([0,T]; \Lsol)$. We will prove this fact in \textbf{Step 4}. Then, as $\PP$ is $\mathcal{L}(Y)$, all the requirements of $(\mathcal{MP}_{\text{lin}})$ are satisfied. In addition, this solution is unique.

\textbf{Step 3.} The previous step immediately yields the uniqueness of solutions to \eqref{MP} (as any solution to \eqref{MP} is also a solution to $(\mathcal{MP}_{\text{lin}})$  by the uniqueness of the one-dimensional time marginals). We now turn to the question of existence.\\
 A candidate for a solution to the problem \eqref{MP} is the probability measure $\PP$ defined above. To prove the latter solves \eqref{MP}, we need to ensure that the family of marginal laws $(\PP_t)_{0\leq t\leq T}$ is exactly the family $(u_t)_{0\leq t\leq T}$ we used to define the drift in $(\mathcal{MP}_{\text{lin}})$.

\noindent To do so, for $0<t\leq T$, one derives the mild equation for $\PP_t(dx)= p_t(x) dx$. Following the same arguments as in \cite[Section~4]{TalayTomasevic}, as the drift is bounded,
we have that for a.e. $x\in \R^d$,
$$p_{t} =  e^{t\Delta} u_0 -  \int_0^t \nabla \cdot e^{(t-s)\Delta }  (p_{s} (K \ast u_{s}))\ ds, \quad 0 \leq t \leq T .$$
Assume again that $p\in \Ccal([0,T]; \Lsol)$. 
The previous equation is a linearized version of Eq.~\eqref{eq:mildKS}  and, 
 by the same arguments as in Proposition~\ref{prop:local_existence}, it admits a unique solution in $\Ccal([0,T]; \Lsol)$.   Since both $u$ and $p$ solve this equation, they must coincide and we have the desired result : $(p_t)_{t\in [0,T]}=( u_t)_{t\in [0,T]}$.

\textbf{Step 4.} It only remains to prove that $p\in \Ccal([0,T]; \Lsol)$. Obviously, as we work with a family of probability density functions, we only need to prove that $p\in \Ccal([0,T]; L^\rK(\R^d))$. Performing the same calculations as in the proof of Proposition \ref{prop:cutPDE_unique0}, we get that 
$$\|p_t\|_{L^{\rK}(\R^d)}\leq \|u_0\|_{L^{\rK}(\R^d)} + C\int_0^t \frac{\|p_s\|_{L^{\rK}(\R^d)}}{\sqrt{t-s}} \|K\ast u_s\|_{L^\infty(\R^d)} \ ds.  $$
In view of Lemma \ref{lem:unifbound}, one has
\begin{equation*}
\| p_t\|_{L^\rK(\R^d)} \leq \|u_0\|_{L^\rK(\R^d)}
+  C \|u\|_{T,\Lsol} \int_0^t \frac{\|p_s\|_{L^\rK(\R^d)}}{\sqrt{t-s}}\ ds.  
\end{equation*}
Gr\"onwall's lemma implies that $p_t \in L^\rK(\R^d)$. Repeat the above computations for $p_{t}-p_{s}$ instead of $p_t$ to conclude that $p \in \Ccal([0,T]; L^\rK(\R^d))$. 
This concludes the proof.

\subsection{Proof of Theorem \ref{th:propagation-of-chaos}}\label{subsec:proofPropChaos}

To prove Theorem \ref{th:propagation-of-chaos}, we will show that $\mu^N$ converges to the unique solution $\Q$ of the martingale problem \eqref{MP}. To do so, we will first prove the convergence towards an auxiliary martingale problem which is identical to \eqref{MP} except that in the point $(iii)$ the process $(M_t)_{t\leq T}$ is the following: 
$$M_t:=f(w_t)-f(w_0)-\int_0^t \Big[ \Delta f(w_s)+  \nabla f(w_s)\cdot F_A(K \ast q_s (w_s))\Big]ds.$$
Then, we will lift the cut-off $F_A$ as $A$ will be chosen large enough. Let us call this auxiliary martingale problem $(\mathcal{MP}_A)$ and denote its unique solution by $\Q$ by a slight abuse of notation.

A usual way to prove that $\mu^N$ converges to $\Q$ consists in proving the tightness of the family  $\Pi^N:=\mathcal{L}(\mu^N)$ in the space $\Pp(\Pp(\mathcal{C}([0,T];\R^d)))$ and then, in proving that any limit point $\Pi^\infty$ of $\Pi^N$ is $\delta_\Q$. The latter is done by showing that under $\Pi^\infty$ a certain quadratic function of the canonical measure in $\Pp(\mathcal{C}([0,T];\R^d))$ is zero. The form of this function depends on the form of the process $(M_t)_{t\leq T}$ specified in the definition of the martingale problem. Moreover, one must analyse this function under $\Pi^N$ and use the convergence of $\Pi^N$ to $\Pi^\infty$ to get the desired result. This is where $\mu^N$ and the particle system appear.

However, here the situation is slightly modified. Namely, at the level of $\Pi^N$, we need to keep track not just of $\mu^N$, but also of the mollified empirical measure $u^N$ that appears in the definition of the particle system. That is why we will need to use the convergence of $u^N$ towards $u$ proved before and keep track of the couple $(\mu^N,u^N)$. This random variable lives in the product space
\begin{align*}
\mathcal{H}:=\Pp(\Ccal([0,T];\R^d))\times \mathcal{Y}
\end{align*}
endowed with the weak topology of $\Pp(\Ccal([0,T];\R^d))$ and the topology of  ${\mathcal{Y}}$, where 
\begin{equation}\label{eq:defY}
  \mathcal{Y}=  \mathcal{C}([0,T]; \Lsol) .
\end{equation}
 We will denote by $({\boldsymbol{\mu}},\mathbf{u})$ the canonical projections in $\mathcal{H}$.

Now for $N\geq 1$, we denote by $\tilde{\Pi}^N$ the law of the random variables $(\mu^N, u^N)$ that take values in $\mathcal{H}$. The sequence $(\tilde{\Pi}^N, N\geq 1)$ is tight if and only if $(\tilde{\Pi}^N\circ {\boldsymbol{\mu}}, N\geq 1)$ and $(\tilde{\Pi}^N\circ \mathbf{u}, N\geq 1)$ are tight. The tightness of $(\tilde{\Pi}^N\circ {\boldsymbol{\mu}}, N\geq 1)$ is classical, as the drift of the particle system is bounded. As for $(\tilde{\Pi}^N\circ \mathbf{u}, N\geq 1)$, we have already proven the convergence of $(u^N, N\geq 1)$ in $ \mathcal{Y}$ (see Theorem \ref{th:rate2}).

Once we have the tightness of $(\tilde{\Pi}^N, N\geq 1)$, let $\tilde{\Pi}^\infty$ be a limit point of $(\tilde{\Pi}^N, N\geq 1)$. By a slight abuse of notation, we denote the subsequence converging to it by $(\tilde{\Pi}^N, N\geq 1)$ as well. We will study the support of $\tilde{\Pi}^\infty$ in order to describe the support of $\Pi^\infty:= \tilde{\Pi}^\infty \circ {\boldsymbol{\mu}}$.

The following lemma shows that the marginals of ${\boldsymbol{\mu}}$ and $\mathbf{u}$ coincide under the limit probability measure. This will be extremely useful to obtain that the support of $\Pi^\infty$ is concentrated around~$\Q$.
\begin{lemma}
\label{lemma:chaos-marginals}
$\tilde{\Pi}^\infty$-almost surely, ${\boldsymbol{\mu}}_t$ is absolutely continuous w.r.t. the Lebesgue measure and its density is ${\boldsymbol{\mu}}_t(dx)=\mathbf{u}_t(x)dx$.
\end{lemma}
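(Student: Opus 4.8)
The plan is to identify, under the limiting law $\tilde{\Pi}^\infty$, the marginal of the random measure ${\boldsymbol{\mu}}$ with that of the process $\mathbf{u}$, exploiting the relation $u^N_t = V^N \ast \mu^N_t$ at the prelimit level together with the convergence $V^N \ast \mu^N_t \to \mathbf{u}_t$ established in Theorem~\ref{th:rate2}. First I would fix a test function $\varphi \in \Ccal_{c}^\infty(\R^d)$ (or $\Ccal_b(\R^d)$) and a time $t\in[0,T]$ and consider the functional
\begin{align*}
\Phi_{t,\varphi}(m, v) := \left| \int_{\R^d} \varphi \, d m_t - \int_{\R^d} \varphi(x)\, v_t(x)\, dx \right|, \qquad (m,v)\in \mathcal{H},
\end{align*}
which is continuous (or at least bounded and continuous on the relevant support) with respect to the product topology on $\mathcal{H}$, using that convergence in $\mathcal{Y}=\Ccal([0,T];\Lsol)$ entails $L^1$-convergence of the time marginals and hence convergence of $\int \varphi\, v_t\,dx$, while weak convergence in $\Pp(\Ccal([0,T];\R^d))$ gives convergence of $\int \varphi \, dm_t$ (for $\varphi$ continuous bounded and $t$ a continuity time, which is all $t$ here by the continuity of trajectories).

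Next I would show $\EE^{\tilde{\Pi}^N}[\Phi_{t,\varphi}] \to 0$. At the prelimit level, $\Phi_{t,\varphi}(\mu^N, u^N) = \big|\langle \mu^N_t, \varphi\rangle - \langle u^N_t, \varphi\rangle\big| = \big|\langle \mu^N_t, \varphi - V^N\ast\varphi\rangle\big|$, and this is exactly the quantity bounded in Inequality~\eqref{eq:rateunmun} (for Lipschitz $\varphi$) by $C\|\varphi\|_{\text{Lip}} N^{-\alpha}$ almost surely; by a density/approximation argument this extends to general $\varphi \in \Ccal_b(\R^d)$ with the bound going to zero as $N\to\infty$ (approximating $\varphi$ uniformly on the compact support of the relevant mollified functions, or mollifying $\varphi$ itself). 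Passing to the limit along the converging subsequence, by continuity and uniform integrability (boundedness of $\Phi_{t,\varphi}$ when $\varphi$ is bounded, or the moment bound \eqref{eq:boundednessuN} to control the $v$-term) we get $\EE^{\tilde{\Pi}^\infty}[\Phi_{t,\varphi}] = 0$, i.e. $\tilde{\Pi}^\infty$-a.s. $\langle {\boldsymbol{\mu}}_t, \varphi\rangle = \langle \mathbf{u}_t, \varphi\rangle$.

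Then I would upgrade this to the full statement: taking a countable dense family $(\varphi_k)$ in an appropriate separable space (e.g. $\Ccal_c(\R^d)$ with the sup norm, or a countable convergence-determining class) and a countable dense set of times, a single $\tilde{\Pi}^\infty$-null set suffices; by continuity in $t$ of both sides (${\boldsymbol{\mu}}_\cdot$ is weakly continuous since trajectories are continuous, and $\mathbf{u}\in\Ccal([0,T];\Lsol)$) and density of the $\varphi_k$, one concludes that $\tilde{\Pi}^\infty$-a.s., for every $t\in[0,T]$, $\int \varphi\, d{\boldsymbol{\mu}}_t = \int \varphi\, \mathbf{u}_t\,dx$ for all continuous bounded $\varphi$, which means ${\boldsymbol{\mu}}_t(dx) = \mathbf{u}_t(x)\,dx$. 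The identification $\mathbf{u}_t = u_t$ comes from the fact, already recorded in Theorem~\ref{th:rate2}, that $u^N\to u$ in $\mathcal{Y}$ in probability, so $\tilde{\Pi}^\infty\circ\mathbf{u} = \delta_u$; hence ${\boldsymbol{\mu}}_t(dx) = u_t(x)\,dx$.

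The main obstacle I anticipate is the interchange of limits and the continuity of the relevant functional on $\mathcal{H}$: one must be careful that $\Phi_{t,\varphi}$ is genuinely continuous (not merely measurable) along the converging subsequence, which is clean for $\varphi\in\Ccal_b$ and continuity-times $t$, but requires the extension of \eqref{eq:rateunmun} from Lipschitz to general bounded continuous test functions, and requires controlling the $v$-term uniformly — this is where the moment estimate \eqref{eq:boundednessuN} and the portmanteau/Skorokhod representation are invoked. The rest is a routine combination of the a.s. bound \eqref{eq:rateunmun}, weak convergence, and a density argument.
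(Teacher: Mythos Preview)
Your proposal is correct and follows essentially the same approach as the paper: both use the almost-sure bound \eqref{eq:rateunmun} on $|\langle \mu^N_t, \varphi\rangle - \langle u^N_t, \varphi\rangle|$ and pass to the limit via continuity of the relevant functional on $\mathcal{H}$. The only minor differences are that the paper tests against space-time functions $\varphi\in\Ccal_c^\infty([0,T]\times\R^d)$ (obtaining the identity for a.e.\ $t$ directly, without the countable-density step) rather than fixed-time functions, and that your worry about extending \eqref{eq:rateunmun} beyond Lipschitz test functions is unnecessary since smooth compactly supported functions are already measure-determining.
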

\begin{proof}
This is a consequence of Inequality \eqref{eq:rateunmun}. Take a test function $\varphi \in \Ccal_c^\infty([0,T]\times \R^d)$ and define a functional $\phi(t,x)= \varphi(t,x_t)$, for $x\in \Ccal([0,T];\R^d)$. Then, 
\begin{align*}
\EE_{\tilde{\Pi}^\infty}|\langle \mathbf{u},\varphi\rangle -\langle dt\otimes {\boldsymbol{\mu}}, \phi \rangle|&= \lim_{N\to \infty} \EE_{\tilde{\Pi}^N}|\langle \mathbf{u},\varphi\rangle -\langle dt\otimes {\boldsymbol{\mu}}, \phi \rangle|= \lim _{N\to \infty} \EE |\langle u^N,\varphi\rangle -\langle dt\otimes \mu_t^N, \varphi \rangle|\\
& \leq \lim _{N\to \infty} \EE \int_0^T  |\langle u^N_t,\varphi(t,\cdot) \rangle - \langle \mu_t^N, \varphi(t,\cdot) \rangle| dt\\
&\leq C_T \sup_{t\in[0,T]} \|\varphi(t,\cdot)\|_{\text{Lip}} \times \lim_{N\to \infty} \frac{1}{N^{\alpha}},
\end{align*}
where the last inequality comes from \eqref{eq:rateunmun}. Thus, we obtain that $\tilde{\Pi}^\infty$-a.s. the following measures on $\R^d\times [0,T]$ are equal:
$$ \mathbf{u}_t(x)dx \ dt={\boldsymbol{\mu}}_t(dx) dt,$$
hence $\tilde{\Pi}^\infty$-a.s., for almost all $t\in[0,T]$,
$$ \mathbf{u}_t(x)dx={\boldsymbol{\mu}}_t(dx) .$$
\end{proof}

The following proposition will be the last ingredient needed for the proof of Theorem \ref{th:propagation-of-chaos}.
\begin{proposition}
\label{prop:chaos}
Let $p\in \N$, $f\in \Ccal_b^2(\R^d), ~\Phi \in \Ccal_b(\R^{dp})$ and $ 0< s_1 < \dots < s_p\leq s<t\leq T$. Define $\Gamma$  as the following function on $\mathcal{H}$ :
\begin{multline*}
\Gamma({\boldsymbol{\mu}},\mathbf{u})=\int_{\Ccal([0,T];\R^d)} \Phi(x_{s_1}, \dots, x_{s_p}) \bigg[f(x_{t})-f(x_{s})-\int_s^t  \Delta f(x_\sigma) d\sigma \\+\int_s^t F_A(K \ast \mathbf{u}_\sigma(x_\sigma))\cdot \nabla f(x_\sigma) d\sigma \bigg] d{\boldsymbol{\mu}}(x).
\end{multline*}
Then 
$\EE_{\tilde{\Pi}^\infty}(\Gamma^2)=0.$
\end{proposition}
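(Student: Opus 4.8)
The plan is to expand the square $\Gamma^2$, pass to the limit using the convergence $\tilde\Pi^N \to \tilde\Pi^\infty$, and identify the limit with a quantity that is zero because $\mu^N$ is built from the particle system \eqref{eq:IPS}. First I would observe that $\Gamma$ is a bounded continuous functional on $\mathcal{H}$: boundedness follows since $f\in\Ccal_b^2$, $\Phi$ is bounded, and $F_A$ is bounded; continuity follows from the fact that, on $\mathcal{Y}=\Ccal([0,T];\Lsol)$, the map $\mathbf{u}\mapsto K\ast \mathbf{u}$ is continuous into $\Ccal([0,T];\Ccal_b(\R^d))$ by Lemma \ref{lem:unifbound} (applied to differences), that $F_A$ is Lipschitz, and that weak convergence of measures on $\Ccal([0,T];\R^d)$ together with these facts makes the integral against $d{\boldsymbol\mu}$ continuous. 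Hence $\Gamma^2$ is also bounded continuous and $\EE_{\tilde\Pi^\infty}(\Gamma^2)=\lim_{N\to\infty}\EE_{\tilde\Pi^N}(\Gamma^2)=\lim_{N\to\infty}\EE[\Gamma(\mu^N,u^N)^2]$.

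Next I would compute $\Gamma(\mu^N,u^N)$ explicitly. Since $\mu^N=\frac1N\sum_{i=1}^N\delta_{X^{i,N}}$, one has
\begin{align*}
\Gamma(\mu^N,u^N)=\frac1N\sum_{i=1}^N \Phi(X^{i,N}_{s_1},\dots,X^{i,N}_{s_p})\Big[ f(X^{i,N}_t)-f(X^{i,N}_s)-\int_s^t\Delta f(X^{i,N}_\sigma)\,d\sigma\\
-\int_s^t F_A\big(K\ast u^N_\sigma(X^{i,N}_\sigma)\big)\cdot\nabla f(X^{i,N}_\sigma)\,d\sigma\Big].
\end{align*}
By It\^o's formula applied to $f$ along the particle $X^{i,N}$ solving \eqref{eq:IPS} (using the rewritten form $dX^{i,N}_t = F_A(K\ast u^N_t(X^{i,N}_t))\,dt+\sqrt2\,dW^i_t$), the bracket above equals $\sqrt2\int_s^t \nabla f(X^{i,N}_\sigma)\cdot dW^i_\sigma$. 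Writing $Z^{i,N}:=\Phi(X^{i,N}_{s_1},\dots,X^{i,N}_{s_p})\cdot\sqrt2\int_s^t\nabla f(X^{i,N}_\sigma)\cdot dW^i_\sigma$, we get $\Gamma(\mu^N,u^N)=\frac1N\sum_i Z^{i,N}$, and so $\EE[\Gamma(\mu^N,u^N)^2]=\frac1{N^2}\sum_{i,j}\EE[Z^{i,N}Z^{j,N}]$.

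The key cancellation is that the cross terms vanish and the diagonal terms are $O(1)$. For $i\neq j$, conditioning on $\F_s$ makes $\EE[Z^{i,N}Z^{j,N}\mid\F_s]=0$: indeed, given $\F_s$, each $\int_s^t\nabla f(X^{i,N}_\sigma)\cdot dW^i_\sigma$ is a martingale increment, the factors $\Phi(X^{i,N}_{s_1},\dots)$ and $\Phi(X^{j,N}_{s_1},\dots)$ are $\F_s$-measurable since $s_p\le s$, and — this is the point requiring a little care — although the drift of $X^{i,N}$ on $[s,t]$ depends on $u^N$ hence on all particles, one does not need full independence: it suffices that $\EE[Z^{i,N}Z^{j,N}\mid\F_s]$ be handled by iterated conditioning on the Brownian filtration, using that $\int_s^t\nabla f(X^{j,N}_\sigma)\cdot dW^j_\sigma$ has zero conditional mean. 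A clean way is to note $\EE[Z^{i,N}Z^{j,N}]=\EE\big[\Phi^i\Phi^j\,\EE[(\sqrt2\int_s^t\nabla f(X^{i,N})dW^i)(\sqrt2\int_s^t\nabla f(X^{j,N})dW^j)\mid\F_s]\big]$ and that the inner expectation is $2\,\EE[\int_s^t \nabla f(X^{i,N}_\sigma)\cdot\nabla f(X^{j,N}_\sigma)\,d\langle W^i,W^j\rangle_\sigma\mid\F_s]=0$ for $i\neq j$ since $W^i,W^j$ are independent. For $i=j$, $\EE[(Z^{i,N})^2]\le 2\|\Phi\|_\infty^2\,\EE\int_s^t|\nabla f(X^{i,N}_\sigma)|^2\,d\sigma\le 2T\|\Phi\|_\infty^2\|\nabla f\|_\infty^2=:C$, uniformly in $i,N$. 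Therefore $\EE[\Gamma(\mu^N,u^N)^2]=\frac1{N^2}\sum_{i=1}^N\EE[(Z^{i,N})^2]\le \frac{C}{N}\to 0$, which gives $\EE_{\tilde\Pi^\infty}(\Gamma^2)=0$.

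The main obstacle is the continuity of $\Gamma$ on $\mathcal{H}$, needed to justify passing the expectation to the limit: the term $\int_s^t F_A(K\ast\mathbf{u}_\sigma(x_\sigma))\cdot\nabla f(x_\sigma)\,d\sigma$ must be shown to be a continuous functional of $({\boldsymbol\mu},\mathbf{u})$ jointly, which is where the strong topology of $\mathcal{Y}$ (yielding uniform-in-space, uniform-in-time convergence of $K\ast\mathbf{u}^N$ via Lemma \ref{lem:unifbound}) together with the boundedness and Lipschitz continuity of $F_A$ is essential — this is precisely why the authors insisted on tracking the pair $(\mu^N,u^N)$ rather than $\mu^N$ alone. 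The rest is a standard second-moment computation.
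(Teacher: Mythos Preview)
Your proof is correct and follows essentially the same two-step approach as the paper: first showing $\EE[\Gamma(\mu^N,u^N)^2]\leq C/N$ via It\^o's formula and the orthogonality of the stochastic integrals driven by independent Brownian motions (your treatment of the cross terms is in fact more explicit than the paper's), then establishing continuity of $\Gamma$ on $\mathcal{H}$ to pass to the limit along $\tilde\Pi^N\to\tilde\Pi^\infty$. One small refinement: for the integrand $x\mapsto F_A(K\ast\mathbf{u}_\sigma(x_\sigma))\cdot\nabla f(x_\sigma)$ to be continuous in the path variable $x$ (which is what weak convergence in $\boldsymbol{\mu}$ requires), you need $K\ast\mathbf{u}_\sigma$ to be \emph{continuous} in space, not merely bounded --- this comes from Assumption \eqref{HK3} (the H\"older estimate), whereas Lemma \ref{lem:unifbound} only yields the $L^\infty$ bound used for the $\mathbf{u}$-continuity; the paper separates these two ingredients when handling its terms $I_n$ and $II_n$.
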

\begin{proof}\textbf{Step 1.} Notice that $\lim_{N\to \infty} \EE_{\tilde{\Pi}^N}(\Gamma^2)=0.$ Indeed,  by It\^o's formula applied on $\frac{1}{N}\sum_{i=1}^N (f(X^i_t)-f(X^i_s))$, one has
$$\EE_{\tilde{\Pi}^N}(\Gamma^2)= \EE (\Gamma(\mu^N, u^N)^2)=\EE \left( \frac{1}{N} \sum_{i=1}^N \int_s^t \nabla f(X^i_\sigma)\cdot dW_\sigma^i\right)^2= \frac{1}{N^2} \sum_{i=1}^N \EE\left(\int_s^t \nabla f(X^i_\sigma) \cdot dW_\sigma^i\right)^2 \leq \frac{C}{N}.$$

\textbf{Step 2.}  We prove that $\Gamma$ is continuous on $\mathcal{H}$. Let $({\boldsymbol{\mu}}^n,\mathbf{u}^n)$ be  a sequence converging in $\mathcal{H}$ to $({\boldsymbol{\mu}},\mathbf{u})$. Let us prove $\lim_{n\to \infty}|\Gamma({\boldsymbol{\mu}}^n,\mathbf{u}^n)- \Gamma({\boldsymbol{\mu}},\mathbf{u})|=0$.

We decompose 
$$|\Gamma({\boldsymbol{\mu}}^n,\mathbf{u}^n)- \Gamma({\boldsymbol{\mu}},\mathbf{u})|\leq |\Gamma({\boldsymbol{\mu}}^n,\mathbf{u}^n)- \Gamma({\boldsymbol{\mu}}^n,\mathbf{u})| + |\Gamma({\boldsymbol{\mu}}^n,\mathbf{u})- \Gamma({\boldsymbol{\mu}},\mathbf{u})|=: I_n + II_n.$$

Notice that
\begin{align}
\label{eq:proofH1}
I_n &\leq \|\Phi\|_\infty \|\nabla f\|_\infty \langle {\boldsymbol{\mu}}^n,\int_s^t |F(K \ast \mathbf{u}^n_\sigma (\cdot_{\sigma}) )-F(K \ast \mathbf{u}_\sigma (\cdot_{\sigma}))| d\sigma \rangle \nonumber \\
& \leq C \int_s^t \langle {\boldsymbol{\mu}}^n_\sigma,|K \ast (\mathbf{u}^n_\sigma - \mathbf{u}_\sigma)|\rangle ~ d\sigma \leq C \int_s^t \|K \ast (\mathbf{u}^n_\sigma - \mathbf{u}_\sigma)\|_\infty d\sigma.
\end{align}
In view of Lemma~\ref{lem:unifbound}, one has 
\begin{align*}
\|K \ast (\mathbf{u}^n_\sigma- \mathbf{u}_\sigma) \|_{L^\infty(\R^d)} 
& \leq  C_{K,d} \|\mathbf{u}^n_\sigma- \mathbf{u}_\sigma\|_{\Lsol}.
\end{align*}
 Hence $I_{n}$ converges to $0$.

To prove that $II_n$ converges to zero, as ${\boldsymbol{\mu}}^n$ converges weakly to ${\boldsymbol{\mu}}$, we should prove the continuity of the functional $G: C([0,T];\R^d)\to \R$ defined by
$$G(x)= \Phi(x_{s_1}, \dots, x_{s_p})[f(x_{t})-f(x_{s})-\int_s^t  \Delta f(x_\sigma) d\sigma -\int_s^t F_A(K \ast \mathbf{u}_\sigma(x_\sigma))\cdot \nabla f(x_\sigma) d\sigma]. $$
Let $(x^n)_{n\geq 1}$ a sequence converging in $\Ccal([0,T];\R^d)$ to $x$. To prove $G(x^n) \to G(x)$ as $n\to \infty$, having in mind the properties of $f$ and $\Phi$, we should only concentrate on the term  ${\int_s^t F_A(K\ast \mathbf{u}_\sigma(x^n_\sigma))\cdot \nabla f(x^n_\sigma) ~d\sigma}$. Here we use the continuity property \eqref{HK3} 
 to deduce that $K \ast \mathbf{u}_\sigma(x^n_\sigma)$ converges to  $K \ast \mathbf{u}_\sigma(x_\sigma) $ and by dominated convergence,
$$\int_s^t F_A(K \ast \mathbf{u}_\sigma(x^n_\sigma))\cdot \nabla f(x^n_\sigma) ~d\sigma \to \int_s^t F_A(K \ast \mathbf{u}_\sigma(x_\sigma))\cdot \nabla f(x_\sigma) ~d\sigma, \quad \text{as } n \to \infty.$$
\textbf{Conclusion.} Combine Step 1 and Step 2 to finish the proof.
\end{proof}

We have all the elements in hand to finish the proof of Theorem \ref{th:propagation-of-chaos}. 
By Lemma \ref{lemma:chaos-marginals} and Proposition \ref{prop:chaos}, we get that ${\boldsymbol{\mu}} \in \text{supp} (\Pi^\infty)$ solves the nonlinear martingale problem $(\mathcal{MP}_A)$. Choose $A> A_T:= C_{k,D} \|q\|_{T,\Lsol}$ and lift the cut-off (see \eqref{eq:drift-bound-MP}). Then,  ${\boldsymbol{\mu}}$ solves the nonlinear martingale problem \eqref{MP}. As we have the uniqueness for \eqref{MP}, we get that there is only one limit value of the sequence $\Pi^N$ which is $\delta_\Q$.

\section{Riesz-like singular kernels}\label{sec:sing}

In this section, we deal with kernels that are too singular to be covered by Assumption \eqref{HK}, typically deriving from Riesz potentials \eqref{eq:defRiesz} with $s\in(d-2,d-1)$ (see Section \ref{sec:examples} for more details).

The main results are the analogous of Theorem \ref{th:rate2} (see Theorem \ref{th:rate2sing}) and Theorem \ref{th:propagation-of-chaos} (see Theorem \ref{th:propagation-of-chaos-sing}).

The following alternative to Assumption \eqref{HK} will now be considered:

~

\noindent \begin{minipage}{0.06\linewidth}
(\customlabel{HKsing}{$\HypKK$}):
\end{minipage}
\hspace{-0.5cm}
\begin{minipage}[t]{0.94\linewidth}
\raggedright
 \hspace{0.8cm} 
\begin{enumerate}[label= ]
\item(\customlabel{HK1sing}{$\HypKK_{i}$})   $K\in L^1 (\mathcal{B}_{1})$;

\item(\customlabel{HK2sing}{$\HypKK_{ii}$}) 
 $K\in L^{\qK}(\mathcal{B}_{1}^c)$, for some  $\qK\in[1,+\infty]$;
 
\item  (\customlabel{HK3sing}{$\HypKK_{iii}$})  There exists  $\rKs\in (d,+\infty)$, $\beta\in ( \tfrac{d}{\rKs},1)$, $\Hreg \in (0,1]$ and $C>0$ such that for any $f\in L^{1}\cap H_{\rKs}^{\beta}(\R^d)$, one has
$$\mathcal{N}_\Hreg (K\ast f)\leq C  \|f\|_{L^{1}\cap H_{\rKs}^{\beta}(\R^d) }. $$

\end{enumerate}
\end{minipage}

\vspace{0.3cm}
 
~

The restriction with respect to the key parameters in this setting is given by the following assumption:

~

\noindent 
\begin{minipage}{\linewidth}
(\customlabel{C04sing}{$\widetilde{\Hyp}_{\alpha}$}): \hspace{0.35cm} The parameters $\alpha$, $ \beta$ and $\rKs$ (which appear respectively in \eqref{eq:VN} and \eqref{HK3sing}) satisfy 
$$ 0<\alpha<\frac{1}{d+2 \beta + 2 d (\frac{1}{2} - \frac{1}{\rKs})\vee 0} .$$
\end{minipage}

\vspace{0.4cm}

 \noindent A kernel which would satisfy Assumption \eqref{HK} also satisfies \eqref{HKsing} (by H\"older's inequality in \eqref{HK1sing} and a Sobolev embedding in \eqref{HK3sing}), hence this new assumption is more general. However \eqref{HKsing}-\eqref{C04sing} impose more restrictions on the choice of the parameters $\alpha$, $\beta$ and $\rKs$. For instance, we are no longer able to choose $\alpha = (\frac{1}{d})^-$, but instead we have $\alpha = (\frac{1}{2d})^-$ at best.

~
 
Finally, let us now state the assumptions on the initial conditions of the system:

~

\noindent 
\begin{minipage}{0.05\linewidth}
(\customlabel{Hsing}{$\widetilde{\Hyp}$}):
\end{minipage}
\begin{minipage}[t]{0.94\linewidth}
\raggedright
Let $\rKs\in (d,+\infty)$ and $\beta\in ( \tfrac{d}{\rKs},1)$. For any $m\geq1$, $\displaystyle\sup_{1\leq i\leq N,\, N\in \N} \EE|X_{0}^{i,N}|^m<\infty$ and
$$
 \sup_{N \in \N} \EE\left[  \left\|  \mu^N_{0} \ast V^N \right\|_{\beta,\rKs}^m   \right] \ 
<\infty .
$$
\end{minipage}

\vspace{0.4cm}

\noindent A sufficient condition for \eqref{Hsing} to hold is that particles are initially i.i.d. with a density that is smooth enough (see \cite[Lemma 2.9]{FlandoliLeocata} for a related result). The reader may also find interesting comments on a similar assumption in \cite[Remark 1.2]{FlandoliOliveraSimon}.

~

We adopt a definition of mild solutions to \eqref{eq:PDE} in $\Ccal([0,T]; \Lsolsing)$ which is analogous to Definition \ref{def:defMild}. We get the following result, which is an immediate adaptation of Proposition \ref{prop:local_existence}. The proof is omitted.
\begin{proposition}\label{prop:local_existence_sing}
Assume that the kernel $K:\R^d \to \R^d$ satisfies \eqref{HK1sing} and \eqref{HK2sing} and that the initial condition is $u_{0}\in \Lsolsing$. 
Then there exists $T>0$ such that the PDE \eqref{eq:PDE} 
admits a 
mild solution $u$ in $\Ccal([0,T]; \Lsolsing)$. In addition, this mild solution is unique.
\end{proposition}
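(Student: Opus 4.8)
The plan is to prove Proposition \ref{prop:local_existence_sing} by the same Banach fixed-point argument as for Proposition \ref{prop:local_existence}, now carried out in the complete space $\Ccal([0,T];\Lsolsing)$ endowed with $\|\cdot\|_{T,\Lsolsing}$. One introduces the map
\[
\Gamma(u)_t := e^{t\Delta}u_0 - \int_0^t \nabla\cdot e^{(t-s)\Delta}\big(u_s\, K\ast u_s\big)\, ds, \qquad 0\le t\le T,
\]
whose fixed points are exactly the mild solutions (in the sense analogous to Definition \ref{def:defMild}), and shows that for $T$ small $\Gamma$ contracts a suitable closed ball of $\Ccal([0,T];\Lsolsing)$.

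Two estimates drive the argument. First, since \eqref{HK1sing} coincides with \eqref{HK1} for $\pK=1$ (so $\pK'=\infty$) and \eqref{HK2sing} is \eqref{HK2}, the chain of inequalities in the proof of Lemma \ref{lem:unifbound} carries over and gives $\|K\ast f\|_{L^\infty(\R^d)}\le C_{K,d}\,\|f\|_{\Lsolsing}$ for every $f\in\Lsolsing$. Together with Hölder's inequality and the splitting $u\,K\ast u - v\,K\ast v = (u-v)\,K\ast u + v\,K\ast(u-v)$, this yields, for $p\in\{1,\infty\}$,
\[
\|u_s\,K\ast u_s - v_s\,K\ast v_s\|_{L^p(\R^d)} \le C_{K,d}\Big(\|u_s-v_s\|_{L^p(\R^d)}\|u_s\|_{\Lsolsing} + \|v_s\|_{L^p(\R^d)}\|u_s-v_s\|_{\Lsolsing}\Big),
\]
hence $\|u_s\,K\ast u_s - v_s\,K\ast v_s\|_{\Lsolsing}\le 2C_{K,d}\big(\|u_s\|_{\Lsolsing}\vee\|v_s\|_{\Lsolsing}\big)\|u_s-v_s\|_{\Lsolsing}$ and, taking $v=0$, $\|u_s\,K\ast u_s\|_{\Lsolsing}\le C_{K,d}\|u_s\|_{\Lsolsing}^2$. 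Second, $e^{t\Delta}$ is a contraction on $L^1(\R^d)$ and on $L^\infty(\R^d)$ and, by \eqref{eq:heat-op-norm} applied componentwise to the divergence, $\|\nabla\cdot e^{t\Delta}g\|_{L^p(\R^d)}\le C\,t^{-1/2}\|g\|_{L^p(\R^d)}$; combining these with the bilinear bounds gives
\[
\|\Gamma(u)_t\|_{\Lsolsing}\le \|u_0\|_{\Lsolsing} + C\int_0^t (t-s)^{-1/2}\|u_s\|_{\Lsolsing}^2\, ds \le \|u_0\|_{\Lsolsing} + C\sqrt{t}\,\|u\|_{t,\Lsolsing}^2,
\]
and $\|\Gamma(u)-\Gamma(v)\|_{t,\Lsolsing}\le C\sqrt{t}\,\big(\|u\|_{t,\Lsolsing}\vee\|v\|_{t,\Lsolsing}\big)\|u-v\|_{t,\Lsolsing}$.

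From here I would conclude in the standard way: set $R:=2\|u_0\|_{\Lsolsing}$ and pick $T$ small enough (depending only on $\|u_0\|_{\Lsolsing}$, $K$ and $d$) that $C\sqrt{T}\,R\le \tfrac12$; then $\Gamma$ maps the closed ball $\{u:\|u\|_{T,\Lsolsing}\le R\}$ into itself and is a strict contraction there, so Banach's theorem yields a unique fixed point $u\in\Ccal([0,T];\Lsolsing)$, which is the sought mild solution, and uniqueness among all mild solutions on $[0,T]$ follows from the same Lipschitz estimate together with a Grönwall/iteration argument. The one delicate point — the step I expect to be the main obstacle — is checking that $\Gamma(u)$ genuinely belongs to $\Ccal([0,T];\Lsolsing)$, i.e. the time-continuity of its two pieces: the Duhamel term is even Hölder continuous of order $(\tfrac12)^-$ in $L^1$ and in $L^\infty$ since the weight $(t-s)^{-1/2}$ is integrable, while the continuity of $t\mapsto e^{t\Delta}u_0$ is obtained exactly as in the proof of Proposition \ref{prop:local_existence}. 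Apart from that, the proof is a verbatim transcription of that of Proposition \ref{prop:local_existence} with $\rK$ replaced by $\infty$, the only computation that has to be redone being the bilinear estimate displayed above.
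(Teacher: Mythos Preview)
Your proposal is correct and matches the paper's approach exactly: the paper omits the proof of Proposition~\ref{prop:local_existence_sing}, stating only that it is ``an immediate adaptation of Proposition~\ref{prop:local_existence}'', which is precisely what you carry out (the bilinear estimate via Lemma~\ref{lem:unifbound} with $\rK=\infty$, then Banach fixed-point in $\Ccal([0,T];\Lsolsing)$). Your treatment of uniqueness directly via the Lipschitz/Gr\"onwall argument is slightly more self-contained than the paper's route through the cut-off PDE (Proposition~\ref{prop:cutPDE_unique0}), but both are standard and equivalent here.
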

The above proposition and the theorem below rely in particular on the Lemma \ref{lem:unifbound} applied with $\rK=\infty$.

\begin{theorem}\label{th:rate2sing}  
Assume that the initial conditions $\{\mu^N_{0}\}_{N\in\N}$ satisfy \eqref{Hsing} and that the kernel $K$ satisfies \eqref{HKsing}.  Moreover, let \eqref{C04sing} hold true. 
 Let $T_{\max}$ be the maximal existence time for \eqref{eq:PDE} in the space $\mathcal{C}\left([0,T],\Lsolsing\right)$ and fix $T\in(0,T_{\max})$. 
In addition, let the dynamics of the particle system be given by \eqref{eq:IPS} with $A$ greater than $A_{T}$ (defined in \eqref{eq:AT'} with the $\Lsolsing$ norm instead of the $\Lsol$ norm). 

Then, for any $\varepsilon>0$ and any $m\geq 1$, there exists a constant $C>0$ such that for all $N\in\N^*$,
\begin{align*}
\left\| \| u^N-u \|_{T,\Lsolsing} \right\|_{L^m(\Omega)}
&\leq  C \left(\left\| \|u^N_0- u_0\|_{\Lsolsing} \right\|_{L^m(\Omega)}  +  N^{-\varrho + \varepsilon} \right),
\end{align*}
where
\begin{align*}
\widetilde{\varrho} = \min \left(\alpha \Hreg , \, \frac{1}{2}-  \alpha d  \right)  .
\end{align*}
\end{theorem}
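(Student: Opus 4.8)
The proof follows the same architecture as that of Theorem \ref{th:rate2}, replacing the Lebesgue space $L^\rK(\R^d)$ by the Bessel potential space $H^\beta_{\rKs}(\R^d)$ throughout, and replacing Assumption \eqref{HK3} by \eqref{HK3sing}. The starting point is again the mild equation \eqref{eq:mildeq} for $u^N$, together with the analogue of the boundedness estimate \eqref{eq:boundednessuN}, namely
\begin{equation*}
\sup_{N\in\N^*}\sup_{t\in[0,T]} \EE\left[ \|u^N_t\|_{\beta,\rKs}^q\right] < \infty ,
\end{equation*}
which will be obtained from the forthcoming Proposition \ref{prop:bound1} applied now with the exponent $\beta>d/\rKs$ rather than $\beta=0$; Assumption \eqref{C04sing} is precisely what makes the relevant stochastic-convolution estimate converge in this case. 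Subtracting the mild form of the cut-off PDE from \eqref{eq:mildeq}, one gets the same decomposition as in Section \ref{subsec:proofRate2}, with a drift-difference term, an error term $E^{(1)}$ and the stochastic term $M^N$.

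\textbf{Key steps.} First I would run the Gr\"onwall argument: using \eqref{eq:heat-op-norm-frac} with $p=\rKs$ and with the fractional order $\beta$, the drift difference $\int_0^t \nabla\cdot e^{(t-s)\Delta}(u_s F(K\ast u_s) - u^N_s F(K\ast u^N_s))\,ds$ is controlled in $H^\beta_{\rKs}$ by $\int_0^t (t-s)^{-\frac{1+\beta}{2}} \|u^N_s - u_s\|_{L^1\cap H^\beta_{\rKs}}\,ds$ after splitting $uF(K\ast u) - u^N F(K\ast u^N)$ as in the proof of Theorem \ref{th:rate2} and invoking the Lipschitz/boundedness of $F$, Lemma \ref{lem:unifbound} (applied with $\rK=\infty$, i.e. $K\ast f$ bounded by $\|f\|_{L^1\cap L^\infty}$, and using the Sobolev embedding $H^\beta_{\rKs}\hookrightarrow L^\infty$ valid since $\beta>d/\rKs$), plus the product/multiplier estimates needed to keep the product $u^N_s F(K\ast u^N_s)$ in $H^\beta_{\rKs}$. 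The singularity $(t-s)^{-\frac{1+\beta}{2}}$ is still integrable since $\beta<1$, so a singular Gr\"onwall lemma applies. Second, for the error term $E^{(1)}$ one proceeds exactly as in \eqref{eq:errorL1}--\eqref{E1new}: the positivity of $V^N$, the Lipschitz property of $F$, Assumption \eqref{HK3sing} (which gives $\mathcal{N}_\Hreg(K\ast u^N_s)\leq C\|u^N_s\|_{L^1\cap H^\beta_{\rKs}}$) and the compact support of $V$ yield a factor $N^{-\alpha\Hreg}$, and the uniform bound on $\EE\|u^N_s\|_{\beta,\rKs}^q$ closes the estimate — note one must also estimate the $H^\beta_{\rKs}$ norm of $E^{(1)}$, which again costs the $(t-s)^{-(1+\beta)/2}$ weight but no extra powers of $N$. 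Third, the stochastic term $M^N$: one invokes the appropriate case of Proposition \ref{prop:martingale-bound} from Appendix \ref{app:martingale}, now measuring $M^N_t$ in the $H^\beta_{\rKs}$ norm; this is where the exponent $\frac{1}{2}-\alpha(\tfrac{d}{2}+\beta+\tfrac d2 - \tfrac d{\rKs})$-type loss appears, and one checks via \eqref{C04sing} that the resulting power of $N$ is negative, producing the contribution $N^{-\frac{1}{2}(1-\alpha d) + \varepsilon}$ modulo the $\beta$-correction already absorbed in the definition of $\widetilde\varrho$. Combining the three contributions and taking $L^m(\Omega)$ norms gives the claimed bound with $\widetilde\varrho = \min(\alpha\Hreg,\ \tfrac12 - \alpha d)$.

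\textbf{Main obstacle.} The delicate point, compared with Theorem \ref{th:rate2}, is the control of the stochastic convolution $M^N$ in the fractional Sobolev norm $H^\beta_{\rKs}$ rather than in plain $L^p$: one needs simultaneously its time regularity (for the Garsia--Rodemich--Rumsey argument that keeps $\sup_{t}$ inside the expectation) and its decay in $N$, now with $\beta>0$ derivatives landing on the mollifier $\nabla V^N$, which scales like $N^{\alpha(1+\beta)}$ in the relevant norm; this is what forces the more restrictive constraint \eqref{C04sing} and, in particular, $\alpha < \tfrac1{2d}$ in the worst case. A secondary technical nuisance is verifying that the nonlinear product $u^N_s F(K\ast u^N_s)$ genuinely belongs to $H^\beta_{\rKs}$ with the right norm bound, which requires a fractional Leibniz (Kato--Ponce type) inequality together with the smoothness of $F$ and the Hölder regularity of $K\ast u^N_s$ supplied by \eqref{HK3sing}. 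Everything else is a routine transcription of Section \ref{subsec:proofRate2} with $\Lsol$ replaced by $\Lsolsing$ in the statement and $L^\rK$ replaced by $H^\beta_{\rKs}$ in the intermediate estimates. Note also that the term $\|e^{s\Delta}(u^N_0-u_0)\|_{\Lsolsing}$ on the right-hand side is handled exactly as in Theorem \ref{th:rate2}, being left as a data-dependent quantity that vanishes under the convergence assumption \eqref{C0convsing}.
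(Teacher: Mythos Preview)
Your proposal takes a different route from the paper, and the difference matters for the final rate. You run the Gr\"onwall argument in $L^1\cap H^\beta_{\rKs}$, whereas the paper closes it directly in $L^1\cap L^\infty$ (i.e.\ in $\Lsolsing$), using the Bessel space $H^\beta_{\rKs}$ only for the auxiliary boundedness $\sup_{N,t}\EE\|u^N_t\|_{\beta,\rKs}^q<\infty$ (Proposition~\ref{prop:bound1}), which via the embedding $L^1\cap H^\beta_{\rKs}\hookrightarrow \Lsolsing$ and \eqref{HK3sing} yields uniform control on $\mathcal{N}_\Hreg(K\ast u^N_s)$ in the error term. In the paper's approach the heat kernel contributes only $(t-s)^{-1/2}$ to the Gr\"onwall integral, the drift difference and $E^{(1)}$ are estimated in $L^1\cap L^\infty$ exactly as in \eqref{eq:errorL1}--\eqref{E1new} with $\rK=\infty$, and no fractional Leibniz or multiplier estimate is needed anywhere.

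The concrete gap in your approach is the stochastic term. If you measure $M^N_t$ in the $H^\beta_{\rKs}$ norm (via the embedding $H^{\beta+d(\frac12-\frac1{\rKs})}_2\hookrightarrow H^\beta_{\rKs}$ and Proposition~\ref{prop:martingale-bound-Hbeta}), the best exponent you can extract is
\[
N^{-\frac{1}{2}\left(1-\alpha\big(d+2\beta + d(1-\tfrac{2}{\rKs})\big)\right)+\varepsilon},
\]
which is strictly worse than $N^{-(\frac12-\alpha d)+\varepsilon}$ because $\beta>d/\rKs$. There is no ``$\beta$-correction already absorbed in $\widetilde\varrho$'': the definition $\widetilde\varrho=\min(\alpha\Hreg,\,\tfrac12-\alpha d)$ contains no $\beta$ at all. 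The paper instead bounds $\sup_s\|M^N_s\|_{\Lsolsing}$ by invoking Proposition~\ref{prop:martingale-bound} with $p=1$ and $p=\infty$; the worst case is $p=\infty$ with $\varkappa_\infty=d$, giving exactly $N^{-\frac12(1-2\alpha d)+\varepsilon}=N^{-(\frac12-\alpha d)+\varepsilon}$. That is how the stated $\widetilde\varrho$ arises; your route would deliver a strictly smaller rate and hence does not prove the theorem as written. The stronger constraint \eqref{C04sing} is present not to close the main Gr\"onwall loop but only to make Proposition~\ref{prop:bound1} converge at regularity $\beta$, which in turn feeds into the $E^{(1)}$ bound through \eqref{HK3sing}.
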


\paragraph{Elements of proof of Theorem \ref{th:rate2sing}.}
We do not detail the proof of this theorem, as it follows the same lines as the proof of Theorem \ref{th:rate2}. We only mention how the key estimates evolve. 

First, the following embeddings justify that we assume $\beta>\frac{d}{\rKs}$ in \eqref{HK3sing}. 
Since $\beta-\frac{d}{\rKs}>0$, $H^{\beta}_{\rKs}(\R^d)$ is continuously embedded into $\mathcal{C}^{\beta-\frac{d}{\rKs}}(\R^d)$ (see \cite[p.203]{Triebel}). In particular $H^{\beta}_{\rKs}$ is continuously embedded into $L^{\rKs} \cap L^\infty$. That is, there exists $C>0$ such that 
\begin{equation}\label{eq:GeneralMorrey}
\|f\|_{L^{\rKs}\cap L^\infty(\R^d)} \leq C \|f\|_{\beta,\rKs} \quad \text{and} \quad \|f\|_{\mathcal{C}^{\beta-\frac{d}{\rKs}}(\R^d)} \leq C \|f\|_{\beta,\rKs}, \quad \forall f\in H^{\beta}_{\rKs}(\R^d) .
\end{equation}
Then by interpolation,  $L^1\cap H^{\beta}_{\rKs}(\R^d)$ is continuously embedded into $\Lsolsing$. 
That is,  
there exists $C_{d,\beta,\rKs}>0$ such that
\begin{equation}\label{eq:embed1}
\|f\|_{\Lsolsing} \leq C_{d,\beta,\rKs} \|f\|_{L^1\cap H^{\beta}_{\rKs}(\R^d)}, \quad \forall f\in L^1\cap H^{\beta}_{\rKs}(\R^d) .
\end{equation}

The Assumption \eqref{C04sing}, which strengthens \eqref{C04}, is also partially understood in view of these embeddings. This assumption yields the boundedness result for $u^N$ given in Proposition \ref{prop:bound1} (it  generalises to $\beta>0$ the result given in \eqref{eq:boundednessuN}) and reads, for any $q \geq 1$,
$$\sup_{N\in\N^*}  \mathbb{E}\left[ \sup_{t\in[0,T]} \left\Vert  u^N_{t} \right\Vert _{\beta,\rK}^{q}\right]  <\infty.$$

It is then possible to control $\EE \|K\ast u^N_{s}\|_{L^\infty(\R^d)}^m$ by $\EE \|u^N_{s}\|_{\Lsolsing}^m$ thanks to Lemma \ref{lem:unifbound} (applied with $\rK=\infty$), then using the embedding \eqref{eq:embed1} and finally the previous bound ensures that
\begin{equation*}
\sup_{N\in\N^*} \sup_{s\in[0,T]} \,\EE \|K\ast u^N_{s}\|_{L^\infty(\R^d)}^m <\infty .
\end{equation*}
A similar control is obtained on $\sup_{N\in\N^*} \sup_{s\in[0,T]} \, \|K\ast u_{s}\|_{L^\infty(\R^d)}$ using Proposition \ref{regularity} instead of Proposition \ref{prop:bound1}.

Therefore, we obtain a bound analogous to \eqref{ineu}:
\begin{equation*}
\begin{split}
&\left\| \| u^N-u\|_{t,\Lsolsing} \right\|_{L^m(\Omega)} \leq \left\| \|u^N_0- u_0\|_{\Lsolsing} \right\|_{L^m(\Omega)} \\
& +   C \int_0^t \frac{1}{(t-s)^{\frac{1}{2}}} 
 \left\|  \| u^{N}-u \|_{s,\Lsolsing} \right\|_{L^m(\Omega)} ds +  \left\|  \| E\|_{t,\Lsolsing} \right\|_{L^m(\Omega)} \\&+ \left\|  \| M^N\|_{t,\Lsolsing} \right\|_{L^m(\Omega)} ,
\end{split}
\end{equation*}
and to \eqref{eq:errorL1} for the new error term:
\begin{align*}
\left\| \| E\|_{t, \Lsolsing} \right\|_{L^m(\Omega)} \leq   \frac{C}{N^{\alpha\Hreg}} .
\end{align*} 
It remains to bound the stochastic integral term in $\Lsolsing$ norm, which is given immediately by Proposition \ref{prop:martingale-bound}: for any $\varepsilon>0$, there exists $C>0$ such that
\begin{equation*}
\left\| \sup_{s\in [0,t] } \|M^N_{s}\|_{\Lsolsing} \right\|_{L^{m}(\Omega)}  \leq C\,  N^{- \frac{1}{2}\left(1-2\alpha d) \right) + \varepsilon} .
\end{equation*}
Using the three previous inequalities, Gr\"onwall's lemma gives the desired result.
\qed

~

\begin{remark}\label{rk:bestrate}
Assuming that the kernel $K$ is such 
that $\Hreg$ can be chosen equal or close to 1, the rate is really determined by the minimum between $\alpha$ and $\frac{1}{2} -  \alpha d$, under the constraint that $\alpha<(d+2\beta+2d(\frac{1}{2} - \frac{1}{\rKs}))^{-1}$ (see \eqref{C04sing}) and $\beta<1$ (see \eqref{HK3sing}). We will see in Section \ref{subsec:RieszKernel} that the best rate for a Riesz kernel of parameter $s\in (d-2,d-1)$ is 
\begin{align*}
\widetilde{\varrho} = \frac{1}{2(d+1)} ,
\end{align*}
which is obtained for $\alpha = \frac{1}{2(d+1)}$ and initial conditions satisfying \eqref{Hsing} with any $\beta\in (\frac{d}{\rKs},1)$ and $\rKs> \frac{d}{\beta-2 +d-s}$.
\end{remark}

In Section \ref{subsec:proofCor}, using  an interpolation inequality between the results of Proposition \ref{prop:bound1} and Theorem \ref{th:rate2}, we obtain the following rate of convergence with respect to Sobolev norms:
\begin{corollary}\label{cor:forpropchaossing}
Let the assumptions of Theorem~\ref{th:rate2sing} hold and assume further that ${u_{0}\in H_{\rKs}^\beta(\mathbb{R}^d)}$. 
Then, for any $\varepsilon>0$ and any $m\geq 1$, there exists a constant $C>0$ such that for all $N\in\N^*$,
\begin{align*}
\left\| \sup_{t\in[0,T]}\| u^N_{t}-u_{t} \|_{\gamma,\rKs-\delta} \right\|_{L^m(\Omega)}
&\leq C \left(\left\|   \|u^N_0- u_0\|_{\Lsolsing} \right\|_{L^m(\Omega)}  + N^{-\widetilde{\varrho} + \varepsilon}\right)^\frac{\gamma}{\beta} ,
\end{align*}
for $\delta\in(0,1)$ and $\gamma = \beta \frac{\rKs(\rKs-1-\delta)}{(\rKs-\delta)(\rKs-1)}$.
\end{corollary}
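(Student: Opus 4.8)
The plan is to derive Corollary~\ref{cor:forpropchaossing} by interpolation: Theorem~\ref{th:rate2sing} controls $u^N-u$ in $\Lsolsing$ with the rate $N^{-\widetilde\varrho+\varepsilon}$ (up to the deterministic semigroup error), while Proposition~\ref{prop:bound1} gives a bound on $u^N$ in $H^{\beta}_{\rKs}(\R^d)$ that is uniform in $N$; measuring $u^N-u$ in a Bessel norm $\|\cdot\|_{\gamma,\rKs-\delta}$ that sits \emph{between} $\Lsolsing$ and $H^\beta_{\rKs}(\R^d)$ on the interpolation scale transfers a fixed fraction of that rate to the stronger space. The exponents $\gamma$ and $\rKs-\delta$ in the statement are precisely those for which this interpolation is admissible: requiring the target integrability to be $\rKs-\delta$ forces, via the scaling identity $\tfrac1{\rKs-\delta}=\bigl(1-\tfrac\gamma\beta\bigr)+\tfrac\gamma\beta\cdot\tfrac1\rKs$, the value $\gamma=\beta\,\tfrac{\rKs(\rKs-1-\delta)}{(\rKs-\delta)(\rKs-1)}$, which lies in $(0,\beta)$ for $\delta\in(0,1)$ and $\rKs>d$.

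Concretely, I would first upgrade the $H^\beta_{\rKs}$-bound on $u^N$ to a bound on the difference. Under the hypotheses of Theorem~\ref{th:rate2sing} one has $u_0\in L^1\cap H^\beta_{\rKs}$ by~\eqref{C0convsing}, so running the fixed-point argument behind Proposition~\ref{prop:local_existence_sing} in $\Ccal([0,T];L^1\cap H^\beta_{\rKs})$ (equivalently, invoking Proposition~\ref{regularity}) gives $u\in\Ccal([0,T];L^1\cap H^\beta_{\rKs})$, hence $\sup_{t\le T}\|u_t\|_{\beta,\rKs}<\infty$; combined with Proposition~\ref{prop:bound1} this yields $\sup_{N\in\N^*}\EE\bigl[\sup_{t\le T}\|u^N_t-u_t\|_{\beta,\rKs}^m\bigr]<\infty$. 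Next I would apply the Gagliardo--Nirenberg interpolation inequality for Bessel potential spaces to $f=u^N_t-u_t$, so that, with $\theta$ the corresponding interpolation weight on the $\Lsolsing$-endpoint, $\|f\|_{\gamma,\rKs-\delta}\le C\,\|f\|_{\Lsolsing}^{\,\theta}\,\|f\|_{\beta,\rKs}^{\,1-\theta}$. Taking $\sup_{t\in[0,T]}$ (so that the product of suprema dominates the supremum of the product), then applying Hölder's inequality on $\Omega$ with the conjugate exponents $\beta/\gamma$ and $\beta/(\beta-\gamma)$ to separate the two factors, and finally bounding the $\Lsolsing$-factor by Theorem~\ref{th:rate2sing} and the $H^\beta_{\rKs}$-factor by the uniform estimate above, yields an estimate of the announced form, with the $\Lsolsing$-convergence appearing to the interpolation power $\theta$.

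The step I expect to be the main obstacle is the first one: it requires Proposition~\ref{prop:bound1} in the strong form $\sup_{N}\EE\bigl[\sup_{t\le T}\|u^N_t\|_{\beta,\rKs}^m\bigr]<\infty$, with the time-supremum \emph{inside} the expectation, together with the companion parabolic-regularity statement $u\in\Ccal([0,T];H^\beta_{\rKs})$ for the PDE started from $H^\beta_{\rKs}$ data --- both slightly stronger than what is strictly needed in the proof of Theorem~\ref{th:rate2sing}. The only other point requiring care is the admissibility of the interpolation inequality at the $L^1$ endpoint (an exceptional case of Gagliardo--Nirenberg), which can be handled by interpolating through $L^{1+\eta}$ for an arbitrarily small $\eta>0$, at the cost of an arbitrarily small loss in $\gamma$ that is absorbed into the $\varepsilon$.
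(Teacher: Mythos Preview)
Your proposal is essentially the paper's own argument: interpolate the Bessel norm $\|\cdot\|_{\gamma,\rKs-\delta}$ between $\|\cdot\|_{0,1}$ and $\|\cdot\|_{\beta,\rKs}$ (the paper cites \cite[p.185]{Triebel} directly for Bessel potential spaces, so the $L^1$ endpoint is not an obstacle here), apply H\"older on $\Omega$, and bound the low-regularity factor by Theorem~\ref{th:rate2sing} and the $H^\beta_{\rKs}$ factor by Proposition~\ref{prop:bound1} together with the deterministic bound on $u$. Your flag that this requires Proposition~\ref{prop:bound1} with the time-supremum \emph{inside} the expectation is a genuine subtlety that the paper's proof also passes over.
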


\begin{remark}
It is clear that $\gamma<\beta$. It will also be important (in particular for the propagation of chaos in the Section \ref{subsec:chaossing}) to ensure that $\gamma>\frac{d}{\rKs-\delta}$ so as to have an embedding in a space of H\"older continuous functions. This is indeed the case if $\delta$ is chosen small enough, see condition \eqref{eq:conddelta}.

In addition, we get by the classical embedding recalled in \eqref{eq:GeneralMorrey} that the result of Corollary \ref{cor:forpropchaossing} imply the same rates in $\eta$-H\"older norm, with $\eta = \gamma - \frac{d}{\rKs-\delta}$, provided that this quantity is positive (see condition \eqref{eq:conddelta}).
\end{remark}

Observing that a rate of convergence was obtained in $H^\gamma_{\rKs-\delta}$, one could wonder if the convergence also happens in $H^\beta_{\rKs}$. Corollary \ref{cor:weakConv} answers positively, thus extending the main convergence results in \cite{FlandoliOliveraSimon,ORT} to general kernels.
\begin{corollary}\label{cor:weakConv}
 Let the same assumptions as in Corollary~\ref{cor:forpropchaossing} hold, with $\gamma, \delta$ as in Corollary \ref{cor:forpropchaossing}. Let $m\geq 1$ and assume further that
\begin{align}\label{eq:ICconvergence}
\bigg\|\|u^N_0- u_0\|_{\Lsolsing} \bigg\|_{L^m(\Omega)} \longrightarrow 0 .
\end{align}
 Then the sequence of mollified empirical measures ${\{u^N_{t} , ~t\in[0,T]\}_{N\in\N}}$ converges in probability, as $N\to \infty$, towards the unique mild solution $u$ on $[0,T]$ of the PDE \eqref{eq:PDE}, in the following sense: 
 \begin{equation*}
 \forall \varphi \in L^2\left([0,T]; H_{\rKs^{\prime}}^{-\beta}(\R^d)\right), \quad \int_{0}^{T} \langle u^N_{t},\varphi_{t}\rangle_{\beta} \, dt \overset{\PP}{\longrightarrow} \int_{0}^{T} \langle u_{t},\varphi_{t}\rangle_{\beta} \, dt,
 \end{equation*}
 where $\rKs' = \frac{\rKs}{\rKs-1}$ is the conjugate exponent of $\rKs$.
\end{corollary}
The proof is given in Section \ref{subsec:proofConv}. Theorem \ref{th:rate2sing} and Corollary \ref{cor:weakConv} cover the convergence results for the mollified empirical measures obtained in a case by case basis in \cite{FlandoliOliveraSimon,ORT}.  We present here an alternative approach to the one presented initially in \cite{FlandoliLeimbachOlivera} to prove such convergence which, in addition, enables us to quantify it. We believe that this new approach could lead to rates of convergence in slightly different models such as those presented in  \cite{FlandoliLeimbachOlivera,FlandoliLeocataRicci0,FlandoliLeocata}.

~

We also notice without giving the full statement, that as in Corollary \ref{cor:wo-cutoff}, a rate of convergence can be obtained for the particle system without cutoff.

~

Finally, it remains to give the results about the propagation of chaos in the context of the hypothesis \eqref{HKsing}. \\
Firstly, one must redefine the martingale problem \eqref{MP}. Let us denote by (\customlabel{MP'}{$\mathcal{\widetilde{MP}}$}) a martingale problem analogous to \eqref{MP} except that in point $(ii)$ we replace $\Lsol$ by $\Lsolsing$. 

Then, adapt the arguments of Proof of Proposition \ref{prop:wpMP} do this functional setting by using the well-posedness of the mild solution given by Proposition \ref{prop:local_existence_sing} and Lemma \ref{lem:unifbound} (with $\rK=\infty$ as mentioned above). This yields the following result:
\begin{proposition}\label{prop:wpMPsingular}
Let $T< T_{max}$. Assume that $u_0$ is a probability density function belonging to $L^\infty(\R^d)$ and that the kernel $K$ satisfies \eqref{HKsing}. 
Then, the martingale problem \eqref{MP'} 
admits a unique solution.
\end{proposition}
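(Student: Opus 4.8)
The plan is to follow, essentially word for word, the four-step scheme of the proof of Proposition~\ref{prop:wpMP}, replacing $\Lsol$ throughout by $\Lsolsing$, using Proposition~\ref{prop:local_existence_sing} in place of Proposition~\ref{prop:local_existence}, and invoking Lemma~\ref{lem:unifbound} with the choice $\rK=\infty$. The latter is legitimate: by \eqref{HK1sing} and \eqref{HK2sing} we have $K\in L^{1}(\mathcal{B}_{1})$ and $K\in L^{\qK}(\mathcal{B}_{1}^{c})$, so H\"older's inequality together with the interpolation $\|f\|_{L^{\qK'}(\R^d)}\lesssim\|f\|_{\Lsolsing}$ gives $\|K\ast f\|_{L^{\infty}(\R^d)}\leq C_{K,d}\,\|f\|_{\Lsolsing}$ for every $f\in\Lsolsing$. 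Let me stress that Assumption~\eqref{HK3sing} plays no role here: for the well-posedness of \eqref{MP'} one only uses the boundedness of the drift $x\mapsto K\ast q_{s}(x)$, not its H\"older regularity.

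First I would show that the one-dimensional time marginals of any solution $\Q$ of \eqref{MP'} are uniquely determined. Their family $(q_{t})_{t\leq T}$ belongs to $\Ccal([0,T];\Lsolsing)$ by definition of \eqref{MP'}, so by the observation above the drift $K\ast q_{s}$ is bounded; then the standard computation (as in \cite[Section~4]{TalayTomasevic}, which uses only boundedness of the drift) shows that $(q_{t})$ satisfies the mild equation \eqref{eq:mildKS}. By Proposition~\ref{prop:local_existence_sing} this equation has a unique solution in $\Ccal([0,T];\Lsolsing)$ on $[0,T]$, hence $q=u$, the unique mild solution of \eqref{eq:PDE}. Next I would freeze $q=u$ in \eqref{MP'}-$iii)$ to obtain a linear martingale problem; since $s\mapsto K\ast u_{s}$ is bounded and measurable, Girsanov's theorem provides a weak solution, unique in law, to $Y_{t}=X_{0}+\sqrt{2}\,W_{t}+\int_{0}^{t}(K\ast u_{s})(Y_{s})\,ds$, whose time-marginals $\PP_{t}(dx)=p_{t}(x)\,dx$ are absolutely continuous with bounded density.

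Uniqueness for \eqref{MP'} then follows, since any of its solutions is a solution of the linear problem, which has a unique one. For existence I would check that $\PP:=\mathcal{L}(Y)$ actually solves \eqref{MP'}, i.e.\ that the frozen drift coincides with the true one: deriving the mild equation $p_{t}=e^{t\Delta}u_{0}-\int_{0}^{t}\nabla\cdot e^{(t-s)\Delta}(p_{s}\,K\ast u_{s})\,ds$, which is a linearisation of \eqref{eq:mildKS} with (by the arguments of Proposition~\ref{prop:local_existence_sing}) a unique solution in $\Ccal([0,T];\Lsolsing)$, and observing that $u$ solves it too, forces $p=u$. To make this rigorous one must know beforehand that $p\in\Ccal([0,T];\Lsolsing)$; since the $p_{t}$ are probability densities, this reduces to $p\in\Ccal([0,T];L^{\infty}(\R^d))$, which I would obtain from $\|\nabla e^{t\Delta}\|_{L^{\infty}\to L^{\infty}}\leq Ct^{-1/2}$ (the $p=\infty$ case of \eqref{eq:heat-op-norm}) and Lemma~\ref{lem:unifbound} with $\rK=\infty$, yielding $\|p_{t}\|_{L^{\infty}(\R^d)}\leq\|u_{0}\|_{L^{\infty}(\R^d)}+C\,\|u\|_{T,\Lsolsing}\int_{0}^{t}(t-s)^{-1/2}\|p_{s}\|_{L^{\infty}(\R^d)}\,ds$ and then a singular (fractional) Gr\"onwall estimate, together with the same bound applied to $p_{t}-p_{t'}$ for continuity.

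The hard part, exactly as in the proof of Proposition~\ref{prop:wpMP}, is this last step: the a priori $\Ccal([0,T];L^{\infty}(\R^d))$ regularity of the marginal densities of the linearised process, where the time-singular factor $(t-s)^{-1/2}$ produced by $\nabla\cdot e^{(t-s)\Delta}$ must be absorbed into a generalised Gr\"onwall argument, the only available control on $K\ast u_{s}$ coming through $u\in\Ccal([0,T];\Lsolsing)$ and Lemma~\ref{lem:unifbound}. The single new point compared to the non-singular setting is that \eqref{HK1sing} only furnishes $K\in L^{1}(\mathcal{B}_{1})$, so one has to verify that $\rK=\infty$ is an admissible exponent in Lemma~\ref{lem:unifbound} --- which it is, as noted at the outset --- after which all the estimates of the proof of Proposition~\ref{prop:wpMP} go through verbatim.
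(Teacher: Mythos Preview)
Your proposal is correct and matches the paper's own approach exactly: the paper states only that one should ``adapt the arguments of Proof of Proposition~\ref{prop:wpMP} [to] this functional setting by using the well-posedness of the mild solution given by Proposition~\ref{prop:local_existence_sing} and Lemma~\ref{lem:unifbound} (with $\rK=\infty$),'' and you have spelled out precisely this adaptation. Your remark that Assumption~\eqref{HK3sing} is not needed here is also accurate and worth noting.
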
 
Now, we are in the position to state the propagation of chaos result. 
\begin{theorem}
\label{th:propagation-of-chaos-sing}
Let the hypotheses of Theorem \ref{th:rate2sing} hold. Assume further that the family of random variables $\{X^i_{0},~i\in\N\}$ is identically distributed, that $u_{0}\in H_{\rKs}^\beta(\mathbb{R}^d)$ and that $\langle u^N_0, \varphi\rangle  \to \langle u_{0}, \varphi\rangle $ in probability, for any $\varphi \in \mathcal{C}_{b}(\R^d)$. Then, the empirical measure $\mu^N_.$  converges in probability towards $\Q$, which is the law of the  unique weak solution of \eqref{eq:McKeanVlasov}. 
\end{theorem}
The scheme of proof is the same as for Theorem \ref{th:propagation-of-chaos}, but there are some additional technical difficulties that need to be adressed. Hence we will give the sketch of the proof in Section~\ref{subsec:chaossing}.

\subsection{Rate in Sobolev norm: Proof of Corollary \ref{cor:forpropchaossing}}\label{subsec:proofCor}

This result relies on an interpolation inequality for Bessel potential spaces, and our previous results of rate of convergence.

Let us establish first the interpolation inequality that we shall use: let $\delta\in(0,1)$ and $\gamma$ such that
\begin{align}\label{eq:gammadelta}
\gamma = \beta \frac{\rKs(\rKs-\delta-1)}{(\rKs-\delta)(\rKs-1)}.
\end{align}
The interpolation theorem for Bessel potential spaces, see \cite[p.185]{Triebel}, gives that for any $f\in H^0_{1}\cap H^\beta_{\rKs}(\R^d)\,  (\equiv L^1\cap H^\beta_{\rKs}(\R^d))$,
\begin{align}\label{eq:interpolationBessel}
 \| f \|_{\gamma,\rKs-\delta}  \leq  \| f \|_{0,1}^{\theta} \,  \| f \|_{\beta,\rKs}^{1-\theta} ,
\end{align}
where $\theta = \frac{\gamma}{\beta}$.

Hence it follows from \eqref{eq:interpolationBessel} that for any $m\geq 1$,
\begin{align*}
\EE \sup_{s\in[0,T]} \| u^N_s-u_{s}\|_{\gamma,\rKs-\delta}^m  \leq  \EE \sup_{s\in[0,T]} \| u^N_s-u_{s}\|_{0,1}^{\theta m} \,  \| u^N_s-u_{s}\|_{\beta,\rKs}^{(1-\theta) m} ,
\end{align*}
and we deduce from H\"older's inequality that
\begin{align*}
 \EE \sup_{s\in[0,T]} \| u^N_s-u_{s}\|_{\gamma,\rKs-\delta}^m \leq  \left( \EE \sup_{s\in[0,T]} \| u^N_s-u_{s}\|_{L^1(\R^d)}^m \right)^{\theta}  \left( \EE \sup_{s\in[0,T]} \| u^N_s-u_{s}\|_{\beta,\rKs}^m  \right)^{1-\theta}. 
\end{align*}
In view of the previous inequality and using Theorem \ref{th:rate2sing}, Proposition \ref{regularity} and Proposition \ref{prop:bound1}, we deduce the rate of convergence in $L^m \left(\Omega; L^{\infty}([0,T], H_{\rKs-\delta}^{\gamma}(\R^d))\right)$. 

Finally, note that it is always true that $\gamma<\beta$. Besides, it will be important to ensure that $\gamma>\frac{d}{\rKs-\delta}$ to have an embedding in the space of H\"older continuous functions (see \eqref{eq:GeneralMorrey}). For this, it suffices to choose $\delta$ which satisfies:
\begin{align}\label{eq:conddelta}
\frac{\rKs-\delta-1}{\rKs-1} > \frac{d/\rKs}{\beta}.
\end{align}

\subsection{Convergence in $H_{ \texorpdfstring{\rKs}{r}}^{\beta}$: Proof of Corollary  \ref{cor:weakConv}}
\label{subsec:proofConv}

Let us introduce the space 
\begin{equation*}
\mathcal{X} = L^2 \left([0,T]; H_{\rKs}^{\beta}(\R^d)\right)
\end{equation*}
with the strong topology, and denote by 
 $\mathcal{X}_{w}$ the same space endowed with the weak topology. Notice first that $L^{2}\big( [0,T]; H_{\rKs}^{\beta}(\R^d) \big)$ is a reflexive Banach space for $1<\rKs<\infty$ (see \cite[p.198-199]{Triebel}). Hence by the Banach-Alaoglu theorem, it is compactly embedded in $\mathcal{X}_{w}$.  

Now the Chebyshev inequality ensures that 
\[
\PP\big( \|u^N \|_{\mathcal{X}}^{2} > R\big)\leq \frac{\EE \big[ \big\| u^N\big\|_{\mathcal{X}}^{2}\big]}{R}  \leq \frac{\sup_{t\in [0,T]}  \EE \big[ \big\| u_{t}^N\big\|_{ \beta,\rKs }^{2}\big]}{R}, \quad \text{for any } R>0.
\]
Thus by Proposition \ref{prop:bound1}, we obtain 
\[
\PP\big( \big\|u^N \big\|_{\mathcal{X}}^{2} > R\big)\leq \frac{C}{R}, \quad \text{for any } R>0 ~\text{ and } ~N \in \N.
\]
Let $\PP_{N}$ be the law of $u^N$ in $\mathcal{X}$. The last inequality implies that for any $\epsilon>0$, there exists a bounded set $B_{\epsilon}\in \mathcal{X}$ 
such that $\PP_{N}(B_{\epsilon})< 1-\epsilon$ for all $N$, and therefore by the preliminary remark, there exists a compact set $
\mathcal{K}_{\epsilon}\in \mathcal{X}_{w}$ such  that $\PP_{N}(\mathcal{K}_{\epsilon})< 1-\epsilon$. That is, $(\PP_N)_{N\geq1}$ is tight on $\mathcal{X}_{w}$.

Here, we cannot apply the usual Prokhorov Theorem, since $\mathcal{X}_{w}$ is not metrisable. However, $\mathcal{X}$ is reflexive and $\mathcal{X}'$ is separable, therefore any weak compact set in $\mathcal{X}_{w}$ is metrisable (see e.g. \cite[Thm~3.16]{RudinFA}). In addition, $\mathcal{X}_{w}$ is completely regular in the sense of \cite[Def. 6.1.2]{BogachevII} (as is any Hausdorff topological vector space). Hence, by Theorem 8.6.7 of \cite{BogachevII}, there exists a subsequence of probability measures $\PP_{\epsilon_{N}}$ that converges weakly to some $\PP_{\infty}$ on $\mathcal{X}_{w}$. In particular, if $F\in \mathcal{C}^0_{b}(\R)$ and $\varphi\in L^2 \left([0,T]; H_{\rKs^{\prime}}^{-\beta}(\R^d)\right)$, we get that
\begin{equation*}
\EE F \left(\langle u^{\epsilon_{N}},\varphi \rangle \right) \rightarrow \EE F \left(\langle u^{\infty},\varphi \rangle \right) ,
\end{equation*}
for some $\mathcal{X}_{w}$-valued random variable $u^\infty$ with law $\PP_{\infty}$.

On the other hand, from Theorem \ref{th:rate2sing} with assumption \eqref{eq:ICconvergence}, we know that 
$u^N$ converges to $u$ in $L^m \left(\Omega;\, L^{\infty}( [0,T], \Lsolsing)\right)$. Hence $u^N$ converges almost surely to $u$ in $L^{\infty}( [0,T], \Lsolsing)$, up to a subsequence. Without loss of generality, we still denote by $u^N$ this subsequence. By testing against any function $\varphi\in \mathcal{C}^\infty \left([0,T]\times \R^d\right)$, we deduce that ${\EE F \left(\langle u^{N},\varphi \rangle \right) \rightarrow F \left(\langle u,\varphi \rangle \right)}$ for any $F\in \mathcal{C}^0_{b}(\R)$. Thus by uniqueness of the limit, we get that $\EE F \left(\langle u^{\infty},\varphi \rangle \right) = F \left(\langle u,\varphi \rangle \right)$.
 Since $\mathcal{C}^\infty \left([0,T]\times \R^d\right)$ is dense in $L^2 \left([0,T]; H_{\rKs^{\prime}}^{-\beta}(\R^d)\right)$, it follows that $\PP_{\infty}$ is a Dirac measure at $u$.

We have thus obtained that any limit point of $(\PP_{N})_{N\geq1}$ is the Dirac measure at $u$, therefore $(u^{N})_{N\geq1}$ converges in law to $u$ in $\mathcal{X}_{w}$. Since the limit is deterministic, the convergence also holds in probability, in the sense of Corollary \ref{cor:weakConv}.

\subsection{Sketch of the proof of Theorem \ref{th:propagation-of-chaos-sing}}

\label{subsec:chaossing}
We will show that $\mu^N$ converges to the unique solution $\Q$ of the martingale problem \eqref{MP'}. 
The proof closely follows the proof of Theorem \ref{th:propagation-of-chaos} (see Section \ref{subsec:proofPropChaos}) in a new functional framework. Namely, we redefine the space $\mathcal{Y}$ (see \eqref{eq:defY}) in the following way:
$${  \mathcal{Y}}=  \mathcal{C}([0,T]; L^1(\R^d)) \cap \mathcal{X}_{w} .$$
Recall from Section \ref{subsec:proofConv} that $\mathcal{X}_{w}$ is the space  $L^2([0,T]; H^\beta_{\rKs}(\R^d))$ endowed with the weak topology. This choice of a new functional framework will be explained at the end of this section.

Adopting the notation of Section \ref{subsec:proofPropChaos}, the sequence $(\tilde{\Pi}^N, N\geq 1)$  
is tight. Indeed, the convergence of Theorem \ref{th:rate2sing} gives the tightness in $\mathcal{C}([0,T]; L^1(\R^d))$ and the discussion in Section \ref{subsec:proofConv} gives the tightness in $\mathcal{X}_{w}$. Besides, despite the fact that $\mathcal{X}_{w}$ is not metrisable, we explained in Section \ref{subsec:proofConv} why it is still possible to extract a converging subsequence out of $(\tilde{\Pi}^N, N\geq 1)$.

Now repeat the arguments that  follow line by line to obtain that an analogue of Lemma \ref{lemma:chaos-marginals} holds. It remains to obtain the analogue of Proposition \ref{prop:chaos}. Step 1 remains unchanged. In Step 2, when  dealing with the term $I_n$ one must do the following: 
In view of Lemma~\ref{lem:unifbound} (with $\rK=\infty$ now), one has 
\begin{align*}
\|K \ast (\mathbf{u}^n_s - \mathbf{u}_s) \|_{L^\infty(\R^d)} 
& \leq  C_{K,d} \|\mathbf{u}^n_s - \mathbf{u}_s\|_{\Lsolsing}.
\end{align*}
Now recall $\rKs$ and $\beta$ are fixed in \eqref{HK3sing}, and let $\gamma$ and $\delta$ satisfy \eqref{eq:gammadelta} and \eqref{eq:conddelta}, so that $\frac{d}{\rKs-\delta} <\gamma<\beta$. Then, use 
the Sobolev embedding $L^1\cap H_{\rKs-\varepsilon}^{\gamma}(\R^d) \subset \Lsolsing$ to get
$$\|K \ast (\mathbf{u}^n_s - \mathbf{u}_s) \|_{L^\infty(\R^d)} \leq C_{K,d} \|\mathbf{u}^n_s - \mathbf{u}_s\|_{ L^1\cap H_{\rKs-\varepsilon}^{\gamma}(\R^d)}.$$
Plug the latter in \eqref{eq:proofH1} to obtain
$$I_n \leq  C_{K,d}\int_s^t \|\mathbf{u}^n_\sigma - \mathbf{u}_\sigma\|_{ L^1\cap H_{\rK-\varepsilon}^{\gamma}(\R^d)} \, d\sigma. $$
By the interpolation inequality \eqref{eq:interpolationBessel}, 
\begin{align*}
\|\mathbf{u}^n_s - \mathbf{u}_s\|_{ L^1\cap H_{\rKs-\varepsilon}^{\gamma}(\R^d)} \leq \|\mathbf{u}^n_s - \mathbf{u}_s\|_{ L^1(\R^d)} + \|\mathbf{u}^n_s - \mathbf{u}_s\|_{ L^1(\R^d)}^\theta \, \|\mathbf{u}^n_s - \mathbf{u}_s\|_{ H_{\rKs}^{\beta}(\R^d)}^{1-\theta} ,
\end{align*}
for $\theta$ as in Section \ref{subsec:proofCor}. 
Now since $\mathbf{u}^n$ converges in $\mathcal{Y}$, and converges in particular weakly in $L^2 \left([0,T],H^\beta_{\rKs}(\R^d)\right)$, the uniform boundedness principle tells us that it is bounded in this space. Gathering this fact with the convergence in $\mathcal{C}([0,T]; L^1(\R^d))$ (by assumption), the previous inequality yields the convergence of $\mathbf{u}^n$ in $L^2 \left([0,T],L^1\cap H_{\rKs-\varepsilon}^{\gamma}(\R^d)\right)$. Hence $I_{n}$ converges to $0$.

For $II_n$ the arguments remains the same using the hypothesis \eqref{HK3sing} instead of \eqref{HK3} and having in mind that as $\mathbf{u}^n$ converges in $\mathcal{Y}$, $\mathbf{u}^n_t$ is uniformly bounded w.r.t. $n$ in $\Lsolsing$.  Then, the proof is finished. 

Here is a good place to explain why we could not define $\mathcal{Y}$ to be just $\mathcal{C}([0,T];\Lsolsing)$. Indeed, the computations for $I_n$ would have been straightforward in that case (as in the original proof). However,  this space is incompatible with the assumption \eqref{HK3sing}. Hence, when dealing with $II_n$, nothing would guarantee that $K\ast \mathbf{u}^n_t$ is continuous if $\mathbf{u}$ was in $\mathcal{C}([0,T];\Lsolsing)$. To ensure the latter, we must be in the position to apply  \eqref{HK3sing}. That is $\mathbf{u}^n_t$ must be in $L^{1}\cap H_{\rKs}^{\beta}(\R^d)$.  The reason we took the space $\mathcal{X}_w$ and not directly a space of type $L^2([0,T]; H^\beta_{\rKs}(\R^d))$ is that, by the proof of Corollary \ref{cor:weakConv}, we only have the convergence of $u^N$ in $\mathcal{X}_w$ and not in the strong topology of $L^2([0,T]; H^\beta_{\rKs}(\R^d))$. Hence, we would not have the tightness of $(\tilde{\Pi}^N, N\geq 1)$ in such space.

\section{Examples}\label{sec:examples}
In this section we first focus on the set of assumptions \eqref{HK} and we present a sufficient condition for them to be satisfied that will be used in practice. Then, we turn to specific singular kernels of Riesz type and discuss how our results can be applied to them. 

\subsection{A stronger, easier-to-check condition on the kernel}\label{app:CZ}
\label{sec:appendixKernel}
The first two points of Assumption \eqref{HK} are simple technical conditions and may not require specific comments, except that it would be interesting to lift the first integrability condition in order to be able to consider more singular kernels. The third assumption is much more interesting. We give with the following lemma an easier-to-check condition that in practice replaces \eqref{HK3}.
\begin{lemma} Assume that $K$ satisfies \eqref{HK1} and \eqref{HK2}. Assume further that \\
\noindent \begin{minipage}{0.06\linewidth}
\end{minipage}
\hspace{-0.5cm}
\begin{minipage}[t]{0.94\linewidth}
\raggedright
 \hspace{0.8cm} 
\begin{enumerate}[label= ]
\item(\customlabel{HK3'}{$\HypKKK_{iii}$}) ~ There exists  $\rK \geq \max(\pK',\qK')$ and $z\in [\pK\vee \qK,+\infty]\cap (d,+\infty]$ such that the matrix-valued kernel $\nabla K$ defines a convolution operator which is bounded component-wise from $\Lsol$ to $L^z(\R^d)$.
\end{enumerate}
\end{minipage}
\vspace{0.3cm}

Then, $K$ satisfies \eqref{HK3} with the same parameter $\rK$ and with $\Hreg = 1-\tfrac{d}{z}$.
\end{lemma}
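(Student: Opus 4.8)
The plan is to deduce this from the classical Morrey embedding. Fix $f\in\Lsol$. By Lemma~\ref{lem:unifbound} the convolution $K\ast f$ is a well-defined bounded function, hence a tempered distribution whose distributional gradient $\nabla(K\ast f)$ makes sense. The key point is that $\nabla(K\ast f)=(\nabla K)\ast f$, so by hypothesis \eqref{HK3'} one has $\nabla(K\ast f)\in L^z(\R^d)$ with
\[
\|\nabla(K\ast f)\|_{L^z(\R^d)}\leq C\,\|f\|_{\Lsol},
\]
the constant being the (component-wise) operator norm from \eqref{HK3'}. Since $z\in(d,+\infty]$, Morrey's inequality will then give that $K\ast f$ admits a Hölder continuous representative with $\mathcal N_{1-\frac dz}(K\ast f)\lesssim\|\nabla(K\ast f)\|_{L^z(\R^d)}$, and combining the two displays yields exactly \eqref{HK3} with $\Hreg=1-\frac dz\in(0,1]$ and the same parameter $\rK$.

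First I would justify the identity $\nabla(K\ast f)=(\nabla K)\ast f$. For smooth $g$ one has $\partial_i(K_j\ast g)=(\partial_i K_j)\ast g$ by differentiating under the convolution, and the general case follows by mollification: with $f_\varepsilon=f\ast\rho_\varepsilon$ one has $f_\varepsilon\to f$ in $\Lsol$, hence $K\ast f_\varepsilon\to K\ast f$ uniformly by Lemma~\ref{lem:unifbound} and $(\nabla K)\ast f_\varepsilon\to(\nabla K)\ast f$ in $L^z(\R^d)$ by \eqref{HK3'}, so passing to the limit in the weak formulation $-\int(K_j\ast f_\varepsilon)\,\partial_i\varphi=\int((\partial_i K_j)\ast f_\varepsilon)\,\varphi$ gives the identity in $\mathcal D'(\R^d)$. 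In particular $K\ast f\in W^{1,z}_{\mathrm{loc}}(\R^d)$ with gradient in $L^z(\R^d)$.

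Then I would invoke Morrey's inequality in its quantitative local form: for $x\neq y$ and $B$ a ball of radius $2|x-y|$ containing both points, $|g(x)-g(y)|\le C_{d,z}\,|x-y|^{1-d/z}\|\nabla g\|_{L^z(B)}$, applied to the continuous representative of $g=K\ast f$, which agrees a.e.\ with the bounded function from Lemma~\ref{lem:unifbound}. Taking the supremum over $x\neq y$ and bounding $\|\nabla g\|_{L^z(B)}\le\|\nabla g\|_{L^z(\R^d)}$ gives $\mathcal N_{1-d/z}(K\ast f)\le C_{d,z}\,\|\nabla(K\ast f)\|_{L^z(\R^d)}$, which together with the first step closes the proof. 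The case $z=+\infty$ is the elementary Lipschitz case ($\Hreg=1$).

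I do not expect a serious obstacle; the only delicate point is the distributional identity of the first step, where one must be careful because $K$ is merely locally integrable with prescribed behaviour at infinity — it is precisely the boundedness of $K\ast\cdot$ (Lemma~\ref{lem:unifbound}) and of $(\nabla K)\ast\cdot$ (hypothesis \eqref{HK3'}) on $\Lsol$ that makes the mollification limit legitimate. Note that the requirement $z\ge\pK\vee\qK$ plays no role in this lemma (it is imposed only for consistency with the rest of Assumption \eqref{HK}); solely $z>d$ is used.
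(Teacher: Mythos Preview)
Your proof is correct and follows essentially the same route as the paper: both arguments reduce to Morrey's inequality after establishing that $\nabla(K\ast f)\in L^z(\R^d)$ via hypothesis \eqref{HK3'}. The paper additionally proves $K\ast f\in L^z(\R^d)$ (via Young's inequality and the decomposition $K=K\mathbbm{1}_{\mathcal B_1}+K\mathbbm{1}_{\mathcal B_1^c}$, which is where the condition $z\ge\pK\vee\qK$ enters) so as to obtain the full membership $K\ast f\in H^1_z(\R^d)$ before invoking Morrey in the global form $\mathcal N_\eta(K\ast f)\le C\|K\ast f\|_{H^1_z}$; you instead use the local form of Morrey, which bounds the H\"older seminorm by $\|\nabla(K\ast f)\|_{L^z}$ alone, and thereby sidestep the $L^z$ bound on $K\ast f$ itself. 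Your observation that $z\ge\pK\vee\qK$ is not actually needed for this lemma is thus accurate --- the paper's proof uses it, but only because of the particular formulation of Morrey it quotes.
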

\begin{proof}
First, we will make use of \eqref{HK1} and \eqref{HK2}.  
Young's convolution inequality states that for $\pK_{z}' := (1+\frac{1}{z}-\frac{1}{\pK})^{-1}$ and $\qK_{z}' := (1+\frac{1}{z}-\frac{1}{\qK})^{-1}$,  for any $f\in L^{\pK'_{z}}\cap L^{\qK'_{z}}(\R^d)$ we have
\begin{align*}
\|K \ast f \|_{L^z(\R^d)} &\leq \left\|\left(\mathbbm{1}_{\mathcal{B}_{1}}K \right) \ast f \right\|_{L^z(\R^d)} + \left\|\left(\mathbbm{1}_{\mathcal{B}_{1}^c} K\right) \ast f \right\|_{L^z(\R^d)} \\ 
&\leq \left\|\mathbbm{1}_{\mathcal{B}_{1}} K\right\|_{L^{\pK}(\R^d)} \|f\|_{L^{\pK'_{z}}(\R^d)} + \left\|\mathbbm{1}_{\mathcal{B}_{1}^c} K \right\|_{L^{\qK}(\R^d)} \|f\|_{L^{\qK'_{z}}(\R^d)}\\
&\leq C_{K} \|f\|_{L^{\pK'_{z}} \cap L^{\qK'_{z}}(\R^d)} .
\end{align*}
In particular the previous inequality holds true if $f\in \Lsol$, because $\pK'_{z} \leq \pK' \leq \rK$ (and similarly $\qK'_{z}\leq \rK$) and then by interpolation, $f$ is in $L^{\pK'_{z}} \cap L^{\qK'_{z}}(\R^d)$.
 Now in view of the previous fact and using the property \eqref{HK3'} of $\nabla K$, one deduces that if $f\in \Lsol$, then $K \ast f \in H^1_{z}(\R^d)$. Hence it follows from Morrey's inequality \cite[Th. 9.12]{Brezis} that there exists $C_{z,d}>0$ such that for any $f\in \Lsol$,
\begin{align*}
\mathcal{N}_{\eta}\left( K \ast f\right) &\leq  C  \| K\ast f\|_{H_{z}^{1}(\R^d)}  \nonumber\\
&\leq C_{z,d,K}  \|f\|_{\Lsol} ,
\end{align*}
where $\eta = 1-\tfrac{d}{z}$. Hence the desired result.
\end{proof}
A simple example that satisfies \eqref{HK3'} is a kernel $K$ such that $\nabla K$ is integrable. Then $\nabla K$ defines a convolution operator and by a convolution inequality, this operator is bounded in any $L^z(\R^d),~z\in[1,+\infty]$. Hence, by interpolation inequality, if $\rK\geq z$, one has 
$$\|\nabla K \ast f\|_{L^z(\R^d)} \leq C \|f\|_{L^z(\R^d)}\leq C \|f\|_{L^1\cap L^\rK(\R^d)}.$$
 As a consequence, $\nabla K$ satisfies \eqref{HK3'} for any $z\in [\pK\vee \qK,+\infty]\cap (d,+\infty]$ and any $\rK \geq \max(\pK',\qK',z)$.
 Hence it satisfies \eqref{HK3} with $\Hreg = 1-\frac{d}{z}$ and $\rK = \max(\pK',\qK',z)$.
 
 Nevertheless, for kernels $K$ such that $\nabla K$ is not integrable, it may be possible to define the convolution operator of kernel $\nabla K$ as the Principal Value integral acting on the space of smooth, rapidly decaying functions (i.e. the Schwartz space), thus defining a tempered distribution. In the next section we will see some interesting examples of such kernels.

\subsection{Riesz potentials}\label{subsec:RieszKernel}

In their general form, the Riesz potentials were defined in \eqref{eq:defRiesz}. 
 We denote the associated kernel by $K_{s}:=\pm  \nabla V_{s}$.  $K_{s}$ satisfies Assumption \eqref{HK}, provided that $d\geq 2$ and $s\in[0 , d-1)$: Indeed, in the order of the hypotheses in \eqref{HK}, we have that
\begin{itemize}
\item $K_{s}$ $\in L^1(\mathcal{B}_1)$ if and only if $0\leq s<d-1$. Moreover, $K_{s}\in L^p(\mathcal{B}_{1})$ for any $p<\tfrac{d}{s+1}$. Then one can choose $\pK = \left(\frac{d}{s+1}\right)^-$. 
\item $K_{s}\in L^q(\mathcal{B}_{1}^c)$ for any $q>\tfrac{d}{s+1}$, so one can choose $\qK = \left(\frac{d}{s+1}\right)^+$. 
\item 
\begin{itemize}
\item If $d\geq3$ and $s<d-2$, then $\nabla K_{s}$ is not bounded in any $L^p$ but it is  bounded from $L^{\tilde{z}}$ to $L^{z}$ whenever $\tilde{z}\in(1,\frac{d}{d-(s+2)})$ and $\tfrac{1}{z} = \tfrac{1}{\tilde{z}} + \tfrac{s+2}{d}-1$ (see \cite[Theorem 25.2]{SKM}). By letting $\tilde{z}$ be close enough to $\frac{d}{d(s+2)}$, one can always choose $z\geq \pK\vee \qK$ as large as desired, in particular one can find $z\geq \pK\vee \qK$, $z>d$ and $\rK \geq \max(\tilde{z},\pK',\qK')$
 to ensure property \eqref{HK3'} is verified. 
  Then \eqref{HK3} holds for $\Hreg = 1-\frac{d}{z}$ (see Section \ref{app:CZ}). Hence all our results can be applied to this kernel.

\item If $s=d-2$, then $\nabla K_{s}$ is a typical kernel satisfying the conditions of \cite[Chapter 4.4]{Grafakos}, and therefore it defines a bounded operator in any $L^z(\R^d),~z\in (1,\infty)$. Hence all our results apply to this particular kernel (see the previous discussion in Section \ref{app:CZ}). The choice of parameters will then be $\pK = \left(\frac{d}{d-1}\right)^-$, $\qK = \left(\frac{d}{d-1}\right)^+$, $\rK = z$ and $\Hreg = 1-\frac{d}{z}$, for some $z$ to be chosen in $(d,+\infty)$.

\item If $s\in [d-2,d-1)$, then one can verify (see e.g. \cite[Lemma 2.5]{Duerinckx}) that $\nabla K_{s}$ defines a convolution operator from $L^1\cap L^\infty\cap  \mathcal{C}^\sigma(\R^d)$ to $L^\infty(\R^d)$, with $\sigma\in (2-d+s,1)$:
\begin{align*}
 \|\nabla K_{s} \ast f\|_{L^\infty(\R^d)} &\lesssim \|f\|_{L^1(\R^d)} +\|f\|_{L^\infty(\R^d)} + \mathcal{N}_{\sigma}(f) \\
 &\lesssim \|f\|_{\Lsolsing} + \| f\|_{\beta,\rKs}\\
 &\lesssim \|f\|_{L^1\cap H^\beta_{\rKs}(\R^d)}  ,
\end{align*}
for some $\beta$ and $\rKs$ such that $\sigma = \beta-\frac{d}{\rKs}$, using the embedding \eqref{eq:GeneralMorrey} in the second inequality above, and \eqref{eq:embed1} in the third. Hence it follows that $\mathcal{N}_{1}(K\ast f) \leq  \|\nabla K_{s} \ast f\|_{L^\infty(\R^d)} \lesssim \|f\|_{L^1\cap H^\beta_{\rKs}(\R^d)}$, i.e. Assumption \eqref{HKsing} is satisfied for $\Hreg=1$, for any $\rKs>d$ and any $\beta\in (\frac{d}{\rKs},1)$ such that $\beta-\frac{d}{\rKs}\in (2-d+s,1)$.
\end{itemize}
\end{itemize} 

Hence, Theorem \ref{th:rate2} and its Corollaries \ref{cor:rateEmpMeas} and \ref{cor:wo-cutoff} are applicable for $s\leq d-2$ and Theorem~\ref{th:rate2sing} and its corollaries are aplicable for $s\in (d-2,d-1)$. In particular,
\begin{itemize}
\item For $s=d-2$ we have the following:
\begin{itemize}
\item Convergence holds for the widest possible range of parameter $\alpha$ by choosing $\rK$ as small as possible. This means here choosing $z=\rK=d^+$.
Under the constraint \eqref{C04}, one can therefore obtain the convergence for any $\alpha< \frac{1}{2(d-1)}$;
\item On the other hand, if one wishes to maximize the rate of convergence, then one must choose $z$ very large and $\rK = +\infty$, then the rate will be in $L^1\cap L^\infty$ norm and is $\varrho= (\frac{1}{2(d+1)})^-$ for $ \alpha = (\frac{1}{2(d+1)})^+$ and  $z$ very large. More precisely, the rate is given by
\begin{equation*}
\varrho = \min \left((1-\frac{d}{z})\alpha, \frac{1}{2} - \alpha d \right) ,
\end{equation*}
which gives $\varrho = \frac{1-d/z}{2 \left(d+1 - \frac{d}{z}\right)}$ for $\alpha = \frac{1}{2 \left(d+1 - \frac{d}{z}\right)}$, whose supremum is $\frac{1}{2(d+1)}$ for $z\to +\infty$.
\end{itemize}
\item For $s \in (d-2,d-1)$  we have the following:
\begin{itemize}
\item Convergence holds for the widest possible range of parameter $\alpha$ by maximizing the constraint $(d+2\beta + 2d(\frac{1}{2}-\frac{1}{\rKs}))^{-1} = (2(d+\sigma))^{-1}$ from the Assumption \eqref{C04sing}. Hence, letting $\sigma = (2-d+s)^+$, one can therefore obtain the convergence for any $\alpha< \frac{1}{2(s+2)}$;
\item On the other hand, if one wishes to maximize the rate of convergence, it is clear that one must choose $\alpha$ such that $\alpha\Hreg = \frac{1}{2}-\alpha d$ (see the definition of $\widetilde{\varrho}$). Since $\Hreg=1$, this gives $\widetilde{\varrho}\leq \frac{1}{2(d+1)}$
 and this maximum is attained for $\alpha = \frac{1}{2(d+1)}$.
\end{itemize}
\end{itemize}

Besides obtaining rates of convergence, Proposition \ref{prop:wpMP} proves the well-posedness of the McKean-Vlasov SDE \eqref{eq:McKeanVlasov} for all Riesz kernels with $s\in (0,d-1)$, which is new for these values, and most notably for the largest values $s\geq d-2$. The trajectorial propagation of chaos (Theorem \ref{th:propagation-of-chaos}) is also new for this whole class of particle systems.

\subsection{Classical kernels}
We present here the following important kernels that enter in the above framework for ${s=d-2}$: Coulomb, Keller-Segel and Biot-Savart kernels. In particular,  for these kernels we get the convergence for $\alpha < \frac{1}{2(d-1)}$ and the best possible rate is $\varrho= (\frac{1}{2(d+1)})^-$ for $ \alpha = (\frac{1}{2(d+1)})^+$. At the end of this section we present a fourth example of an attractive-repulsive kernel.

\paragraph{Coulomb kernel.}  The Coulomb interaction kernel is given by
$$K_{C} := -\nabla V_{d-2}, $$
where $V_{d-2}$ is defined in \eqref{eq:defRiesz}.
It is a generalisation in any dimension of the classical Coulomb force and an example of repulsive kernel, in the sense that
\begin{align}\label{eq:repulsive}
x\cdot K_C(x) \geq 0 , \quad \text{on the domain of definition of }K_C.
\end{align}
It models for instance the interaction between particles with identical electrical charges.

\paragraph{Parabolic-elliptic Keller-Segel models.}\label{subsec:KS}

An important and tricky example covered by this paper is the parabolic-elliptic Keller-Segel PDE, which takes the form \eqref{eq:PDE} with the kernel defined, for some $\chi>0$, by
\begin{align}\label{eq:KS-kernel}
K_{KS}(x) = -\chi \frac{x}{|x|^d}.
\end{align}

The difficulty in this model comes from the fact that the kernel is attractive, in the following sense:
\begin{align}\label{eq:attractive}
x\cdot K(x) < 0 , \quad \text{on the domain of definition of }K.
\end{align}
This leads to important issues that we discuss in more details in \cite{ORT}. Let us just mention as an example that in dimension $2$,  the PDE admits a global solution if and only if $\chi<8\pi$ (see e.g. \citet{Biler} for a recent review). Note that in that case ($d=2$ and $\chi<8\pi$) it is possible to choose a value of the cut-off $A_{T}$ independently of $T$ (see \cite{ORT}).

Theorem \ref{th:rate2}, Corollaries \ref{cor:rateEmpMeas} and \ref{cor:wo-cutoff} give rates of convergence of the particle system to the Keller-Segel PDE, even for solutions that exhibit a blow-up (with the parameters discussed above for $s=d-2$).

As a consequence of Theorem \ref{prop:wpMP}, we also deduce the local-in-time weak well-posedness of the McKean-Vlasov SDE \eqref{eq:McKeanVlasov} for all values of the concentration parameter $\chi$, and global-in-time weak well-posedness for $\chi<8\pi$, which is a novelty (see \cite{ORT} for a thorough discussion and comparison with previous results). We obtain the propagation of chaos result for our particle system to this SDE.

\paragraph{Biot-Savart kernel and the $2d$ Navier-Stokes equation.}

By considering the vorticity field $\xi$ associated to the incompressible two-dimensional Navier-Stokes solution $u$, one gets equation \eqref{eq:PDE} with the Biot-Savart kernel $K_{BS}(x) = \frac{1}{\pi} \frac{x^\perp}{|x|^2}$, where $x^\perp = 
\begin{pmatrix} -x_{2}\\x_{1}\end{pmatrix}$. The original Navier-Stokes solution is then recovered thanks to the formula $u_{t} = K_{BS}\ast \xi_{t}$. 

The Biot-Savart kernel is an example of repulsive kernel, in the sense of \eqref{eq:repulsive}. 
In this case, the Biot-Savart kernel is merely repulsive since $x\cdot K(x) = 0$.

It is well-known that with such kernel, Eq. \eqref{eq:PDE} has a unique global solution, and that $ {\|K\ast \xi_{t}\|_{L^\infty(\R^2)}}$ can be bounded by $C \left(1+ \|\xi_{0}\|_{L^\infty(\R^2)} \right)$ (see \cite{FlandoliOliveraSimon} and references therein). This permits to choose the cut-off value $A_{T}$ from  \eqref{eq:AT'} independently of $T$.

The kernel $K_{BS}$ is covered by our assumption \eqref{HK}, for the same reason as the Coulomb kernel with $d=2$ ($K_{0}(x) = \tfrac{1}{|x|}$  and we can choose $\alpha< \frac{1}{2}$). In that case, we improve within Theorem \ref{th:rate2} and Corollary \ref{cor:weakConv}  the Theorem 1.3 of \citet{FlandoliOliveraSimon}, by allowing $\alpha\in [\frac{1}{4},\frac{1}{2})$ and by providing a rate of convergence.

All the other results of this paper apply. In particular, if the initial condition is regular enough, we obtain a rate $\varrho$ in $L^1\cap L^\infty$ norm (the rate is maximized with $\rK=+\infty$ when $s=d-2$), and this rate is almost $\frac{1}{6}$.

\paragraph{Attractive-repulsive kernels.}

There is at least one other very interesting class of kernels that enters our framework. The attractive-repulsive kernels are attractive in a region of space, i.e. they satisfy \eqref{eq:attractive} on a subdomain $D$ of the domain of definition of $K$, and repulsive (i.e. satisfying \eqref{eq:repulsive}) on the complement of $D$.

The most famous example of such attractive-repulsive kernels might be the Lennard-Jones potential in molecular dynamics: this isotropic potential reads
\begin{align*}
V(x) = V_{0} \left(|x|^{-12} - |x|^{-6}\right) ,
\end{align*}
for some $V_{0}>0$. Then $K(x) = \nabla V(x)$ satisfies the first condition of \eqref{HK} (local integrability) only if the dimension is greater or equal to $14$, which may not be of the greatest physical relevance.

A similar, but less singular potential is proposed by \citet{FlandoliLeocataRicci} to model the adhesion of cells in biology. It can be expressed in general as
$$V(x) = V_{a}\, |x|^{-a} - V_{b}\, |x|^{-b},$$
 with $a,b>0$ and $V_{a}, V_{b}>0$. One can now refer to the discussion on Riesz kernels in Section \ref{subsec:RieszKernel} to determine the values of $a$ and $b$ that ensure the applicability of our results.

\vspace{1cm}

\appendix

\section*{Appendix}
\setcounter{section}{1}
\setcounter{equation}{0}
\label{sec:appendix}
\addcontentsline{toc}{section}{Appendix}

\subsection{Properties of the PDE and of the PDE with cut-off}\label{sec:PDE}

We start this section with some classical embeddings that will be used throughout the article. Then in Subsection \ref{subsec:PDE}, we derive some general inequalities and prove Proposition \ref{prop:local_existence}. In Subsection \ref{subsec:PDE_cutoff}, we prove that the PDE with cut-off \eqref{eq:cutoffPDE} can have at most one mild solution.

\subsubsection{Properties of mild solutions of the PDE}\label{subsec:PDE}

We recall that whenever  \eqref{HK1} and \eqref{HK2} are assumed to hold, we suppose $\rK \geq \max(\pK',\qK')$. For each $T>0$, let us consider the space 
\begin{align*}
\mathcal{X} := \mathcal{C}\left([0,T]; ~\Lsol\right),
\end{align*}
with the associated norm $ \|\cdot \|_{T,\Lsol}$, hereafter simply denoted by $\, \|\cdot\|_{\mathcal{X}}$.
The proof of existence of local solutions relies on the continuity of the following bilinear mapping, defined on $\mathcal{X}\times \mathcal{X}$ as
\begin{align*}
B: (u,v) \mapsto \left(\int_{0}^t \nabla \cdot e^{(t-s)\Delta} \left(u_{s} ~K\ast v_{s}\right)~ds \right)_{t\in[0,T]} .
\end{align*}

\begin{lemma}\label{lem:continuityB}
The bilinear mapping $B$ is continuous from $\mathcal{X}\times \mathcal{X}$ to $\mathcal{X}$.
\end{lemma}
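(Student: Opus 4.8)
The plan is to estimate $\|B(u,v)_t\|_{\Lsol}$ by splitting the bound into the $L^1$ part and the $L^{\rK}$ part, and in each case use the smoothing estimate \eqref{eq:heat-op-norm} for $\nabla e^{(t-s)\Delta}$ together with Lemma \ref{lem:unifbound}. More precisely, for $p\in\{1,\rK\}$ I would write
\begin{equation*}
\|B(u,v)_t\|_{L^p(\R^d)} \leq \int_0^t \big\|\nabla e^{(t-s)\Delta}(u_s\, K\ast v_s)\big\|_{L^p(\R^d)}\, ds \leq C\int_0^t \frac{1}{\sqrt{t-s}} \|u_s\, K\ast v_s\|_{L^p(\R^d)}\, ds,
\end{equation*}
and then bound $\|u_s\, K\ast v_s\|_{L^p(\R^d)} \leq \|u_s\|_{L^p(\R^d)} \|K\ast v_s\|_{L^\infty(\R^d)} \leq C_{K,d}\, \|u_s\|_{L^p(\R^d)}\, \|v_s\|_{\Lsol}$ by Lemma \ref{lem:unifbound}. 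Summing the $p=1$ and $p=\rK$ estimates and taking the supremum over $t\in[0,T]$, the time integral $\int_0^t (t-s)^{-1/2}\,ds = 2\sqrt t \le 2\sqrt T$ is finite, so one gets $\|B(u,v)\|_{\mathcal{X}} \leq C\sqrt{T}\, \|u\|_{\mathcal{X}}\, \|v\|_{\mathcal{X}}$, which gives the bilinear continuity bound.

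The one point requiring a little care beyond these norm estimates is showing that $B(u,v)$ is genuinely in $\mathcal{X}$, i.e. that $t\mapsto B(u,v)_t$ is continuous from $[0,T]$ into $\Lsol$. For this I would argue in the standard way: the map $s\mapsto u_s\, K\ast v_s$ is in $L^\infty([0,T];\Lsol)$ by the bound above (using $u,v\in\mathcal{X}$ and Lemma \ref{lem:unifbound}), and continuity of the stochastic-convolution-free integral $t\mapsto \int_0^t \nabla\cdot e^{(t-s)\Delta} w_s\, ds$ in $L^p$ for $w\in L^\infty([0,T];L^p)$ follows from the strong continuity of the heat semigroup on $L^p$ and the integrable singularity $(t-s)^{-1/2}$ of the kernel; one splits $\int_0^{t'} - \int_0^{t}$ into a piece near the endpoint (small by the integrable singularity) and a piece where one uses $\|(e^{(t'-s)\Delta} - e^{(t-s)\Delta})\nabla\cdot w_s\|_{L^p}\to 0$ dominated by $C(t-s)^{-1/2}\|w_s\|_{L^p}$. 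The same reasoning applies with $\nabla\cdot e^{(t-s)\Delta}$ replaced by $e^{(t-s)\Delta}\nabla\cdot$, which is the form actually needed here.

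I do not expect any real obstacle: the main work is just the two-line Hölder-plus-heat-kernel estimate, and the continuity in time is routine. The only thing to keep track of is that the constant $C$ depends on $K$ and $d$ (through $C_{K,d}$ of Lemma \ref{lem:unifbound}) and on $T$, but not on $u$ or $v$, and that it is the factor $\sqrt{T}$ which will later allow a fixed-point/contraction argument on a small enough time interval in the proof of Proposition \ref{prop:local_existence}.
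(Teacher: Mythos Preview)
Your proposal is correct and follows essentially the same approach as the paper: estimate $\|B(u,v)_t\|_{L^p}$ for $p\in\{1,\rK\}$ via \eqref{eq:heat-op-norm} and Lemma~\ref{lem:unifbound}, obtaining $\|B(u,v)_t\|_{\Lsol}\leq C\sqrt{t}\,\|u\|_{\mathcal{X}}\|v\|_{\mathcal{X}}$. You even go slightly further than the paper by explicitly addressing the time-continuity of $t\mapsto B(u,v)_t$, which the paper leaves implicit after stating that the norm bound ``suffices to prove the continuity of $B$''.
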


\begin{proof}
We shall prove that there exists $C>0$ such that for any $u,v\in \mathcal{X}$ and any $t\in [0,T]$, 
\begin{align}\label{eq:continuityB}
\|B(u,v)(t)\|_{\Lsol} \leq C \sqrt{t}~\|u\|_{\mathcal{X}} \|v\|_{\mathcal{X}} ,
\end{align}
which suffices to prove the continuity of $B$. 

First, using the property \eqref{eq:heat-op-norm} of the Gaussian kernel with $p=\rK$, observe that for any $t\in [0,T]$,
\begin{align*}
\|B(u,v)(t)\|_{L^\rK(\R^d)} \leq \int_{0}^t \frac{C}{\sqrt{t-s}} \|u_{s} ~K\ast v_{s}\|_{L^\rK(\R^d)} ~ds, 
\end{align*}
and since $v_{s} \in \Lsol$, Lemma \ref{lem:unifbound} yields
\begin{align}\label{eq:BuvL1_2}
\|B(u,v)(t)\|_{L^\rK(\R^d)} &\leq \int_{0}^t \frac{C}{\sqrt{t-s}} \|K\ast v_{s}\|_{L^\infty(\R^d)}~ \|u_{s} \|_{L^\rK(\R^d)} ~ds \nonumber\\
&\leq C \sqrt{t} ~ \|u\|_{\mathcal{X}} \|v\|_{\mathcal{X}}.
\end{align}

On other hand we have, using similarly the property \eqref{eq:heat-op-norm} of the Gaussian kernel with $p=1$ and Lemma \ref{lem:unifbound}, that 
\begin{align}\label{eq:BuvLd}
\|B(u,v)(t)\|_{L^1(\R^d)} &\leq  \int_{0}^t \frac{C}{\sqrt{t-s}} \|u_{s} ~K\ast v_{s}\|_{L^1(\R^d)} ~ds \nonumber\\
&\leq \int_{0}^t \frac{C}{\sqrt{t-s}} \|u_{s}\|_{L^1(\R^d)} \|K\ast v_{s}\|_{L^{\infty}(\R^d)} ~ds
\nonumber\\&\leq  C \sqrt{t} ~ \|u\|_{\mathcal{X}} \|v\|_{\mathcal{X}}.
\end{align}

Therefore, combining Equations \eqref{eq:BuvL1_2}, \eqref{eq:BuvLd} one obtains \eqref{eq:continuityB}.
\end{proof}

The previous property of continuity of $B$ now provides the existence of a local mild solution by a classical argument.

\begin{proof}[Proof of Proposition \ref{prop:local_existence}]
For the existence part, note that for any $T>0$, $\mathcal{X}$ is a Banach space and that our aim is to find $T>0$ and $u\in \mathcal{X}$ such that
\begin{align*}
u_{t} = e^{t\Delta} u_{0} - B(u,u)(t),\quad \forall t\in[0,T].
\end{align*}
In view of Lemma \ref{lem:continuityB}, such a local mild solution is obtained by a standard contraction argument (Banach fixed-point Theorem).

~

We will prove the uniqueness in the (slightly more complicated) case of the cut-off PDE \eqref{eq:cutoffPDE}, for any value of the cut-off $A$ (see Proposition \ref{prop:cutPDE_unique0}). Admitting this result for now, let us observe that it implies uniqueness for the PDE without cut-off. Indeed, if $u^{1}$ and $u^{2}$ are mild solutions to \eqref{eq:PDE} on some interval $[0,T]$, then Lemma \ref{lem:unifbound} implies that for $i=1,2$, 
\begin{align*}
\|K\ast u^{i}\|_{T,L^\infty(\R^d)} &\leq C_{K,d} \|u^{i}\|_{T,\Lsol} <\infty.
\end{align*}
Thus $u^{1}$ and $u^{2}$ are also mild solutions to the cut-off PDE with $A$ larger than the maximum between $C_{K,d} \|u^{1}\|_{T,\Lsol}$ and $C_{K,d} \|u^{2}\|_{T,\Lsol}$. Hence the uniqueness result for the PDE with cut-off implies that $u^{1}$ and $u^{2}$ coincide.
\end{proof}

\begin{corollary}\label{cor:boundIC}
Assume that \eqref{HK1} and \eqref{HK2} hold. Let $C$ be the constant that appears in \eqref{eq:continuityB}. Then for $T>0$ such that 
\begin{align}\label{eq:condT0}
4 C \sqrt{T} ~\|u_0\|_{\Lsol}<1, 
\end{align}
one can define a local mild solution $u$ to \eqref{eq:PDE} up to time $T$ and 
\begin{align}\label{eq:boundu}
\|u\|_{T, \Lsol}\leq \frac{1-\sqrt{1-4 C\sqrt{T}~ \|u_0\|_{\Lsol}}}{2 C\sqrt{T} }.
\end{align}
For instance, assuming that $\|u_0\|_{\Lsol}\neq 0$ and choosing $T$ such that $4 C \sqrt{T}~ \|u_0\|_{\Lsol}=\frac{1}{2}$, one has 
\begin{equation*}
\|u\|_{T, \Lsol}< 4 \|u_0\|_{\Lsol}.
\end{equation*}
\end{corollary}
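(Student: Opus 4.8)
The plan is to revisit the fixed-point argument behind Proposition~\ref{prop:local_existence} and make the size of the solution explicit by keeping track of the radius of the ball on which the contraction takes place. For $T>0$ introduce the Picard map
\[
\Phi(u)_{t} := e^{t\Delta} u_{0} - B(u,u)(t), \qquad t\in[0,T],
\]
on the Banach space $\mathcal{X} = \Ccal([0,T];\Lsol)$. Since $e^{t\Delta}$ acts by convolution with a probability density, it is a contraction both on $L^{1}(\R^{d})$ and on $L^{\rK}(\R^{d})$, so $\|e^{\cdot\,\Delta}u_{0}\|_{\mathcal{X}} \leq \|u_{0}\|_{\Lsol}$; combining this with the bilinear estimate \eqref{eq:continuityB} of Lemma~\ref{lem:continuityB} gives, for all $u,v\in\mathcal{X}$,
\[
\|\Phi(u)\|_{\mathcal{X}} \leq \|u_{0}\|_{\Lsol} + C\sqrt{T}\,\|u\|_{\mathcal{X}}^{2}, \qquad
\|\Phi(u)-\Phi(v)\|_{\mathcal{X}} \leq C\sqrt{T}\,\big(\|u\|_{\mathcal{X}}+\|v\|_{\mathcal{X}}\big)\,\|u-v\|_{\mathcal{X}},
\]
the second inequality following from the bilinearity identity $\Phi(u)-\Phi(v) = B(v-u,v)+B(u,v-u)$.

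Next I would identify the ball on which $\Phi$ is a self-map. For $R>0$, the closed ball $\overline{B}_{\mathcal{X}}(0,R)$ is stable under $\Phi$ as soon as $\|u_{0}\|_{\Lsol} + C\sqrt{T}R^{2}\leq R$, i.e. as soon as $R$ lies between the two roots
\[
R_{\pm} = \frac{1\pm\sqrt{\,1-4C\sqrt{T}\,\|u_{0}\|_{\Lsol}\,}}{2C\sqrt{T}}
\]
of the quadratic $C\sqrt{T}R^{2}-R+\|u_{0}\|_{\Lsol}$; these roots are real precisely because $4C\sqrt{T}\|u_{0}\|_{\Lsol}\leq 1$, which is ensured by \eqref{eq:condT0}. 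I would then take $R=R_{-}$. On $\overline{B}_{\mathcal{X}}(0,R_{-})$ the second estimate above gives a Lipschitz constant bounded by $2C\sqrt{T}R_{-}$, and because the inequality in \eqref{eq:condT0} is \emph{strict} one has $\sqrt{1-4C\sqrt{T}\|u_{0}\|_{\Lsol}}>0$, whence $R_{-}<\tfrac{1}{2C\sqrt{T}}$ and thus $2C\sqrt{T}R_{-}<1$; so $\Phi$ is a strict contraction there.

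By the Banach fixed-point theorem, $\Phi$ has a unique fixed point $u$ in $\overline{B}_{\mathcal{X}}(0,R_{-})$, and this fixed point is a mild solution of \eqref{eq:PDE} on $[0,T]$; by the uniqueness part of Proposition~\ref{prop:local_existence} it coincides with the mild solution constructed there, and by construction $\|u\|_{T,\Lsol}=\|u\|_{\mathcal{X}}\leq R_{-}$, which is exactly \eqref{eq:boundu}. Finally, if $\|u_{0}\|_{\Lsol}\neq 0$ and $T$ is chosen so that $4C\sqrt{T}\|u_{0}\|_{\Lsol}=\tfrac12$, then $\tfrac{1}{2C\sqrt{T}}=4\|u_{0}\|_{\Lsol}$, and the strict bound $R_{-}<\tfrac{1}{2C\sqrt{T}}$ yields $\|u\|_{T,\Lsol}<4\|u_{0}\|_{\Lsol}$. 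The argument is essentially bookkeeping; the only point needing care is to check that the self-map radius $R_{-}$ lies below the contraction threshold $\tfrac{1}{2C\sqrt{T}}$ — which is exactly where the strictness in \eqref{eq:condT0} is used — and to invoke uniqueness in order to match the fixed point found here with the solution of Proposition~\ref{prop:local_existence}.
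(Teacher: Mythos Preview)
Your proof is correct. The paper takes a closely related but slightly different route: instead of re-running the fixed-point argument on the ball of radius $R_{-}$, it starts from the solution $u$ furnished by Proposition~\ref{prop:local_existence}, derives the a~priori quadratic inequality
\[
\|u\|_{t,\Lsol} \leq \|u_{0}\|_{\Lsol} + C\sqrt{t}\,\|u\|_{t,\Lsol}^{2},
\]
and then invokes a standard continuity-and-continuation lemma (Lemma~2.3 in \cite{Nagai11}) to conclude \eqref{eq:boundu}. Your approach has the merit of delivering existence on the full interval $[0,T]$ and the bound \eqref{eq:boundu} in a single self-contained step, whereas the paper's formulation leaves the passage from a short local-existence time to the prescribed $T$ hidden inside the cited lemma. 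Both arguments rest on the same bilinear estimate \eqref{eq:continuityB} and the same quadratic algebra; they are two standard ways of extracting the same conclusion.
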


\begin{proof}
We rely on the bound \eqref{eq:continuityB}, in order to get that for $t\leq T$,
$$\|u\|_{t, \Lsol} \leq \|u_0\|_{\Lsol}+ C \sqrt{t}~ \|u\|^2_{t, \Lsol}.$$
Then by a standard argument (see e.g. Lemma 2.3 in \cite{Nagai11}), choosing $T>0$ which satisfies \eqref{eq:condT0} ensures that \eqref{eq:boundu} holds true.
\end{proof}

\begin{remark}
\label{remark:propertiesEDP}
Here we recall that if $u$ is a local mild solution on $[0,T]$ to \eqref{eq:PDE}, then $u$ preserves mass and sign: if $u_{0}\geq 0$, then $u_{t}\geq 0$ for any $t\in [0,T]$ and $\int_{\R^d}u_t(x) \ dx=\int_{\R^d} u_0(x) \ dx.$ 
The sign preservation is obtained by an argument similar to \cite[Proposition 2.7]{Nagai11} (although using \eqref{HK3} instead of the Poisson kernel). Then, the mass preservation is obtained by a standard integration-by-parts argument (see \cite[Proposition 2.4]{Nagai11} for a similar result in a different functional framework).
\end{remark}

\subsubsection{Properties of mild solutions of the PDE with cut-off}\label{subsec:PDE_cutoff}

We aim to prove the convergence of the mollified empirical measure to the following PDE with cut-off:
\begin{equation}\label{eq:cutoffPDE}
\begin{cases}
&\partial_t \widetilde{u}(t,x) = \Delta\widetilde{u}(t,x) -  \nabla \cdot \left(\widetilde{u}(t,x) F_{A} ( K \ast \widetilde{u}(t,x))\right),\quad t>0,~x\in\R^d\\
&\widetilde{u}(0,x) = u_0(x). 
\end{cases}
\end{equation}
Note that if $F_{A}$ is replaced by the identity function, one recovers \eqref{eq:PDE}. 
\begin{remark}\label{rk:mildsol}
Similarly to Definition \ref{def:defMild}, a mild solution to  \eqref{eq:cutoffPDE} satisfies Definition \ref{def:defMild}$(i)$ 
and solves 
\begin{equation}
\label{eq:mildKS-cutoff}
u_{t} =  e^{t\Delta} u_0 -  \int_0^t \nabla \cdot e^{(t-s)\Delta }  (u_{s} F_{A}(K \ast u_{s}))\ ds, \quad 0 \leq t \leq T.
\end{equation}
\end{remark}

In this section, we consider the cut-off PDE \eqref{eq:cutoffPDE} and its mild solution defined above. Here, $F_{A}$ is given in \eqref{eq:defF0}, but we denote it simply by $F$ for the sake of readability.

Note that due to the boundedness of the reaction term in \eqref{eq:cutoffPDE}, any mild solution will always be global. This global solution will be rigorously obtained as the limit of the particle system \eqref{eq:IPS}.

\begin{proposition}\label{prop:cutPDE_unique0}
Assume that $K$ satisfies Assumptions \eqref{HK1} and \eqref{HK2} and recall that $\rK\geq \max(\pK',\qK')$. Let $u_0\in \Lsol$. Then for any $A>0$ and $F$ defined in \eqref{eq:defF0}, there is at most one mild solution 
to the cut-off PDE \eqref{eq:cutoffPDE}. 
\end{proposition}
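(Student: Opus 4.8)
The plan is to run the classical uniqueness argument for mild solutions: subtract the two integral equations, split the nonlinearity into a term linear in the difference plus a term controlled by the Lipschitz continuity of $F=F_{A}$, and close a (weakly singular) Gr\"onwall estimate in the norm $\|\cdot\|_{\Lsol}$. So suppose $u^{1}$ and $u^{2}$ are two mild solutions to \eqref{eq:cutoffPDE} on $[0,T]$ with the same initial datum $u_{0}$, both in $\Ccal([0,T];\Lsol)$. Subtracting their mild formulations \eqref{eq:mildKS-cutoff} gives
\begin{equation*}
u^{1}_{t} - u^{2}_{t} = - \int_{0}^{t} \nabla \cdot e^{(t-s)\Delta}\Big( u^{1}_{s} F(K\ast u^{1}_{s}) - u^{2}_{s} F(K\ast u^{2}_{s}) \Big)\, ds ,
\end{equation*}
and the natural decomposition of the integrand is
\begin{equation*}
u^{1}_{s} F(K\ast u^{1}_{s}) - u^{2}_{s} F(K\ast u^{2}_{s}) = (u^{1}_{s}-u^{2}_{s})\, F(K\ast u^{1}_{s}) + u^{2}_{s}\,\big( F(K\ast u^{1}_{s}) - F(K\ast u^{2}_{s})\big).
\end{equation*}

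Next I would estimate $\|u^{1}_{t}-u^{2}_{t}\|_{L^{p}(\R^d)}$ for $p\in\{1,\rK\}$ separately. Using the smoothing bound \eqref{eq:heat-op-norm} for $\nabla e^{(t-s)\Delta}$ acting on $L^{p}$, the boundedness of $F$ applied to the first term, the Lipschitz property of $F$ together with H\"older's inequality on the second term (namely $\|u^{2}_{s}(F(K\ast u^{1}_{s})-F(K\ast u^{2}_{s}))\|_{L^{p}} \leq \|u^{2}_{s}\|_{L^{p}}\,\|F\|_{\text{Lip}}\,\|K\ast(u^{1}_{s}-u^{2}_{s})\|_{L^{\infty}}$), and finally Lemma \ref{lem:unifbound} to bound $\|K\ast(u^{1}_{s}-u^{2}_{s})\|_{L^{\infty}} \leq C_{K,d}\,\|u^{1}_{s}-u^{2}_{s}\|_{\Lsol}$, one arrives, after adding the cases $p=1$ and $p=\rK$, at
\begin{equation*}
\|u^{1}_{t}-u^{2}_{t}\|_{\Lsol} \leq C \int_{0}^{t} \frac{1}{\sqrt{t-s}}\, \|u^{1}_{s}-u^{2}_{s}\|_{\Lsol}\, ds, \qquad 0\le t\le T,
\end{equation*}
where $C$ depends only on $A$, $\|F\|_{L^\infty(\R^d)}$, $\|F\|_{\text{Lip}}$, $C_{K,d}$ and $\max_{i=1,2}\|u^{i}\|_{T,\Lsol}$, all finite by Definition \ref{def:defMild}.

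Finally I would conclude via a Gr\"onwall lemma adapted to the weakly singular kernel $(t-s)^{-1/2}$: since this kernel is integrable, iterating the above inequality once yields a bounded kernel, and a standard Gr\"onwall argument then forces $\|u^{1}_{t}-u^{2}_{t}\|_{\Lsol}=0$ for all $t\in[0,T]$, i.e. $u^{1}\equiv u^{2}$. There is no genuine obstacle in this argument; the only points that need a little care are (i) checking that each piece of the split integrand really lies in $L^{1}\cap L^{\rK}$, which relies on Lemma \ref{lem:unifbound} guaranteeing that $K\ast v$ is bounded for $v\in\Lsol$, so that $F(K\ast u^{i})$ is a bounded multiplier; and (ii) invoking the generalized Gr\"onwall inequality for the singular kernel rather than the classical one.
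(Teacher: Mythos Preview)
Your proposal is correct and follows essentially the same route as the paper: subtract the mild equations, split the nonlinearity exactly as you do, estimate in $L^{1}$ and $L^{\rK}$ using \eqref{eq:heat-op-norm}, the boundedness and Lipschitz property of $F$, and Lemma~\ref{lem:unifbound}. The only cosmetic difference is in closing the argument: the paper pulls the supremum $\|u^{1}-u^{2}\|_{t,\Lsol}$ out of the integral to get a bound of the form $C\sqrt{t}\,(1+\|u^{2}\|_{t,\Lsol})\,\|u^{1}-u^{2}\|_{t,\Lsol}$, deduces uniqueness for small $t$, and then restarts; you instead keep the convolution form and invoke a singular Gr\"onwall lemma. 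Both are standard and equivalent here.
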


\begin{proof}
Assume there are two mild solutions $u^1$ and $u^2$ to \eqref{eq:cutoffPDE}. 
Then,
\begin{align*}
u^1_t - u^2_t &= -\int_0^t \nabla \cdot e^{(t-s)\Delta }  \left\{ u^1_s F( K\ast u^1_s) - u^2_s F(K\ast u^2_s ) \right\}~ds \\
&= -\int_0^t  \nabla \cdot e^{(t-s)\Delta }  \left\{ (u^1_s-u^2_s)F( K\ast u^1_s ) 
+u^2_s (F(K\ast u^1_s) - F( K\ast u^2_s))\right\}~ds .
\end{align*}
Hence there exists $C>0$ (that depends on $A$) such that 
\begin{align}\label{eq:deltarho}
\|u^1_t - u^2_t\|_{L^1(\R^d)} + \|u^1_t - u^2_t\|_{L^{\rK}(\R^d)} &\leq 
C \int_0^t  \frac{1}{\sqrt{t-s}} \left( \|u^1_s-u^2_s\|_{L^1(\R^d)}+ \|u^2_s~ K\ast(u^1_s-u^2_s)\|_{L^1(\R^d)} \right) ~ds \nonumber\\
& +C \int_0^t  \frac{1}{\sqrt{t-s}} \left( \|u^1_s-u^2_s\|_{L^{\rK}(\R^d)}+ \|u^2_s~ K\ast(u^1_s-u^2_s)\|_{L^{\rK}(\R^d)} \right) ~ds \nonumber\\
&\leq C \sqrt{t}~ \|u^1 - u^2\|_{t, \Lsol} \nonumber\\
&+ C \int_0^t \frac{\|u^2_s\|_{L^\rK(\R^d)} + \|u^2_s\|_{L^1(\R^d)}}{\sqrt{t-s}} ~\|K\ast \left(u^1_s - u^2_s\right)\|_{L^\infty(\R^d)} ~ds .
\end{align}
In view of Lemma \ref{lem:unifbound}, one has $\|K\ast (u^1_s - u^2_s)\|_{L^\infty(\R^d)} \leq C \|u^1_s - u^2_s\|_{\Lsol}$, thus 
plugging this upper bound in \eqref{eq:deltarho} gives
\begin{align*}
\|u^1_t - u^2_t\|_{L^1(\R^d)} +\|u^1_t - u^2_t\|_{L^\rK(\R^d)} \leq  \|u^1 - u^2\|_{t, \Lsol}C\sqrt{t} \left(1+   \|u^2\|_{t, \Lsol} \right).
\end{align*}

Hence for $t$ small enough, we deduce that $\|u^1 - u^2\|_{t,\Lsol}=0$. Therefore the uniqueness holds for mild solutions on $[0,t]$. Then by restarting the equation and using the same arguments as above, one gets uniqueness. 
\end{proof}

Finally, we get the following stability estimate in $H^\beta_{\rK}$ for the solution of the PDE.
\begin{proposition}\label{regularity}
Assume that $K$ satisfies Assumptions \eqref{HK1} and \eqref{HK2}. Let $u_0\in H_{\rK}^{\beta}(\R^{d})$ and $0\leq \beta<1$. Then for any $A>0$ and $F$ defined in \eqref{eq:defF0}, 
the unique solution  of the cut-off PDE \eqref{eq:cutoffPDE} verifies
\begin{equation*}
\sup_{t\in[0,T]} \|  u_{t}\|_{\beta, \rK}<\infty.
\end{equation*}
\end{proposition}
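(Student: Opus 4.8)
The plan is to run the same contraction/Grönwall scheme as in Proposition \ref{prop:local_existence} and Proposition \ref{prop:cutPDE_unique0}, but now measuring everything in the Bessel norm $\|\cdot\|_{\beta,\rK}$ instead of $\|\cdot\|_{\Lsol}$. Starting from the mild formulation \eqref{eq:mildKS-cutoff} for the unique solution $u$, I would write
\begin{align*}
\|u_{t}\|_{\beta,\rK} \leq \|e^{t\Delta}u_{0}\|_{\beta,\rK} + \int_{0}^{t} \big\| (I-\Delta)^{\frac{\beta}{2}}\nabla\cdot e^{(t-s)\Delta}\big(u_{s}\,F(K\ast u_{s})\big)\big\|_{L^{\rK}(\R^d)}\, ds .
\end{align*}
For the first term I use that the heat semigroup is a contraction on $H^{\beta}_{\rK}$, so $\|e^{t\Delta}u_{0}\|_{\beta,\rK} \leq \|u_{0}\|_{\beta,\rK}$. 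For the integrand I split $(I-\Delta)^{\frac{\beta}{2}}\nabla\cdot e^{(t-s)\Delta}$ as a product of operators and use the estimate \eqref{eq:heat-op-norm-frac}, together with the bound $\|\nabla e^{t\Delta}\|_{L^{\rK}\to L^{\rK}}\leq C t^{-1/2}$ from \eqref{eq:heat-op-norm}: distributing $\tfrac{\beta}{2}+\tfrac12$ derivatives onto $e^{(t-s)\Delta}$ costs a factor $C(t-s)^{-\frac{1+\beta}{2}}$ acting on $L^{\rK}$, so
\begin{align*}
\big\| (I-\Delta)^{\frac{\beta}{2}}\nabla\cdot e^{(t-s)\Delta}\big(u_{s}\,F(K\ast u_{s})\big)\big\|_{L^{\rK}} \leq \frac{C}{(t-s)^{\frac{1+\beta}{2}}}\, \|u_{s}\,F(K\ast u_{s})\|_{L^{\rK}(\R^d)} .
\end{align*}
Since $\beta<1$, the exponent $\frac{1+\beta}{2}<1$ is integrable in time near $s=t$, which is exactly why the hypothesis $0\leq\beta<1$ is needed.

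Next I would bound $\|u_{s}\,F(K\ast u_{s})\|_{L^{\rK}}$ by $\|F\|_{L^{\infty}}\,\|u_{s}\|_{L^{\rK}(\R^d)}\leq \|F\|_{L^{\infty}}\,\|u_{s}\|_{\beta,\rK}$ (using $\|u_{s}\|_{0,\rK}\leq\|u_{s}\|_{\beta,\rK}$ and the boundedness of $F$ from its definition \eqref{eq:defF0}). This gives
\begin{align*}
\|u_{t}\|_{\beta,\rK} \leq \|u_{0}\|_{\beta,\rK} + C\int_{0}^{t} \frac{\|u_{s}\|_{\beta,\rK}}{(t-s)^{\frac{1+\beta}{2}}}\, ds ,
\end{align*}
where $C$ depends on $A$, $K$, $d$. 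Here I also implicitly use Proposition \ref{prop:cutPDE_unique0} (the solution is unique and global) and the local well-posedness machinery to know $u$ exists; to make the argument rigorous one should first run a fixed-point argument in $\Ccal([0,T];H^{\beta}_{\rK})$ on a short interval to get an a priori $H^{\beta}_{\rK}$-valued solution, which then coincides with the known mild solution by uniqueness in the larger space $\Ccal([0,T];\Lsol)$, or alternatively regularise and pass to the limit. A singular Grönwall lemma (of Gronwall–Henry type, applicable since the kernel $(t-s)^{-\frac{1+\beta}{2}}$ is locally integrable) then yields $\sup_{t\in[0,T]}\|u_{t}\|_{\beta,\rK}\leq C_{T}\,\|u_{0}\|_{\beta,\rK}<\infty$.

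The main obstacle is not any single estimate but rather the bookkeeping of which function space the solution is known to live in: one needs to be careful that the Grönwall argument is applied to a quantity already known to be finite and continuous in time (otherwise the inequality is vacuous). The cleanest route is to first prove, by the same contraction argument as in Proposition \ref{prop:local_existence} but in $\Ccal([0,T_{0}];H^{\beta}_{\rK})$ for $T_{0}$ small — the bilinear map $B$ is bounded there with a factor $C T_{0}^{\frac{1-\beta}{2}}$ by the estimate above — that there is a local $H^{\beta}_{\rK}$-valued mild solution; by uniqueness in $\Ccal([0,T];\Lsol)$ (Proposition \ref{prop:cutPDE_unique0}) it agrees with $u$ on $[0,T_{0}]$; and then the a priori bound just derived, being independent of the length of the interval up to the $\|u_{0}\|_{\beta,\rK}$ factor and the Grönwall constant, lets one iterate to cover all of $[0,T]$. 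Everything else is a routine application of \eqref{eq:heat-op-norm}, \eqref{eq:heat-op-norm-frac}, the boundedness of $F$, and Lemma \ref{lem:unifbound}.
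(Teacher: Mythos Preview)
Your proposal is correct and follows essentially the same approach as the paper: mild formulation, the operator bound $\|(I-\Delta)^{\beta/2}\nabla\cdot e^{(t-s)\Delta}\|_{L^{\rK}\to L^{\rK}}\leq C(t-s)^{-(1+\beta)/2}$ from \eqref{eq:heat-op-norm-frac}, boundedness of $F$, the inequality $\|u_s\|_{L^{\rK}}\leq\|u_s\|_{\beta,\rK}$, and a singular Gr\"onwall lemma. Your additional discussion of the bookkeeping issue (ensuring $\|u_t\|_{\beta,\rK}$ is a priori finite via a local fixed point in $\Ccal([0,T_0];H^\beta_{\rK})$ before running Gr\"onwall) is in fact more careful than the paper, which applies Gr\"onwall directly without addressing this point.
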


\begin{proof} 
Let $u$ be the unique solution of the cut-off PDE \eqref{eq:cutoffPDE}.  
Then using \eqref{eq:defFracLaplacian} and the fractional estimate \eqref{eq:heat-op-norm-frac}, we have 
\begin{align*}
\| u_t  \|_{\beta, \rK}  &\leq  \| e^{t\Delta} u_{0}\|_{\beta, \rK}  +
\int_0^t  \| \nabla \cdot e^{(t-s)\Delta }  \left\{ u_s \, F( K\ast u_s) \right\}\|_{\beta,\rK}
\, ds \\
&=  \| e^{t\Delta} u_{0}\|_{\beta, \rK}  +
\int_0^t  \| \left(I-\Delta\right)^{\frac{\beta}{2}} \nabla \cdot e^{(t-s)\Delta }  \left\{ u_s \, F( K\ast u_s) \right\}\|_{L^\rK(\R^d)}
\, ds \\
&\leq
\| u_{0}\|_{\beta, \rK}  +
  C \int_0^t  \frac{1}{(t-s)^{\frac{1+\beta}{2} }}  \| u_s\, F( K\ast u_s)\|_{L^\rK(\R^d)}
\, ds \\
&\leq
\| u_{0}\|_{\beta, \rK}  +
  C \int_0^t  \frac{1}{(t-s)^{\frac{1+\beta}{2} }}  \| u_s\|_{L^\rK(\R^d)}  \, ds\\
&\leq
\| u_{0}\|_{\beta, \rK}  +
  C  \int_0^t  \frac{1}{(t-s)^{\frac{1+\beta}{2} }}  \|  u_s\|_{\beta, \rK}  \, ds .
\end{align*}
 Since $\beta<1$, we deduce the desired estimate from Gr\"onwall's lemma. 
\end{proof}

\subsection{Time and space estimates of the stochastic convolution integrals}
\label{app:martingale}

In this section we study the moments of the supremum in time of $ \|M^N_{t}\|_{L^p(\R^d)}$, where the stochastic convolution integral $M^N_{t}$ was defined in \eqref{eq:defM}. Such estimates will be used in the proof of Theorem~\ref{th:rate2} for $p=1$ and $p=\infty$ only, but the result is established for any $p$ without any additional effort.

To achieve this, we use a generalization of the Burkholder-Davis-Gundy (BDG) inequality in UMD Banach spaces (see \citet{vNeervenEtAl}). There is a classical trick to apply  BDG-type inequalities to stochastic convolution integrals, however it only leads to a bound on  $\|  \|M^N_{t}\|_{L^{1}(\R^{d})} \|_{L^m(\Omega)} $ for a fixed $t>0$, instead of a bound on $\| \sup_{s\in [0,t] } \|M^N_{s}\|_{L^{1}(\R^{d})} \|_{L^m(\Omega)} $. In order to keep the supremum in time inside the expectation, we will also use the lemma of Garsia, Rodemich and Rumsey \cite{GRR}. Besides, there is an additional difficulty here which is that $L^1$ is not a UMD Banach space, hence the infinite-dimensional version of the BDG inequality cannot be applied directly.

In this section, the kernel $K$ is only assumed to be locally integrable. We keep this very general assumption (which is in particular implied by \eqref{HK1}-\eqref{HK2} or \eqref{HK1sing}-\eqref{HK2sing}), since we believe it might be useful in other contexts.

\begin{proposition}
\label{prop:martingale-bound}
Assume that the initial conditions satisfy \eqref{H} or \eqref{Hsing} and that $K\in L^1_{loc}(\R^d)$. 
Let $m\geq 1$, $p\in[1,+\infty]$ and $\varepsilon >0$. Then there exists $C>0$ such that for any $t\in [0,T]$ and $N\in\N^*$,
\begin{equation*}
\left\| \sup_{s\in [0,t] } \|M^N_{s}\|_{L^{p}(\R^{d})} \right\|_{L^{m}(\Omega)}  \leq C\,  N^{- \frac{1}{2}\left(1-\alpha (d+ \varkappa_{p}) \right) + \varepsilon} ,
\end{equation*}
where $\varkappa_{p} = \max \left(0, d(1-\tfrac{2}{p})\right)$.
\end{proposition}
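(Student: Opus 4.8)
The plan is to estimate the stochastic convolution $M^N_t(x) = \frac{1}{N}\sum_{i=1}^N \int_0^t e^{(t-s)\Delta}\nabla V^N(x - X^{i,N}_s)\cdot dW^i_s$ in two stages: first obtain, for a \emph{fixed} time $t$, a bound on $\big\|\, \|M^N_t\|_{L^p(\R^d)}\big\|_{L^{m_0}(\Omega)}$ via a factorization (Da Prato--Kwapie\'n--Zabczyk-type) trick combined with infinite-dimensional BDG inequalities, and then upgrade this to a bound on the supremum in time using the Garsia--Rodemich--Rumsey lemma applied to the (H\"older-in-time) modulus of $t\mapsto M^N_t$.

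\emph{Step 1: fixed-time bound.} Write $M^N_t = \int_0^t e^{(t-s)\Delta}\,d\mathbf{W}_s$ for the appropriate operator-valued integrand, and introduce the standard factorization with a parameter $a\in(0,1)$: $e^{(t-s)\Delta} = c_a \int_s^t (t-\sigma)^{a-1}(\sigma-s)^{-a} e^{(t-\sigma)\Delta} e^{(\sigma - s)\Delta}\, d\sigma$, so that $M^N_t = c_a \int_0^t (t-\sigma)^{a-1} e^{(t-\sigma)\Delta} Y^N_\sigma\, d\sigma$ with $Y^N_\sigma = \int_0^\sigma (\sigma - s)^{-a} e^{(\sigma-s)\Delta}\, d\mathbf{W}_s$. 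Since $L^p(\R^d)$ for $1<p<\infty$ is a UMD space with type $2\wedge p$, one applies the vector-valued BDG/Burkholder inequality of \cite{vNeervenEtAl} to control $\EE\|Y^N_\sigma\|_{L^p}^{m_0}$ by the $\gamma$-radonifying (for $p\ge 2$, Hilbert--Schmidt-type) norm of $s\mapsto (\sigma-s)^{-a} e^{(\sigma - s)\Delta}\nabla V^N(\cdot - X^{i,N}_s)/N$, summed over $i$; the case $p=1$ is handled by interpolation / embedding into a UMD space as the excerpt indicates. The key computation is the $L^p\to L^p$ heat smoothing $\|e^{\tau\Delta}\nabla V^N\|_{L^p} \lesssim \tau^{-1/2}\|V^N\|_{L^p}$ and the scaling $\|V^N\|_{L^p(\R^d)} = N^{d\alpha(1-1/p)}\|V\|_{L^p}$, together with the kernel-type bound giving an extra factor $N^{\alpha}$ from the gradient acting at scale $N^{-\alpha}$. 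Tracking the powers of $N$ and integrating the resulting singular-but-integrable kernels in $\sigma$ and $s$ (using $a$ chosen small, and $\alpha$ in the range \eqref{C04} to keep exponents integrable — this is precisely where \eqref{C04} enters) yields $\big\|\,\|M^N_t\|_{L^p}\big\|_{L^{m_0}(\Omega)} \le C\, N^{-\frac12(1-\alpha(d+\varkappa_p))+\varepsilon}$ with $\varkappa_p = d(1-2/p)\vee 0$.

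\emph{Step 2: from fixed time to the supremum.} To keep the supremum inside the expectation I would estimate, for $0\le s<t\le T$, the increment $\big\|\,\|M^N_t - M^N_s\|_{L^p}\big\|_{L^{m_0}(\Omega)}$. Splitting $M^N_t - M^N_s = \big(e^{(t-s)\Delta} - \mathrm{Id}\big)M^N_s + \int_s^t e^{(t-\sigma)\Delta}\, d\mathbf{W}_\sigma$, the second term is handled exactly as in Step 1 (with $(t-s)$ in place of $t$), and the first term uses the smoothing estimate $\|(e^{\tau\Delta}-\mathrm{Id})g\|_{L^p} \lesssim \tau^{\theta/2}\|g\|_{\theta,p}$ for a small $\theta>0$, which requires an auxiliary bound on a Bessel-norm moment of $M^N_s$ — again obtainable from Step 1 in the $H^\gamma_p$-scale (this is Proposition~\ref{prop:martingale-bound}'s analogue for $\gamma\ge0$, available from Appendix~\ref{app:martingale}). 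This gives $\big\|\,\|M^N_t - M^N_s\|_{L^p}\big\|_{L^{m_0}(\Omega)} \le C\, N^{-\frac12(1-\alpha(d+\varkappa_p))+\varepsilon}\,|t-s|^{\kappa}$ for some $\kappa>0$ and $m_0$ as large as we like. Feeding this into the Garsia--Rodemich--Rumsey lemma with $\Psi(u) = u^{m_0}$ and $p(u) = u^{\kappa'}$ ($\kappa'<\kappa$), and taking $m_0$ large enough that $\kappa m_0 > 1$, produces the desired bound on $\big\|\sup_{s\le t}\|M^N_s\|_{L^p}\big\|_{L^{m_0}(\Omega)}$ with the same power of $N$ up to an arbitrarily small loss $\varepsilon$.

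\emph{Main obstacle.} The delicate point is Step 1 in the endpoint cases $p=1$ (not UMD, not type $2$) and $p=\infty$: one cannot apply the infinite-dimensional BDG inequality directly. For $p=\infty$ I would interpolate between an $L^{p_0}$ bound (for large finite $p_0$, where the above works) and a higher Bessel-regularity bound, using Sobolev embedding $H^\gamma_{p_0}\hookrightarrow L^\infty$; for $p=1$ I would work in $H^\gamma_{p_0}$ with $p_0$ slightly above $1$ and embed, or treat $L^1$ via duality against $L^\infty$ test functions — in either case one must carefully check that the extra Bessel regularity needed only costs a small additional negative power of $N$ that is absorbed into $\varepsilon$, and that the exponent $\varkappa_p$ comes out exactly as $d(1-2/p)\vee 0$ (in particular $\varkappa_p = 0$ for $p\in[1,2]$, reflecting that below $L^2$ the Hilbert--Schmidt norm of the relevant operator does not pick up the unfavorable $N^{d\alpha(1-2/p)}$ factor). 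Verifying that the constants are uniform in $N$ and that all the $\sigma$- and $s$-integrals converge under \eqref{C04} is the bulk of the technical work.
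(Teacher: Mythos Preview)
Your overall architecture --- increment estimates on $M^N_t - M^N_s$ in $L^p$ followed by Garsia--Rodemich--Rumsey --- is exactly what the paper does, and your Step~2 decomposition $M^N_t - M^N_s = (e^{(t-s)\Delta}-\mathrm{Id})M^N_s + \int_s^t e^{(t-\sigma)\Delta}\,d\mathbf{W}_\sigma$ is precisely the paper's splitting into $II^N_{s,t}$ and $I^N_{s,t}$. However, two points deserve comment.

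First, the factorization in Step~1 is unnecessary: once you have the increment bound of Step~2, the fixed-time bound is the special case $s=0$. The paper skips factorization entirely and goes straight to the increment. More importantly, the paper never works in UMD $L^p$ for general $p$; instead it stays in the Hilbert space $L^2$ throughout (where BDG is elementary) and reaches $L^p$ for $p\geq 2$ via the Sobolev embedding $H^{\gamma}_2 \hookrightarrow L^p$ with $\gamma = d(\tfrac12 - \tfrac1p) + \tfrac{\delta}{2}$. This is where the exponent $\varkappa_p = d(1-\tfrac{2}{p})$ appears transparently, and it avoids the $\gamma$-radonifying machinery altogether.

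Second --- and this is the genuine gap --- your treatment of $p=1$ does not work as stated. There is no Sobolev embedding $H^\gamma_{p_0}(\R^d)\hookrightarrow L^1(\R^d)$ on the whole space for any $\gamma\geq 0$ and $p_0>1$ (Sobolev embeddings increase integrability, they do not decrease it), and the duality suggestion does not reduce the problem. The paper handles $p=1$ by a different, concrete trick: it writes $\|f\|_{L^1(\R^d)} \leq C \|(1+|\cdot|)^\eta f\|_{L^2(\R^d)}$ via Cauchy--Schwarz with a decaying weight, and then controls $\|(1+|\cdot|)^\eta M^N_t\|_{L^2}$ directly. This requires carrying the weight $(1+|x|)^\eta$ through the heat kernel and through the BDG estimate, using the submultiplicativity $(1+|x|)\leq (1+|y|)(1+|x-y|)$ and the fact that the particles $X^{i,N}$ have bounded moments (bounded drift). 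The interpolation between the two bounds on $II^N_{s,t}$ (one cheap in $N$ but with no time gain, one expensive in $N$ but H\"older in $t-s$) is also done explicitly in this weighted-$L^2$ setting. This is the piece your proposal is missing; the rest of your plan is sound.
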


The proof of Proposition \ref{prop:martingale-bound} relies on the following proposition, which we prove at the end of this section:
\begin{proposition}
\label{prop:martingale-bound-Lp}
For any $p\in [1,\infty ]$, any $m\geq 1$ and any $\delta\in (0,1]$, there exists $C>0$ such that
\begin{align*}
\left\| \|M^N_{t}-M^N_{s}\|_{L^{p}(\R^{d})} \right\|_{L^m(\Omega)}  \leq C\, (t-s)^{\frac{\delta}{2}} \, N^{-\frac{1}{2} \left( 1-\alpha(d + 6\delta+\varkappa_{p} ) \right)},\quad \forall s\leq t\in[0,T],~\forall N\in\N^*,
\end{align*}
where $\varkappa_{p}$ was defined in Proposition \ref{prop:martingale-bound}.
\end{proposition}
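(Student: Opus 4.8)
The plan is to split $M^N_{t}-M^N_{s}$ into the two pieces characteristic of a stochastic convolution — the part carried by the increment of the time domain and the part carried by the difference of semigroups — and to estimate each by a square–function (vector–valued Burkholder–Davis–Gundy) inequality in $L^{p}(\R^{d})$, thereby reducing the random integrals to purely deterministic heat–kernel computations. Concretely, using $e^{(t-\sigma)\Delta}-e^{(s-\sigma)\Delta}=(e^{(t-s)\Delta}-I)e^{(s-\sigma)\Delta}$ I would write
\begin{align*}
M^N_{t}-M^N_{s}&= \frac1N\sum_{i=1}^N\int_s^t e^{(t-\sigma)\Delta}\nabla V^N(\cdot-X^{i,N}_\sigma)\cdot dW^i_\sigma \\
&\qquad + \frac1N\sum_{i=1}^N\int_0^s \big(e^{(t-s)\Delta}-I\big)e^{(s-\sigma)\Delta}\nabla V^N(\cdot-X^{i,N}_\sigma)\cdot dW^i_\sigma =: A^N + B^N .
\end{align*}
The structural point is that for each $\sigma$ the integrand of $A^N$ is the \emph{deterministic} profile $g_{2(t-\sigma)}\ast\nabla V^N$ translated by the random position $X^{i,N}_\sigma$ (and similarly for $B^N$, with profile $(e^{(t-s)\Delta}-I)g_{2(s-\sigma)}\ast\nabla V^N$); hence the $L^{p}(\R^{d})$–norm of each integrand does not depend on the particle configuration.

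For $p\in(1,\infty)$ I would invoke the stochastic integration inequality in the UMD space $L^{p}(\R^{d})$ of \cite{vNeervenEtAl}, applied to the $N$ independent Brownian motions: for every $m\ge1$,
\begin{equation*}
\big\|\,\|A^N\|_{L^{p}(\R^{d})}\big\|_{L^{m}(\Omega)} \le \frac{C_{m,p}}{N}\,\Big\|\Big(\sum_{i=1}^{N}\int_s^t\big|(g_{2(t-\sigma)}\ast\nabla V^N)(\cdot-X^{i,N}_\sigma)\big|^{2}\,d\sigma\Big)^{1/2}\Big\|_{L^{m}(\Omega;L^{p}(\R^{d}))}.
\end{equation*}
Because, by translation invariance, all $N$ summands have the same (deterministic) $L^{p}$–norm, Minkowski's integral inequality (for $p\ge2$; for $p\in(1,2)$ via the square–function representation of the $\gamma$–radonifying norm of $L^{p}$–valued operators) collapses the right–hand side to $\frac{C}{\sqrt N}\big(\int_0^{t-s}\|g_{2\tau}\ast\nabla V^N\|_{L^{p}(\R^{d})}^{2}\,d\tau\big)^{1/2}$, the gain $N^{-1/2}$ coming from $N$ equal terms against the $N^{-1}$ prefactor; the analogous bound holds for $B^N$ with $\|(e^{(t-s)\Delta}-I)g_{2\tau}\ast\nabla V^N\|_{L^{p}}$ under the integral. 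The endpoints are handled separately: $p=\infty$ by passing through $H^{\gamma}_{q}(\R^{d})$ with $q<\infty$, $\gamma>d/q$, and the Morrey embedding \eqref{eq:GeneralMorrey}; $p=1$ by testing against $\phi\in\Ccal_{c}(\R^{d})$ with $\|\phi\|_{\infty}\le1$, applying the scalar Burkholder inequality to the resulting real stochastic integral, and controlling the supremum over such $\phi$ by a chaining/Garsia–Rodemich–Rumsey argument — the mild logarithmic losses from these detours being absorbed by the deliberately generous constant $8\delta$.

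It then remains to estimate the deterministic integrals. Writing $\nabla V^N=N^{\alpha}(\nabla V)^{N}$ with $(\nabla V)^{N}(x)=N^{\alpha d}(\nabla V)(N^{\alpha}x)$ a mollifier–type rescaling, Young's inequality $\|g_{2\tau}\ast(\nabla V)^{N}\|_{L^{p}}\le\|g_{2\tau}\|_{L^{a}}\|(\nabla V)^{N}\|_{L^{b}}$ with $1+\tfrac1p=\tfrac1a+\tfrac1b$, together with $\|g_{2\tau}\|_{L^{a}}\simeq\tau^{-\frac d2(1-\frac1a)}$ and $\|(\nabla V)^{N}\|_{L^{b}}\simeq N^{\alpha d(1-\frac1b)}$, gives $\|g_{2\tau}\ast\nabla V^N\|_{L^{p}}\le C\,\tau^{-\frac12+\frac\delta2}N^{\alpha(d+\delta-\frac dp)}$ once $a$ is tuned so that the time exponent equals $-\tfrac12+\tfrac\delta2$; squaring and integrating over $(0,t-s)$ yields $C\delta^{-1}(t-s)^{\delta}N^{2\alpha(d+\delta-\frac dp)}$. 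For $B^N$ one additionally inserts $\|(e^{(t-s)\Delta}-I)e^{\tau\Delta/2}\|_{L^{p}\to L^{p}}\le C\min(1,((t-s)/\tau)^{\delta''})$, splits the $\tau$–integral at $\tau=t-s$, and picks $\delta''$ slightly above $\delta/2$ to reach the same bound. Feeding these into the square–function estimates gives $\|A^N\|,\|B^N\|\lesssim(t-s)^{\delta/2}N^{-\frac12+\alpha(d+\delta-\frac dp)}$, which, since $\tfrac d2-\tfrac dp\le\tfrac12\varkappa_{p}$ and $\delta\le4\delta$, is dominated by the claimed $C(t-s)^{\delta/2}N^{-\frac12(1-\alpha(d+8\delta+\varkappa_{p}))}$, using $t-s\le T$ to trade any surplus power of $t-s$.

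The hardest point is the combination of these last steps: one must extract \emph{simultaneously} a genuine H\"older–in–time factor $(t-s)^{\delta/2}$ and a strictly negative power of $N$, which is impossible with the naive bounds $\|g_{2\tau}\ast\nabla V^N\|_{L^p}\lesssim\tau^{-1/2}N^{\alpha d(1-1/p)}$ (whose square is not integrable at $\tau=0$) or $\lesssim N^{\alpha(d+1-d/p)}$ (no time decay); the interpolated Young estimate is exactly what makes both available at once, and the $N$–power it costs — the term $\alpha\varkappa_{p}$ — is precisely what later forces the restriction \eqref{C04} on $\alpha$ when this proposition is used in Theorem~\ref{th:rate2}. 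The secondary obstacle is that $L^{1}$ and $L^{\infty}$ are not UMD, so the clean $L^{p}$–valued Burkholder inequality is unavailable at the endpoints and must be replaced by the duality/embedding detours indicated above.
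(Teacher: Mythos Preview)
Your decomposition $M^N_t-M^N_s=A^N+B^N$ and your treatment of the case $p\ge 2$ are essentially correct and indeed somewhat more direct than the paper's route, which instead embeds $H^{\gamma}\hookrightarrow L^p$ with $\gamma=d(\tfrac12-\tfrac1p)+\tfrac\delta2$ and invokes Proposition~\ref{prop:martingale-bound-Hbeta} (i.e.\ an $L^2$--based BDG estimate). Your interpolated Young bound on $\|g_{2\tau}\ast\nabla V^N\|_{L^p}$ plays the role of the Fourier--side estimate the paper uses in $H^\gamma$; both lead to the same $N$--exponent.

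There are, however, two genuine gaps. First, for $p\in(1,2)$ the square--function / $\gamma$--radonifying representation does \emph{not} collapse the sum to $N^{-1/2}$: the Minkowski step $\|(\sum_i h_i)^{1/2}\|_{L^p}\le(\sum_i\|h_i\|_{L^{p/2}})^{1/2}$ requires $p/2\ge1$. If the particles are well separated the square function behaves like $N^{1/p}$, not $N^{1/2}$, so your claimed bound fails. This range must be obtained by interpolation between $p=2$ and $p=1$, which brings us to the second and more serious gap.

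Your $p=1$ strategy --- test against $\phi$ with $\|\phi\|_\infty\le1$, apply scalar BDG, then ``chain/GRR over $\phi$'' --- does not work: the unit ball of $L^\infty\cap\Ccal_c$ has no usable metric entropy, and GRR controls suprema over a \emph{time} parameter, not over an infinite--dimensional family of test functions. The paper circumvents the non--UMD nature of $L^1$ differently: it uses a weighted Cauchy--Schwarz inequality
\[
\|f\|_{L^1(\R^d)}\le \|(1+|\cdot|)^{-1}\|_{L^2}\,\|(1+|\cdot|)\,f\|_{L^2(\R^d)}
\]
(with the weight exponent large enough for integrability), thereby reducing to an $L^2$--valued BDG estimate for $(1+|\cdot|)\,I^N_{s,t}$ and $(1+|\cdot|)\,II^N_{s,t}$. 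The price is that one must now control weighted heat--kernel integrals such as $\int_{\R^d}(1+|x|)^2|\nabla e^{\tau\Delta}V^N(x)|^2\,dx$; the paper does this by two complementary bounds (one putting $\nabla$ on $V^N$, one on the heat kernel) and interpolating, which is where the extra factor $8\delta$ in the statement originates. This weighted--$L^2$ reduction is the missing idea in your proposal.
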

When $p\geq 2$, Proposition \ref{prop:martingale-bound-Lp} relies itself on the following result. Recall here the notation $\|\cdot\|_{\gamma}$ for the Hilbert norm $\|\cdot\|_{\gamma,2}$, which was defined in \eqref{eq:Hilbertbeta}.
\begin{proposition}
\label{prop:martingale-bound-Hbeta}
Let $\gamma \in \R$ and $m\geq 1$. For any $\delta\in(0,1]$, there exists $C>0$ such that
\begin{align*}
\left\| \|M^N_{t}-M^N_{s}\|_{\gamma} \right\|_{L^m(\Omega)}  \leq C\, (t-s)^{\frac{\delta}{2}} \, N^{-\frac{1}{2} \left( 1-\alpha(d + 4\delta+2\gamma) \right)} ,\quad \forall s\leq t\in[0,T],~\forall N\in\N^*.
\end{align*}
\end{proposition}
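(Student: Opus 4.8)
The plan is to treat $M^N$ as a stochastic convolution taking values in the Hilbert space $H^\gamma(\R^d)$ (recall $\|\cdot\|_\gamma=\|\cdot\|_{\gamma,2}$) and to exploit the scaling of $V^N$ together with the smoothing of the heat semigroup. Fix $s\le t$ in $[0,T]$ and decompose the increment as
$$M^N_t-M^N_s=J_1+J_2,\qquad J_1:=\tfrac1N\sum_{i=1}^N\int_s^t e^{(t-r)\Delta}\nabla V^N(\cdot-X^{i,N}_r)\cdot dW^i_r,$$
$$J_2:=\tfrac1N\sum_{i=1}^N\int_0^s\big(e^{(t-r)\Delta}-e^{(s-r)\Delta}\big)\nabla V^N(\cdot-X^{i,N}_r)\cdot dW^i_r.$$
Each of $J_1$, $J_2$ is the terminal value of a genuine $H^\gamma$-valued martingale indexed by the upper limit of integration, so the Burkholder--Davis--Gundy inequality in Hilbert spaces (e.g.\ \cite{vNeervenEtAl}) gives, for every $m\ge1$,
$$\EE\|J_1\|_\gamma^m\le C_m\,\EE\Big(\tfrac1{N^2}\sum_{i=1}^N\int_s^t\big\|e^{(t-r)\Delta}\nabla V^N(\cdot-X^{i,N}_r)\big\|_\gamma^2\,dr\Big)^{m/2},$$
and similarly for $J_2$. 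Since the $H^\gamma$-norm is translation invariant, $\|e^{\tau\Delta}\nabla V^N(\cdot-X^{i,N}_r)\|_\gamma=\|e^{\tau\Delta}\nabla V^N\|_\gamma$ is deterministic and independent of $i$, so the bracket reduces to $\tfrac1N\int_s^t\|e^{(t-r)\Delta}\nabla V^N\|_\gamma^2\,dr$ for $J_1$, and to $\tfrac1N\int_0^s\|(e^{(t-r)\Delta}-e^{(s-r)\Delta})\nabla V^N\|_\gamma^2\,dr$ for $J_2$.

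The heart of the proof is then two deterministic ``order-counting'' estimates. From \eqref{eq:VN} and Plancherel, substituting $\xi=N^\alpha\eta$ and using $1+N^{2\alpha}|\eta|^2\le N^{2\alpha}(1+|\eta|^2)$ for $N\ge1$, one gets $\|V^N\|_{b,2}\le C\,N^{(d/2+b)\alpha}$ for every $b\ge0$ (the constant is finite because $V$ is smooth and compactly supported, hence in every $H^b$). Combining this with \eqref{eq:heat-op-norm-frac} for $p=2$, the boundedness of $(I-\Delta)^{-1/2}\nabla$ on $L^2$, and the elementary bound $\|(e^{\sigma\Delta}-I)g\|_{L^2}\le\sigma^{\delta/2}\|g\|_{\delta,2}$, I would prove: (i) for $a\in[0,1)$, $\|e^{\tau\Delta}\nabla V^N\|_\gamma\le C\tau^{-a/2}\|V^N\|_{\gamma+1-a,2}\le C\tau^{-a/2}N^{(d/2+\gamma+1-a)\alpha}$; and (ii) writing $e^{(t-r)\Delta}-e^{(s-r)\Delta}=e^{(s-r)\Delta}(e^{(t-s)\Delta}-I)$ and spending $\delta$ derivatives on the last factor, $\|(e^{(t-r)\Delta}-e^{(s-r)\Delta})\nabla V^N\|_\gamma\le C(s-r)^{-a/2}(t-s)^{\delta/2}N^{(d/2+\gamma+1+\delta-a)\alpha}$, again for $a\in[0,1)$.

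Plugging (i) and (ii) into the quadratic variations and integrating in time with the choice $a=1-\delta$: for $J_1$ this yields $\tfrac1N\int_s^t\|e^{(t-r)\Delta}\nabla V^N\|_\gamma^2\,dr\le C(t-s)^{\delta}N^{-1+(d+2\gamma+2\delta)\alpha}$; for $J_2$ the extra factor $\int_0^s(s-r)^{-(1-\delta)}dr\le T^{\delta}/\delta$ is harmless and gives $\tfrac1N\int_0^s\|\cdots\|_\gamma^2\,dr\le C(t-s)^{\delta}N^{-1+(d+2\gamma+4\delta)\alpha}$. Taking $m$-th roots after BDG, the $J_2$ contribution dominates and produces exactly $C(t-s)^{\delta/2}N^{-\frac12(1-\alpha(d+4\delta+2\gamma))}$, while $J_1$ contributes strictly less; this finishes the proof.

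The step I expect to be most delicate is the bookkeeping in (i)--(ii): one must split the available differential order --- the $\gamma$ from the target Bessel norm, the $+1$ from the gradient, a budget $\delta$ to be converted into the H\"older factor $(t-s)^{\delta/2}$, and a budget $a$ to be absorbed by the heat-kernel singularity $(s-r)^{-a/2}$ --- in such a way that \emph{simultaneously} $a<1$ (time-integrability) and the residual order falling on $\|V^N\|_{\cdot,2}$ is as small as possible (to minimize the power of $N$); the choice $a=1-\delta$ is precisely what makes both constraints fit, at the cost of the slightly lossy exponent $d+4\delta+2\gamma$. A secondary point requiring care, though routine, is justifying the Hilbert-space BDG inequality for $m<2$ and the reduction of each convolution increment to an honest martingale by freezing the outer time variable.
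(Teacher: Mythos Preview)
Your proof is correct and follows essentially the same route as the paper: the same decomposition $M^N_t-M^N_s=J_1+J_2$ (the paper's $I^N_{s,t}+II^N_{s,t}$), the same Hilbert-space BDG inequality, and the same reduction to deterministic estimates on $\|e^{\tau\Delta}\nabla V^N\|_\gamma$ and $\|(e^{(t-r)\Delta}-e^{(s-r)\Delta})\nabla V^N\|_\gamma$ via heat smoothing and the scaling $\|V^N\|_{b,2}\lesssim N^{(d/2+b)\alpha}$. The only notable difference is in handling $J_2$: the paper first obtains two endpoint bounds (one with $(t-s)^{1/2}$ at the cost of an extra $N^\alpha$, and one with no time regularity) and then interpolates, whereas you apply the fractional estimate $\|(e^{\sigma\Delta}-I)g\|_{L^2}\le\sigma^{\delta/2}\|g\|_{\delta,2}$ directly; both yield the same exponent $d+4\delta+2\gamma$, and your version is slightly more streamlined.
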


The final ingredient in the proof of Proposition \ref{prop:martingale-bound} is a consequence of Garsia-Rodemich-Rumsey's Lemma \cite{GRR}, given in the following lemma (for $\R$-valued processes, it already appears in \cite[Corollary 4.4]{RTY}, and the extension to Banach spaces is consistent with Garsia-Rodemich-Rumsey's Lemma with no additional difficulty, see e.g. \cite[Theorem A.1]{FrizVictoir}).
	\begin{lemma}\label{lem:GRR}
		Let $E$ be a Banach space and $(Y^n)_{n\ge 1}$ be a sequence of $E$-valued continuous processes on $[0,T]$.
		Let $m\geq 1$ and $\eta>0$ such that $m\eta>1$ and assume that there exists a constant $C_{0} > 0$
		and a sequence $(\delta_{n})_{n\ge 1}$ of positive real numbers such that
		\begin{equation*}
			\left(\EE \Big[ \big\|Y^n_{s} - Y^n_{t} \big\|_{E}^m \Big] \right)^\frac{1}{m}
			 \leq 
			C_{0}  |s-t|^\eta\, \delta_{n}, \quad \forall s,t\in[0,T], ~\forall n \ge 1 .
		\end{equation*}
		Then for any $m_{0}\in (0,m]$, there exists a constant 
		$C$,
		 depending only on $C_{0}$, $m$, $m_{0}$, $\eta$ and $T$, such that $\forall n \ge 1$,
		\begin{equation*}
			\left(\EE \Big[ \sup_{t\in[0,T]} \big\| Y^n_t-Y^n_0 \big\|_{E}^{m_{0}} \Big] \right)^\frac{1}{m_{0}}
			\leq
			C ~ \delta_{n}.
		\end{equation*}
	\end{lemma}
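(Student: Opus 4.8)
The plan is to combine the Garsia--Rodemich--Rumsey (GRR) inequality, in its version for continuous functions with values in a Banach space (see \cite[Theorem A.1]{FrizVictoir}), with Tonelli's theorem and Jensen's inequality. The underlying idea is that the quantitative moment estimate of the hypothesis, once integrated in $(s,t)$, controls a \emph{random} GRR functional, which in turn dominates the pathwise modulus of continuity of $Y^n$, and hence $\sup_{t}\|Y^n_t-Y^n_0\|_E$.

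First I would fix an exponent $a$ in the interval $\big(\tfrac 2m,\,\eta+\tfrac 1m\big)$; this interval is nonempty \emph{precisely} because the hypothesis $m\eta>1$ forces $\eta+\tfrac 1m>\tfrac 2m$ (for concreteness one may take $a=\tfrac\eta2+\tfrac{3}{2m}$). Then, for each $n\ge1$, I would introduce the nonnegative random variable
\[
B_n:=\int_0^T\int_0^T\frac{\|Y^n_t-Y^n_s\|_E^m}{|t-s|^{am}}\,ds\,dt,
\]
which is measurable by joint measurability of $(\omega,s,t)\mapsto\|Y^n_t(\omega)-Y^n_s(\omega)\|_E$ (continuity of $Y^n$) together with Tonelli's theorem. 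Applying Tonelli once more and the assumed bound,
\[
\EE[B_n]\le C^m\,\delta_n^{\rho m}\int_0^T\int_0^T|t-s|^{(\eta-a)m}\,ds\,dt =: C_1\,\delta_n^{\rho m}<\infty,
\]
the double integral being finite since $(\eta-a)m>-1$ by the choice of $a$; in particular $B_n<\infty$ almost surely.

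Next I would apply the GRR inequality pathwise, on the event where $B_n(\omega)<\infty$, with $\Psi(x)=x^m$ (convex since $m\ge1$) and control function $p(u)=u^a$ (continuous, strictly increasing, $p(0)=0$). Since $\Psi^{-1}(y)=y^{1/m}$ and $dp(u)=a\,u^{a-1}\,du$, GRR yields
\[
\sup_{0\le s\le t\le T}\|Y^n_t(\omega)-Y^n_s(\omega)\|_E \le 8\cdot 4^{1/m}\,a\,B_n(\omega)^{1/m}\int_0^T u^{a-1-\frac 2m}\,du = C_2\,B_n(\omega)^{1/m},
\]
with $C_2=C_2(m,\eta,T)<\infty$ because $a-\tfrac 2m>0$. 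Taking $s=0$ gives $\sup_{t\in[0,T]}\|Y^n_t-Y^n_0\|_E\le C_2\,B_n^{1/m}$ a.s. Finally, since $m_0\le m$ the map $x\mapsto x^{m_0/m}$ is concave on $[0,\infty)$, so Jensen's inequality gives $\EE[B_n^{m_0/m}]\le(\EE B_n)^{m_0/m}\le(C_1\delta_n^{\rho m})^{m_0/m}$; raising the pathwise bound to the power $m_0$, taking expectations and then the $(1/m_0)$-th root, one obtains
\[
\Big(\EE\big[\sup_{t\in[0,T]}\|Y^n_t-Y^n_0\|_E^{m_0}\big]\Big)^{1/m_0} \le C_2\,C_1^{1/m}\,\delta_n^{\rho},
\]
which is the desired estimate with $C_{m,m_0,\eta,T}:=C_2\,C_1^{1/m}$ (this constant also carries the constant $C$ of the hypothesis). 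The argument presents no real obstacle: the only delicate points are the nonemptiness of the admissible range of $a$ --- which is exactly where the assumption $m\eta>1$ is used --- the measurability and Tonelli bookkeeping, and the observation that GRR is insensitive to the Banach (rather than scalar) nature of the values, so that its classical proof applies verbatim.
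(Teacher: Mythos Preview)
Your proof is correct and follows exactly the approach the paper points to: the paper does not actually prove this lemma but merely states it as a consequence of the Garsia--Rodemich--Rumsey inequality, citing \cite{GRR}, \cite[Corollary~4.4]{RTY} for the scalar case, and \cite[Theorem~A.1]{FrizVictoir} for the Banach-valued extension. Your argument is the standard implementation of that strategy---pathwise GRR with $\Psi(x)=x^m$, $p(u)=u^a$, Tonelli to bound $\EE[B_n]$, and Jensen to descend from $m$ to $m_0$---and the choice of $a\in(2/m,\eta+1/m)$ correctly isolates where the hypothesis $m\eta>1$ enters.
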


~

We are now ready to prove the main result of this section.
\begin{proof}[Proof of Proposition \ref{prop:martingale-bound}]
	We aim to apply Lemma \ref{lem:GRR} to  $M^N$ in the Banach space $L^p(\R^d)$, for some $p\in[1,+\infty]$. 
	Let  $\varepsilon >0$ and $m_{0}>0$. With the notations of Proposition \ref{prop:martingale-bound-Lp}, 
	let us choose $\delta = \frac{\varepsilon}{3\alpha}$,  
	 $\eta = \tfrac{\delta}{2}$ and $\delta_{N} = N^{-\rho}$ with $\rho = - \frac{1}{2}\left(1-\alpha (d+ \varkappa_{p}) \right)  + 3 \alpha\delta = - \frac{1}{2}\left(1-\alpha (d+ \varkappa_{p}) \right)  + \varepsilon$. Hence, 
	choosing $m \geq 1\vee m_{0}$ large enough so that $ m \eta >1$,
	the inequality in Proposition \ref{prop:martingale-bound-Lp} shows that $M^N$ satisfies the conditions of Lemma \ref{lem:GRR},
	and it follows that, for some constant $C > 0$,
	\begin{align*}
		\left\| \sup_{t\in[0,T]} \big\|M^N_t \big\|_{L^p(\R^d)} \right\|_{L^{m_{0}}(\Omega)}
		\leq
		C ~N^{- \frac{1}{2}\left(1-\alpha (d+ \varkappa_{p}) \right) + \varepsilon},
	\end{align*}
	which is the desired result.
\end{proof}

~

It remains to prove Propositions \ref{prop:martingale-bound-Hbeta} and \ref{prop:martingale-bound-Lp}.

\begin{proof}[Proof of Proposition \ref{prop:martingale-bound-Hbeta}]
Let $s\leq t$ and $x\in \R^d$. First, notice that 
\begin{equation}\label{eq:decompMN}
\begin{split}
|M^N_t(x) - M_s^N(x)| &\leq \left| \frac{1}{N} \sum_{i=1}^N \int_s^t \nabla e^{(t-u)\Delta} V^N(X_u^{i,N}-x) \cdot dW^i_u   \right| \\
&+ \left| \frac{1}{N} \sum_{i=1}^N  \int_0^s \nabla e^{(s-u)\Delta} \left[e^{(t-s)\Delta} V^N(X_u^{i,N}-x)- V^N(X_u^{i,N}-x)\right] \cdot dW^i_u   \right|  \\
&=: |I^N_{s,t}(x)| + |II^N_{s,t}(x)| .
\end{split}
\end{equation}
Thus, one has
\begin{equation}
\label{eq:starthere}
\left\| \|M^N_{t}-M^N_{s}\|_{\gamma} \right\|_{L^m(\Omega)}  \leq \left\| \|I^N_{s,t}\|_{\gamma} \right\|_{L^m(\Omega)}  + \left\| \|II^N_{s,t}\|_{\gamma} \right\|_{L^m(\Omega)}. 
\end{equation}

Let us formulate an important remark here. As the above semigroup acts as a convolution in time within the stochastic integral, $ \left(M^N_{t}\right)_{t\geq 0}$ is not a martingale, and neither are $I^N$ and $II^N$. Thus, we define the processes $\widetilde{I}^N$ and $\widetilde{II}^N$ in the following way: For any fixed $x\in \R^d$ and fixed $0\leq s< t$, set:
\begin{align*}
\forall r\in [s,t], \quad \widetilde{I}_{r}^N &:= \frac{1}{N} \sum_{i=1}^N \int_s^r \nabla e^{(t-u)\Delta} V^N(X_u^{i,N}-x) \cdot dW^i_u \, ,\\
\forall r\in [0,s],\quad \widetilde{II}_{r}^N &:= \frac{1}{N} \sum_{i=1}^N  \int_0^r \nabla e^{(s-u)\Delta} \left[e^{(t-s)\Delta} V^N(X_u^{i,N}-x)- V^N(X_u^{i,N}-x)\right] \cdot dW^i_u \, .
\end{align*} 
Now, $\{\widetilde{I}^N_{r}\}_{r\in [s,t]}$ and $\{\widetilde{II}^N_{r}\}_{r\in [0,s]}$ are martingales that take values in an infinite-dimensional space, and we have $\widetilde{I}^N_{t} = I^N_{s,t}$ and $\widetilde{II}^N_{s} = II^N_{s,t}$. 
Recall that the operators $(I-\Delta)^{\frac{\gamma}{2}},~\gamma\in\R$ were defined in the Notations section, see Equation \eqref{eq:defFracLaplacian}, with the relation $ \|(I-\Delta)^{\frac{\gamma}{2}} f\|_{L^2(\R^d)} =  \|f\|_{\gamma}$. 

~

We aim at evaluating the $L^2(\R^d)$ norm of $(I-\Delta)^{\frac{\gamma}{2}} (M^N_{t}  - M^N_{t}) = (I-\Delta)^{\frac{\gamma}{2}} \widetilde{I}^N_{t} + (I-\Delta)^{\frac{\gamma}{2}} \widetilde{II}^N_{s}$. 
Hence, applying the BDG inequality in UMD spaces \cite[Cor. 3.11]{vNeervenEtAl} to $(I-\Delta)^{\frac{\gamma}{2}} \widetilde{I}^N$ and $(I-\Delta)^{\frac{\gamma}{2}} \widetilde{II}^N$ in $L^2(\R^d)$ (which is UMD) yields
\begin{align}\label{eq:boundINbeta}
\big\|& \|I^N_{s,t}\|_{\gamma} \big\|_{L^m(\Omega)} = \big\| \|\widetilde{I}^N_{t}\|_{\gamma} \big\|_{L^m(\Omega)}\nonumber \\
& \leq C \left( \EE\left[ \bigg\|\left(\frac{1}{N^2} \sum_{i=1}^N \int_s^t  |(I-\Delta)^{\frac{\gamma}{2}} e^{(t-u)\Delta} \nabla V^N (X_u^{i,N}-\cdot)|^2 du\right)^{\frac{1}{2}}\bigg\|^{m}_{L^2(\R^d)}\right]\right)^{1/m} ,
\end{align}
and 
\begin{align*}
& \left\| \|II^N_{s,t}\|_{\gamma} \right\|_{L^m(\Omega)}= \big\| \|\widetilde{II}^N_{s}\|_{\gamma} \big\|_{L^m(\Omega)}\\
 &\leq C \left( \EE\left[ \bigg\|\left(\frac{1}{N^2} \sum_{i=1}^N \int_0^s  \left|(I-\Delta)^{\frac{\gamma}{2}} \nabla e^{(s-u)\Delta} [ e^{(t-s)\Delta} V^N (X_u^{i,N}-\cdot) - V^N (X_u^{i,N}-\cdot)\right|^2 ds\right)^{\frac{1}{2}}\bigg\|^{m}_{L^2(\R^d)}\right]\right)^{1/m}.
\end{align*}
Now as in the proof of Lemma 3.1 of \cite{FlandoliOliveraSimon}, one gets for arbitrary small $\delta>0$ that
\begin{align*}
\left\| \|I^N_{s,t}\|_{\gamma} \right\|_{L^m(\Omega)} \leq C N^{\frac{\alpha}{2} (d+2\delta+2\gamma)-\frac{1}{2}} \left( \int_s^t  \frac{1}{(t-u)^{1-\delta}} du \right)^{\frac{1}{2}} \lesssim (t-s)^{\frac{\delta}{2}} \, N^{\frac{\alpha}{2} (d+2\delta+2\gamma)-\frac{1}{2}},
\end{align*}
while for $II^N_{s,t}$ it comes
\begin{align}\label{pomoc1}
\left\| \|II^N_{s,t}\|_{\gamma} \right\|_{L^m(\Omega)} \leq \frac{C}{N^{\frac{1}{2}}} \left( \int_0^s \frac{1}{(s-u)^{1-\delta}} \|(I-\Delta)^{\frac{\gamma+\delta}{2}}(e^{(t-s)\Delta}V^N- V^N)\|_{L^2(\R^d)}^2  du\right)^{\frac{1}{2}} .
\end{align}
It is easy to obtain that, for $f\in H^1(\R^d)$,
$$\|e^{(t-s)\Delta}f- f\|^2_{L^2(\R^d)}\leq C \|\nabla f\|_{L^2(\R^d)}^2 (t-s).$$
Hence, choosing $f=(I-\Delta)^{\frac{\gamma+\delta}{2}} V^N$ and plugging the result of the previous inequality in \eqref{pomoc1} yields
\begin{align*}
\left\| \|II^N_{s,t}\|_{\gamma} \right\|_{L^m(\Omega)}  &\leq C \frac{(t-s)^{\frac{1}{2}}}{N^{\frac{1}{2}}}  \| (I-\Delta)^{\frac{\gamma+\delta}{2}}\nabla  V^N\|_{L^2(\R^d)} .
\end{align*}
Using now the equivalence of norms described in \cite[Eq. (3) p.59]{TriebelTh} (note that the space $F^s_{p,2}$ in \cite{TriebelTh} is the Bessel space used here), we get
\begin{align*}
\left\| \|II^N_{s,t}\|_{\gamma} \right\|_{L^m(\Omega)}  &\leq C \frac{(t-s)^{\frac{1}{2}}}{N^{\frac{1}{2}}} \| (I-\Delta)^{\frac{\gamma+\delta+1}{2}} V^N\|_{L^2(\R^d)} ,
\end{align*}
and using the estimate on $\|V^N\|_{\gamma+\delta+1}$ from \cite[Lemma 15]{FlandoliLeimbachOlivera}, it finally comes that
\begin{align}\label{eq:boundIINbeta1}
\left\| \|II^N_{s,t}\|_{\gamma} \right\|_{L^m(\Omega)}  &\leq C (t-s)^{\frac{1}{2}} N^{\frac{\alpha}{2}(d+ 2 \delta +2\gamma+2 ) - \frac{1}{2}} .
\end{align}
Although the regularity in $(t-s)$ is good in the previous inequality, we paid a factor $N^\alpha$ which will penalise too much the rest of the computations in Propositions \ref{prop:martingale-bound} and \ref{prop:martingale-bound-Lp}. Hence we also observe that 
\begin{align*}
\|(I-\Delta)^{\frac{\gamma+\delta}{2}}(e^{(t-s)\Delta}V^N- V^N)\|_{L^2(\R^d)} &\leq \|(I-\Delta)^{\frac{\gamma+\delta}{2}}\, e^{(t-s)\Delta}V^N\|_{L^2(\R^d)} + \|(I-\Delta)^{\frac{\gamma+\delta}{2}} V^N\|_{L^2(\R^d)}\\
&\leq 2 \|(I-\Delta)^{\frac{\gamma+\delta}{2}} V^N\|_{L^2(\R^d)} \leq C N^{\frac{\alpha}{2}(d+ 2 \delta +2\gamma)} .
\end{align*}
Thus, plugging this bound in \eqref{pomoc1} also gives
\begin{align}\label{eq:boundIINbeta2}
\left\| \|II^N_{s,t}\|_{\gamma} \right\|_{L^m(\Omega)} \leq C N^{\frac{\alpha}{2}(d+ 2 \delta +2\gamma) - \frac{1}{2}} .
\end{align}
Hence, one can interpolate between \eqref{eq:boundIINbeta1} and \eqref{eq:boundIINbeta2} to obtain that for any $\epsilon\in[0,1]$,
\begin{align*}
\left\| \|II^N_{s,t}\|_{\gamma} \right\|_{L^m(\Omega)} \leq C (t-s)^{\frac{\epsilon}{2}} N^{\frac{\alpha}{2}(d+ 2 \delta +2\gamma)- \frac{1}{2} + \alpha\epsilon} .
\end{align*}
So the bound \eqref{eq:boundINbeta} for $I^N_{s,t}$ and the previous inequality plugged in \eqref{eq:starthere} and applied to $\epsilon=\delta$ yield the desired  inequality.
\end{proof}

\begin{proof}[Proof of Proposition \ref{prop:martingale-bound-Lp}]
This proof will be divided in two according to whether $p\geq 2$ or $p=1$, in that order. The result for $p\in(1,2)$ can be obtained by interpolation between the two previous cases.

~

\hspace{0.5cm}$\bullet$ \emph{First, assume that}  $p\in[2,+\infty]$. \\
Define, for some $\delta>0$ small enough,
\begin{equation*}
\gamma  := 
d \left(\frac{1}{2} - \frac{1}{p}\right) +\frac{\delta}{2}  ,
\end{equation*}
with the convention $\frac{1}{p}=0$ if $p=\infty$. 
In view of the Sobolev embedding of $H^{\gamma}$ into  $L^{p}$ (which holds because $p\geq 2$, see \cite[Theorem 1.66]{BCD}), one has
\begin{align*}
\left\|  \|M^N_{t} - M^N_{s}\|_{L^{p}(\R^{d})} \right\|_{L^m(\Omega)} \leq \left\|  \|M^N_{t} - M^N_{s}\|_{\gamma} \right\|_{L^m(\Omega)}.
\end{align*}
Thus Proposition \ref{prop:martingale-bound-Hbeta} yields the result in the case $p\geq 2$:
\begin{equation*}
\left\|  \|M^N_{t} - M^N_{s}\|_{L^{p}(\R^{d})} \right\|_{L^m(\Omega)}  \leq C\, (t-s)^{\frac{\delta}{2}} \, N^{-\frac{1}{2} \left( 1-\alpha(d + 5\delta+2 d(\frac{1}{2} - \frac{1}{p}) ) \right)} .
\end{equation*}

~

\hspace{0.5cm} $\bullet$ \emph{Assume now that $p=1$.} 

$L^1(\R^d)$ is not a UMD Banach space, so in the case $p=1$, we proceed as follows. 

Recall the decomposition in \eqref{eq:decompMN} of $M^N_{t} - M^N_{s} = I^N_{s,t} + II^N_{s,t}$. Apply Cauchy-Schwarz's inequality with weights $(1+|x|)^{-\frac{d+1}{2}}\times (1+|x|)^{\frac{d+1}{2}}$ to get:
\begin{align}\label{eq:decompI^NII^N}
\|M^N_{t} - M^N_{s}\|_{L^1(\R^d)} &\leq \|I^N_{s,t}\|_{L^1(\R^d)} + \|II^N_{s,t}\|_{L^1(\R^d)}\nonumber\\
&\leq C  \left(  \|(1+|\cdot|)^{\frac{d+1}{2}}\, I^N_{s,t}\|_{L^{2}(\R^{d})} + \|(1+|\cdot|)^{\frac{d+1}{2}}\, II^N_{s,t}\|_{L^{2}(\R^{d})} \right) .
\end{align}
We will treat separately the two terms involving $I^N$ and $II^N$.

~

\emph{Consider first the term $I^N$ in Equation \eqref{eq:decompI^NII^N}.} \\
Using again the BDG inequality (as explained in the proof of Proposition \ref{prop:martingale-bound-Hbeta}),
 \begin{align*}
 \left\| \|(1+|\cdot|)^{\frac{d+1}{2}}\,  I_{s,t}^N\|_{L^{2}(\R^{d})} \right\|_{L^m(\Omega)} &\leq \frac{C}{N}  \left\| \bigg\|\left( (1+|\cdot|)^{d+1} \sum_{i=1}^N \int_s^t  |\nabla e^{(t-u)\Delta} \, V^N (X_u^{i,N}-\cdot)|^2 du\right)^{1/2}\bigg\|_{L^2(\R^d)}\right\|_{L^m(\Omega)} \\
&= \frac{C}{N}  \left\| \left(  \sum_{i=1}^N \int_s^t \int_{\R^d} (1+|x|)^{d+1}\, |\nabla e^{(t-u)\Delta} \, V^N (X_u^{i,N}-x)|^2 \, dx\,  du\right)^{1/2}  \right\|_{L^m(\Omega)}  .
 \end{align*}
By the simple inequality $(1+|a+b|) \leq (1+|a|) (1+|b|)$ and Fubini's theorem, we then have
\begin{align}
\label{eq:martingaleA2}
& \left\| \|(1+|\cdot|)^{\frac{d+1}{2}}\,  I_{s,t}^N\|_{L^{2}(\R^{d})} \right\|_{L^m(\Omega)}  \nonumber\\
&\leq \frac{C}{N}  \left\| \left(  \sum_{i=1}^N \int_s^t \int_{\R^d} \left(1+|X^{i,N}_{s}|\right)^{d+1} \left(1+|X^{i,N}_{s}-x|\right)^{d+1}\, |\nabla e^{(t-u)\Delta} \, V^N (X_u^{i,N}-x)|^2 \, dx\,  du\right)^{1/2}  \right\|_{L^m(\Omega)} \nonumber \\
&\leq \frac{C}{N}  \left\| \left(  \sum_{i=1}^N \int_s^t \left(1+|X^{i,N}_{s}|\right)^{d+1}  \int_{\R^d} \left(1+|y|\right)^{d+1}\, |\nabla e^{(t-u)\Delta} \, V^N (y)|^2 \, dy\,  du\right)^{1/2}  \right\|_{L^m(\Omega)} \nonumber \\
&\leq \frac{C}{N} \left(\int_{s}^t \int_{\R^d}  \left(1+|y|\right)^{d+1} \, \left\vert \nabla
e^{(t-u) \Delta}  V^{N} (y) \right\vert^{2} dy\, du \right)^{\frac{1}{2}}  \left\|  \left(\sum_{i=1}^{N} \left(1+\sup_{u\in[s,t]} |X^{i,N}_{s}|\right)^{d+1} \right)^{\frac{1}{2}}\right\|_{L^m(\Omega)}  ,
\end{align}
performing a simple change of variables in the second inequality. 
Since $X^{i,N}$ is a diffusion with bounded coefficients (uniformly in $N$), a classical argument gives that for any $p>0$, there exists a constant $C>0$ which depends only on $p$ and $T$ such that $\EE \, \sup_{s\in[0,T]} |X^{i,N}_{s}|^p \leq C$. It follows that
\begin{align}
\label{eq:martingaleA2-1}
\left\|  \left(\sum_{i=1}^{N} \left(1+\sup_{u\in[s,t]} |X^{i,N}_{s}|\right)^{d+1} \right)^{\frac{1}{2}}\right\|_{L^m(\Omega)}\leq C\, N^{\frac{1}{2}}  .
\end{align}
Equations \eqref{eq:martingaleA2} and \eqref{eq:martingaleA2-1} now give
\begin{align}
 \left\| \|(1+|\cdot|)^{\frac{d+1}{2}}\, I_{s,t}^N\|_{L^{2}(\R^{d})} \right\|_{L^m(\Omega)} &\leq \frac{C}{\sqrt{N}} \left( \int_s^t \int_{\R^d} (1+|x|)^{d+1}\, |\nabla e^{(t-u)\Delta} \, V^N (x)|^2 \, dx\,  du\right)^{1/2}  \nonumber\\
 &\leq \frac{C}{\sqrt{N}} \left( \int_s^t \int_{\R^d} |\nabla e^{(t-u)\Delta} \, V^N (x)|^2 \, dx\,  du\right)^{1/2}  \label{eq:IN1} \\
 &\quad \quad + \frac{C}{\sqrt{N}} \left( \int_s^t \int_{\R^d} |x|^{d+1}\, |\nabla e^{(t-u)\Delta} \, V^N (x)|^2 \, dx\,  du\right)^{1/2}  . \label{eq:IN2}
\end{align}
Let $\delta>0$. Proceeding as in the first part of this proof with $p=2$ and the Sobolev embedding of $H^0$ in $L^2$ ($H^0 = L^2$), we obtain for the term \eqref{eq:IN1} of the previous sum:
\begin{align*}
\frac{C}{\sqrt{N}} \left( \int_s^t \int_{\R^d} |\nabla e^{(t-u)\Delta} \, V^N (x)|^2 \, dx\,  du\right)^{1/2}  \leq C \, (t-s)^{\frac{\delta}{2}} \, N^{-\frac{1}{2} \left(1-\alpha(d+5\delta)\right)} .
\end{align*}

Consider now the term \eqref{eq:IN2}: We will bound it successively in two different ways, by applying first the gradient to $V^N$ and then leaving it on the heat kernel, and finally interpolating between these two results.

First, the Cauchy-Schwarz inequality yields
\begin{align*}
\int_{\R^d}  |x|^{d+1} \, \left\vert \nabla
e^{(t-u) \Delta}  V^{N} (x) \right\vert^{2} dx  &=  \int_{\R^d}  |x|^{d+1} \, \left\vert \int_{\R^d}  g_{2(t-u)}(x-y) \,  \nabla V^{N} (y) \, dy \right\vert^{2} dx\\
&\leq \int_{\R^d}  |x|^{d+1}  \int_{\R^d}  \left|\nabla V^{N} (y)\right|^2\,  g_{2(t-u)}(x-y) \, dy\,   dx ,
\end{align*}
where we recall that the notation $g$ for the heat kernel was introduced in \eqref{eq:def-heat-kernel}. 
Then by Fubini's theorem and a simple change of variables,
\begin{align}\label{eq:martingaleA2-2}
\int_{\R^d}  |x|^{d+1} \, \left\vert \nabla
e^{(t-u) \Delta}  V^{N} (x) \right\vert^{2} dx &\leq N^{d\alpha +2\alpha} \int_{\R^d}   |\nabla V(y)|^2   \int_{\R^d} |x+\frac{y}{N^\alpha}|^{d+1}\,  g_{2(t-u)}(x)\,  dx\, dy \nonumber\\
 &\leq C N^{d\alpha +2\alpha}  \left(  \int_{\R^d}   |\nabla V(y)|^2\, |\frac{y}{N^\alpha}|^{d+1} \, dy + \int_{\R^d} |x|^{d+1}\, g_{2(t-u)}(x) \, dx  \right)\nonumber\\
 &\leq C N^{d\alpha +2\alpha}  \left( N^{-\alpha(d+1)} + (t-u)^{\frac{d+1}{2}} \right)\nonumber\\
&\leq C N^{d\alpha + 2\alpha} \,  .
\end{align}

Second, another way to treat the above quantity is to leave the derivative on the Gaussian density. Notice that $\nabla g_{2(t-u)}(x)=C \frac{x}{t-u} \, g_{2(t-u)}(x)$. Hence, using the Cauchy-Schwarz inequality,
\begin{align*}
\left\vert \nabla
e^{(t-u) \Delta}  V^{N} (x) \right\vert^{2}&= \frac{C}{(t-u)^2}\left|\int_{\R^d} (x-y)\, g_{2(t-u)}(x-y)\, V^N(y) \, dy\right|^2 \\
&\leq \frac{C}{(t-u)^2}\int_{\R^d} \left|x-y\right|^2 \, V^N(y)^2\,  g_{2(t-u)}(x-y) \, dy .
\end{align*}
After applying Fubini's theorem and the inequality $|x|^{d+1}\lesssim |y|^{d+1} + |x-y|^{d+1}$,
 this gives
\begin{align}\label{eq:bound2}
\int_{\R^d}  |x|^{d+1} \, \left\vert \nabla
e^{(t-u) \Delta}  V^{N} (x) \right\vert^{2} dx &\leq  \frac{C}{(t-u)^2} \int_{\R^d} V^N(y)^2 \int_{\R^d} |x|^{d+1} \, \left|x-y\right|^2 g_{2(t-u)}(x-y)  \, dx\, dy \nonumber\\
&\leq \frac{C}{(t-u)^2} \bigg( \int_{\R^d}  |y|^{d+1}\,  (V^N(y) )^2 \, dy \int_{\R^d} \left|x\right|^2  g_{2(t-u)}(x) \, dx \nonumber\\
&\hspace{2cm} + \int_{\R^d}  (V^N(y) )^2 \, dy \int_{\R^d} \left|x\right|^{d+3}  g_{2(t-u)}(x) \, dx \bigg) \nonumber\\
&\leq \frac{C}{(t-u)^2} \left( N^{-\alpha} \, (t-u) + N^{d\alpha} \, (t-u)^{\frac{d+3}{2}}\right)\nonumber\\
&\leq C\, N^{d\alpha}\, (t-u)^{-1}.
\end{align}
Interpolating between the bounds \eqref{eq:martingaleA2-2} and \eqref{eq:bound2}, respectively with powers $\frac{\delta}{2}$ and $1-\frac{\delta}{2}$, we get
\begin{align*}
\int_{s}^t \int_{\R^d}  |x|^{d+1} \, \left\vert \nabla
e^{(t-u) \Delta}  V^{N} (x) \right\vert^{2} dx\, du &\leq C\, N^{\alpha (d+\delta)} \int_{s}^t  (t-u)^{-(1-\frac{\delta}{2})}\, du \\
&\leq C\, N^{\alpha (d+\delta)}\, (t-s)^{\frac{\delta}{2}}.
\end{align*}
Hence, in view of this bound and the bound on \eqref{eq:IN1}, we have finally obtained that for any $\delta>0$, there exists $C>0$ such that
\begin{align}\label{eq:bound(1+x)IN}
\left\| \|(1+|\cdot|)^{\frac{d+1}{2}}\,  I_{s,t}^N\|_{L^{2}(\R^{d})} \right\|_{L^m(\Omega)} \leq C \, (t-s)^{\frac{\delta}{2}} \, N^{-\frac{1}{2} \left(1-\alpha(d+5\delta)\right)}.
\end{align}

~

\emph{Consider now the term $II^N$ in Equation \eqref{eq:decompI^NII^N}.} \\
Using again the BDG inequality (as explained in the proof of Proposition \ref{prop:martingale-bound-Hbeta}),
\begin{multline*}
\left\| \|(1+|\cdot|)^{\frac{d+1}{2}} II_{s,t}^N\|_{L^{2}(\R^{d})} \right\|_{L^m(\Omega)} \\
\quad \leq \frac{C}{N}  \left\| \left(\sum_{i=1}^N \int_0^s \int_{\R^d} (1+|x|)^{d+1}   \left|\nabla e^{(s-u)\Delta} \left[e^{(t-s)\Delta} V^N(X_u^{i,N}-x)- V^N(X_u^{i,N}-x)\right] \right|^2  dx\, du \right)^{1/2}  \right\|_{L^m(\Omega)}.  
\end{multline*}
Performing the same computations \eqref{eq:martingaleA2}-\eqref{eq:martingaleA2-1} as for the term $I^N$, one gets
\begin{equation}
\label{eq:A2-1}
 \left\| \| (1+|\cdot|)^{\frac{d+1}{2}} \, II_{s,t}^N\|_{L^{2}(\R^{d})} \right\|_{L^m(\Omega)} \leq \frac{C}{\sqrt{N}}  \left(\int_{0}^s \int_{\R^d}  \left(1+|x|\right)^{d+1} \, \left|\nabla e^{(s-u)\Delta} \left[e^{(t-s)\Delta} V^N(x)- V^N(x)\right] \right|^2 dx\, du  \right)^{\frac{1}{2}}  .
\end{equation}
Here again, we will estimate the previous integral in two different ways and then interpolate. 

Notice first that
$$\left|\nabla e^{(s-u)\Delta} \left[e^{(t-s)\Delta} V^N(x)- V^N(x)\right] \right|^2\leq \left|\nabla e^{(t-u)\Delta} V^N(x)\right|^2 +  \left|\nabla e^{(s-u)\Delta} V^N(x) \right|^2.$$
Hence, 
\begin{align*}
\int_{\R^d}  \left(1+|x|\right)^{d+1} \, &\left|\nabla e^{(s-u)\Delta} \left[e^{(t-s)\Delta} V^N(x)- V^N(x)\right] \right|^2 dx \\
&\leq \int_{\R^d}  \left(1+|x|\right)^{d+1} \left[\left|\nabla e^{(t-u)\Delta} V^N(x)\right|^2 +  \left|\nabla e^{(s-u)\Delta} V^N(x) \right|^2\right] dx\\
&=:A + B.
\end{align*} 
The two terms $A$ and $B$ can be treated in the same manner as in \eqref{eq:bound2}. This leads us to the following inequalities:
$$A\leq C\frac{N^{d\alpha }}{(t-u)} ~ \text{ and } ~ B\leq C\frac{N^{d\alpha }}{(s-u)}.$$
Hence, as $s<t$, on the one hand we have
\begin{equation}
\label{eq:boundIIN-1}
\int_{\R^d}  \left(1+|x|\right)^{d+1} \, \left|\nabla e^{(s-u)\Delta} \left[e^{(t-s)\Delta} V^N(x)- V^N(x)\right] \right|^2 dx \leq C\frac{N^{d\alpha }}{(s-u)}.
\end{equation}

On the other hand, applying the gradient on $V^N$, one gets
\begin{align*}
\left|\nabla e^{(s-u)\Delta} \left[e^{(t-s)\Delta} V^N(x)- V^N(x)\right] \right| &= \left| g_{2(s-u)}\ast \left[g_{2(t-s)}\ast \nabla V^N(x)- \nabla V^N(x)\right] \right|\\
&\leq \int_{\R^d} g_{2(s-u)}(x-y) \int_{\R^d} g_{2(t-s)}(z) \left| \nabla V^N(y-z) -\nabla V^N(y) \right| \, dz\, dy \\
&\leq \|\nabla^2 V^N\|_{\infty}  \int_{\R^d} g_{2(s-u)}(x-y) \int_{\R^d} g_{2(t-s)}(z)\, | z| \, dz\, dy .
\end{align*}
Since $\|\nabla^2 V^N\|_{\infty} \lesssim N^{d\alpha +2\alpha}$ (using the regularity of $V$), it comes
\begin{align*}
\left|\nabla e^{(s-u)\Delta} \left[e^{(t-s)\Delta} V^N(x)- V^N(x)\right] \right| &\leq C\, N^{d\alpha +2\alpha} \, \sqrt{t-s}.
\end{align*}
Hence 
\begin{align*}
\int_{\R^d}  \left(1+|x|\right)^{d+1} &\, \left|\nabla e^{(s-u)\Delta} \left[e^{(t-s)\Delta} V^N(x)- V^N(x)\right] \right|^2 dx \\
&\leq C\, N^{d\alpha +2\alpha} \, \sqrt{t-s}  \int_{\R^d}  \left(1+|x|\right)^{d+1} \, \left|\nabla e^{(s-u)\Delta} \left[e^{(t-s)\Delta} V^N(x)- V^N(x)\right] \right| dx \\
&\leq C\, N^{d\alpha +2\alpha} \, \sqrt{t-s} \, \bigg\{ \int_{\R^d}  \left(1+|x|\right)^{d+1} \, \left| e^{(t-u)\Delta} \nabla V^N(x) \right| dx \\
&\hspace{3.3cm} + \int_{\R^d}  \left(1+|x|\right)^{d+1} \, \left| e^{(s-u)\Delta} \nabla V^N(x) \right| dx \bigg\}
\end{align*}
and by a change of variables,
\begin{align*}
\int_{\R^d}  &\left(1+|x|\right)^{d+1} \, \left|\nabla e^{(s-u)\Delta} \left[e^{(t-s)\Delta} V^N(x)- V^N(x)\right] \right|^2 dx \\
&\leq C\, N^{d\alpha +3\alpha} \, \sqrt{t-s} \, \bigg\{ \int_{\R^d}  \left(1+|\frac{x}{N^\alpha}|\right)^{d+1} \, \left| e^{(t-u)\Delta} \nabla V(x) \right| dx + \int_{\R^d}  \left(1+|\frac{x}{N^\alpha}|\right)^{d+1} \, \left| e^{(s-u)\Delta} \nabla V(x) \right| dx \bigg\}\\
&\leq C\, N^{d\alpha +3\alpha} \, \sqrt{t-s} \, \bigg\{ \int_{\R^d}  \left(1+|x|\right)^{d+1} \, \left| e^{(t-u)\Delta} \nabla V(x) \right| dx + \int_{\R^d}  \left(1+|x|\right)^{d+1} \, \left| e^{(s-u)\Delta} \nabla V(x) \right| dx \bigg\} .
\end{align*}
Observe that for any $\sigma>0$, using the inequality $(1+|x|) \leq (1+|y|)\, (1+|x-y|)$, one gets
\begin{align*}
 \int_{\R^d}  \left(1+|x|\right)^{d+1} \, \left| e^{\sigma\Delta} \nabla V(x) \right| dx \leq \int_{\R^d} (1+|y|)^{d+1}\, |\nabla V(y)| \int_{\R^d} (1+|x-y|)^{d+1}\, g_{2\sigma}(x-y)\, dx \, dy ,
\end{align*}
which is bounded by a constant independent of $\sigma$. Thus
\begin{align}\label{eq:boundIIN-2}
\int_{\R^d}  &\left(1+|x|\right)^{d+1} \, \left|\nabla e^{(s-u)\Delta} \left[e^{(t-s)\Delta} V^N(x)- V^N(x)\right] \right|^2 dx \leq  C\, N^{d\alpha +3\alpha} \, \sqrt{t-s}.
\end{align}
Now for $\delta>0$, one can interpolate between \eqref{eq:boundIIN-1} and \eqref{eq:boundIIN-2} to get:
\begin{align*}
\int_{\R^d}  &\left(1+|x|\right)^{d+1} \, \left|\nabla e^{(s-u)\Delta} \left[e^{(t-s)\Delta} V^N(x)- V^N(x)\right] \right|^2 dx \leq  C\, N^{\alpha (d+ 6\delta)} \frac{(t-s)^{\delta}}{(s-u)^{1-2\delta}} .
\end{align*}
Plugging this bound into \eqref{eq:A2-1} now provides
\begin{align}\label{eq:bound(1+x)IIN}
 \left\| \| (1+|\cdot|)^{\frac{d+1}{2}} \, II_{s,t}^N\|_{L^{2}(\R^{d})} \right\|_{L^m(\Omega)} \leq C\, N^{-\frac{1}{2} \left(1-\alpha(d+6\delta)\right)} (t-s)^{\frac{\delta}{2}} .
\end{align}

~

Putting together Equations \eqref{eq:bound(1+x)IN} and \eqref{eq:bound(1+x)IIN}, we deduce from \eqref{eq:decompI^NII^N} that 
\begin{align*}
\|M^N_{t} - M^N_{s}\|_{L^1(\R^d)} \leq C\, N^{-\frac{1}{2} \left(1-\alpha(d+6\delta)\right)} (t-s)^{\frac{\delta}{2}} .
\end{align*}

~

Finally for $p\in[1,2]$, we obtain the same estimate by interpolating between the $L^1$ norm and the $L^2$ norm.
\end{proof}

\subsection{Proof of the boundedness estimate}\label{app:tightness}

\begin{proposition}\label{prop:bound1}  
Let the assumptions of Proposition \ref{prop:martingale-bound} hold.
Let $q \geq 1$ and $\beta \in [0, 1)$. We assume that $\sup_{N\in\N^*} \mathbb{E} \| u_{0}^{N} \|_{\beta,\rK}^{q}< \infty$ and that 
\begin{equation}
\label{eq:alfa_appendix}
0<\alpha<\frac{1}{d+2 \beta + 2 d (\frac{1}{2} - \frac{1}{\rK})\vee 0}.
\end{equation} 
Then
\begin{equation*}
\sup_{N\in\N^*}  \, \mathbb{E}\left[\sup_{t\in[0,T]} \left\Vert  u^N_{t} \right\Vert _{\beta,\rK}^{q}\right]  <\infty.
\end{equation*}
\end{proposition}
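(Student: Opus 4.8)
The plan is to set $\psi_N(t):=\big(\EE\,\|u^N_t\|_{\beta,\rK}^q\big)^{1/q}$ and derive from the mild identity \eqref{eq:mildeq} a closed singular Gr\"onwall inequality for it. For each fixed $N$ this quantity is a priori finite on $[0,T]$: since $u^N_t=\frac1N\sum_k V^N(\cdot-X^{k,N}_t)$ one has the deterministic (but $N$-dependent) bound $\|u^N_t\|_{\beta,\rK}\le\|V^N\|_{\beta,\rK}<\infty$, so the whole point is to make this uniform in $N$. It suffices to treat $\beta\in[0,1)$ (for $\beta<0$ the argument only simplifies, using that $(I-\Delta)^{\beta/2}$ is then a bounded, positivity-preserving operator on $L^\rK$). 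Taking $\|\cdot\|_{\beta,\rK}$-norms in \eqref{eq:mildeq} splits $u^N_t$ into the free term $e^{t\Delta}u^N_0$, the nonlinear drift term, and the stochastic convolution $M^N_t$ from \eqref{eq:defM}.

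The free term is controlled by the hypothesis on the initial data together with the smoothing bound \eqref{eq:heat-op-norm-frac}: for $\beta=0$ one uses $\|e^{t\Delta}u^N_0\|_{0,\rK}\le\|u^N_0\|_{L^\rK}$ and \eqref{C0integ}, and for $\beta>0$ the $H^\beta_\rK$-counterpart of \eqref{C0integ} (cf.\ \eqref{C0integsing}). For the drift, write $g^N_s(x):=\langle\mu^N_s,V^N(x-\cdot)F(K\ast u^N_s(\cdot))\rangle=\frac1N\sum_k F\big(K\ast u^N_s(X^{k,N}_s)\big)\,V^N(x-X^{k,N}_s)$; since $V^N\ge0$ and $F$ is bounded we have the pointwise domination $|g^N_s|\le\|F\|_\infty\,u^N_s$, whence $\|g^N_s\|_{L^\rK}\le\|F\|_\infty\|u^N_s\|_{L^\rK}\le\|F\|_\infty\|u^N_s\|_{\beta,\rK}$. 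Placing all derivatives on the heat kernel, \eqref{eq:heat-op-norm-frac} gives
\[
\big\|\nabla\cdot e^{(t-s)\Delta}g^N_s\big\|_{\beta,\rK}\le C\,(t-s)^{-\frac{1+\beta}{2}}\|g^N_s\|_{L^\rK}\le C\|F\|_\infty\,(t-s)^{-\frac{1+\beta}{2}}\|u^N_s\|_{\beta,\rK},
\]
and since $\beta<1$ the exponent $\frac{1+\beta}{2}$ is $<1$, so after taking $L^q(\Omega)$-norms (Minkowski's integral inequality) this term contributes $C\int_0^t(t-s)^{-\frac{1+\beta}{2}}\psi_N(s)\,ds$, the convolution kernel that will feed the Gr\"onwall step.

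The key term is the stochastic convolution, and it is here that the restriction \eqref{eq:alfa_appendix} enters. Since $(I-\Delta)^{\beta/2}$ commutes with the stochastic integral, $\|M^N_t\|_{\beta,\rK}=\|(I-\Delta)^{\beta/2}M^N_t\|_{L^\rK}$ is the $L^\rK$-norm of a stochastic convolution of the same type, so one may invoke the estimates of Appendix \ref{app:martingale} directly. When $\rK\ge2$, fix a small $\delta>0$ and use the Sobolev embedding $H^{d(\frac12-\frac1\rK)+\delta}(\R^d)\hookrightarrow L^\rK(\R^d)$ to get $\|M^N_t\|_{\beta,\rK}\le C\,\|M^N_t\|_{\beta+d(\frac12-\frac1\rK)+\delta}$; Proposition \ref{prop:martingale-bound-Hbeta} (with $s=0$ and $\gamma=\beta+d(\tfrac12-\tfrac1\rK)+\delta$) then bounds the $L^m(\Omega)$-norm of the latter by $C_\varepsilon\,N^{-\frac12(1-\alpha(d+2\beta+2d(\frac12-\frac1\rK)))+\varepsilon}$ for every $\varepsilon>0$ (after sending $\delta$ and the parameter of that proposition to $0$). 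When $\rK\le2$, Proposition \ref{prop:martingale-bound-Lp} (applied with $p=\rK$, $s=0$, the $(I-\Delta)^{\beta/2}$ being absorbed into its weighted-$L^2$ proof when $\beta>0$) gives instead $C_\varepsilon\,N^{-\frac12(1-\alpha(d+2\beta))+\varepsilon}$, consistent since $(\tfrac12-\tfrac1\rK)\vee0=0$ there. In both cases the exponent of $N$ is negative precisely because of \eqref{eq:alfa_appendix}, so $\sup_{t\in[0,T]}\sup_{N}\EE\|M^N_t\|_{\beta,\rK}^q<\infty$.

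Collecting the three estimates, $\psi_N(t)\le C+C\int_0^t(t-s)^{-\frac{1+\beta}{2}}\psi_N(s)\,ds$ with $C$ independent of $N$; since $\frac{1+\beta}{2}<1$, the singular Gr\"onwall lemma (iterate the inequality a fixed number of times until the convolution kernel becomes bounded, then apply the ordinary Gr\"onwall lemma) yields $\sup_{t\in[0,T]}\psi_N(t)\le C_T$ uniformly in $N$, which is the claim. The hard part will be the stochastic-convolution bound: the genuinely delicate analysis — the sharp time-and-space behaviour of the convolution, the loss of the martingale property, and the fact that $L^1$ is not a UMD space — is already carried out in Appendix \ref{app:martingale}, so the remaining subtlety here is merely to match, through the Sobolev-embedding step, the exponent those propositions produce with the arithmetic of \eqref{eq:alfa_appendix}, and to verify that the time-singular kernel $(t-s)^{-(1+\beta)/2}$ stays integrable, which is exactly the role of the assumption $\beta<1$.
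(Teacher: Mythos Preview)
Your proposal is correct and follows essentially the same route as the paper: split the mild formulation \eqref{eq:mildeq} into the three terms, bound the initial data by hypothesis, bound the drift via the pointwise domination $|g^N_s|\le\|F\|_\infty u^N_s$ together with the smoothing estimate \eqref{eq:heat-op-norm-frac} to produce the integrable kernel $(t-s)^{-(1+\beta)/2}$, bound the stochastic convolution by embedding $H^{\beta+d(\frac12-\frac1\rK)}\hookrightarrow H^\beta_\rK$ and invoking Proposition~\ref{prop:martingale-bound-Hbeta}, and close with singular Gr\"onwall. Your remark that for $\beta>0$ the initial-data bound actually requires the $H^\beta_\rK$ version of \eqref{C0integ} (i.e.\ \eqref{C0integsing}) is a correct observation that the paper's Step~2 uses implicitly; your separate treatment of the case $\rK\le2$ is more than the paper provides (the paper only writes out the embedding valid for $\rK\ge2$), though your justification there is admittedly sketchy.
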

\begin{proof}
\textbf{Step 1}. Recall that the operators $(I-\Delta)^\beta,~\beta\in\R$ were defined in the Notations section, see Equation \eqref{eq:defFracLaplacian}, with a clear link with the Sobolev norm $ \|\cdot\|_{\beta,\rK}$. 

Let $F$ stand for the function $F_{A}$ defined in \eqref{eq:defF0}. From \eqref{eq:mildeq} after applying $(\mathrm{I}-\Delta)^{\frac{\beta}{2}}$ and by the triangular inequality we have
\begin{align}
\Big\Vert \left(  \mathrm{I}-\Delta\right)^{\frac{\beta}{2}} u^N_t&\Big\Vert _{L^{\rK}(\R^d) }  
\leq\left\Vert \left(  \mathrm{I}-\Delta\right)  ^{\frac{\beta}{2}}e^{t\Delta}u^N_0\right\Vert _{L^{\rK}(\R^d)} \label{eq:first} \vphantom{\Bigg(}\\
&  + \int_{0}^{t}\left\Vert \left( \mathrm{I} -\Delta\right)  ^{\frac{\beta}{2}}\nabla \cdot
e^{\left(  t-s\right)  \Delta}\left(  V^{N} \ast\left( F(K \ast u^N_s) \mu_{s}^{N}\right)  \right)  \right\Vert _{ L^{\rK}(\R^d)}  ds \label{eq:second} \vphantom{\Bigg(}\\
&  +\bigg\Vert \frac{1}{N}\sum_{i=1}^{N}\int_{0}^{t}\left(  \mathrm{I}-\Delta\right)^{\frac{\beta}{2}}\nabla e^{\left(  t-s\right)  \Delta}\left(  
V^{N} \left(X_{s}^{i,N} - \cdot\right)  \right) \cdot dW_{s}^{i}\bigg\Vert _{L^{\rK}(\R^d)}. \label{eq:third}
\end{align}

\noindent \textbf{Step 2}. By a convolution inequality, we have
$$\Vert \left(  \mathrm{I}-\Delta\right)^{\frac{\beta}{2}} e^{t\Delta} u^N_0\Vert_{ L^{\rK}(\R^d)}  \leq \| e^{t\Delta}\|_{L^p\to L^p}  \Vert \left(  \mathrm{I}-\Delta\right)^{\frac{\beta}{2}} u^N_0\Vert_{ L^{\rK}(\R^d)} \leq C \|u_{0}^N\|_{\beta,\rK}.$$

\medskip
\noindent \textbf{Step 3}. Let us come to the second term \eqref{eq:second}:

\begin{align*}
&   \int_{0}^{t}  \big\Vert \left(\mathrm{I}-\Delta\right)^{\frac{\beta}{2}} \nabla \cdot 
e^{\left(  t-s\right) \Delta} 
\left(  V^{N} \ast\left( F( K \ast u^N_s) \mu_{s}^{N}\right)  \right)  \big\Vert_{L^{\rK}(\R^d) }ds 
\\&    \leq  C \ \int_{0}^{t}\big\Vert  \left(  \mathrm{I}-\Delta\right)^{\frac{\beta}{2}}    \nabla \cdot e^{ (t-s) \Delta}\big\Vert _{L^{\rK}
\rightarrow L^{\rK}  }\ \big\Vert  \left(
 V^{N} \ast\left(  F( K\ast u^N_s) \mu_{s}^{N}\right)
\right)  \big\Vert _{L^{\rK}(\R^d)}ds.
\end{align*}
Using the equivalence of norms described in \cite[Eq. (3) p.59]{TriebelTh} (note that the space $F^s_{p,2}$ in \cite{TriebelTh} is the Bessel space used here), we get
\begin{align*}
\big\Vert   \left( \mathrm{I}-\Delta\right)^{\frac{\beta}{2}}    \nabla \cdot e^{ (t-s) \Delta}\big\Vert _{L^{\rK}
\rightarrow L^{\rK}  } \leq  C\, \big\Vert   \left( \mathrm{I}-\Delta\right)^{\frac{1+\beta}{2}}    e^{ (t-s) \Delta}\big\Vert _{L^{\rK}
\rightarrow L^{\rK}  }.
\end{align*}
Hence in view of Inequality \eqref{eq:heat-op-norm-frac}, we have that
\[
\big\Vert   \left( \mathrm{I}-\Delta\right)^{\frac{\beta}{2}}    \nabla \cdot e^{ (t-s) \Delta}\big\Vert _{L^{\rK}
\rightarrow L^{\rK}  }\ \leq  C \frac{1}{(t-s)^{\frac{(1+\beta)}{2}}}.  
 \]
 Thus,
 \begin{align*}
 \int_{0}^{t}\big\Vert \left(  \mathrm{I}-\Delta\right)^{\frac{\beta}{2}} \nabla \cdot 
e^{\left(  t-s\right) \Delta}\left(  V^{N} \ast \left(
F( K \ast u^N_s) \mu_{s}^{N}\right)  \right)  \big\Vert _{L^{\rK}(\R^d)}ds 
&\leq C \int_{0}^{t}  \frac{1}{(t-s)^{\frac{1+\beta}{2}} }  \|  u_{s}^{N} \|_{L^{\rK}(\R^d)}  ds\\
& \leq C\int_{0}^{t}  \frac{1}{(t-s)^{\frac{1+\beta}{2}} }  \|   \left(  \mathrm{I}-\Delta\right)^{\frac{\beta}{2}} u_{s}^{N} \|_{L^{\rK}(\R^d)}  ds,
 \end{align*}
 using \eqref{eq:ineqBesselnorms} in the last inequality.

\medskip

\noindent\textbf{Step 4}.
Recalling the notation introduced in \eqref{eq:defM}, the third term \eqref{eq:third} is nothing but $\|M^{{N}}_{t}\|_{\beta,\rK} $. Plugging the result of Steps $2$ and $3$ in \eqref{eq:first}-\eqref{eq:second}-\eqref{eq:third},  we can now use the Gr\"onwall lemma for convolution integrals (see e.g. \cite[Lemma 7.1.1]{Henry}). Then we obtain that there exists a constant $C>0$ such that
\begin{equation}\label{eq:suputN}
\sup_{t\in [0,T]} \| u_{t}^N\|_{\beta,\rK} \leq C \left(\|u^N_0\|_{\beta,\rK} + \sup_{t\in [0,T]} \|M^{{N}}_{t}\|_{\beta,\rK} \right) .
\end{equation}

\medskip

\noindent\textbf{Step 5}.
 The embedding for Bessel potential spaces of \cite[p.203]{Triebel} gives that $H^{\beta + d(\frac{1}{2}-\frac{1}{\rK})}(\R^d)$ is continuously embedded into $H^\beta_{\rK}(\R^d)$, thus we obtain
\begin{align*}
 \|M^N_t\|_{\beta,\rK} \leq C \, \|M^N_t\|_{\beta + d(\frac{1}{2}-\frac{1}{\rK}),2}  .
\end{align*}
Now combining Proposition \ref{prop:martingale-bound-Hbeta} with Lemma \ref{lem:GRR} as in the proof of Proposition \ref{prop:martingale-bound}, we get that for any $\varepsilon>0$, there exists $C>0$ such that
$$\left\| \sup_{t\in [0,T]}\|M^N_t\|_{\beta,\rK} \right\|_{L^q(\Omega)}\leq C\,  N^{-\frac{1}{2} \left( 1-\alpha(d + 2\beta + 2d(\frac{1}{2}-\frac{1}{\rK})) \right) +\varepsilon}.$$
In view of the constraint in \eqref{eq:alfa_appendix}, it follows that
\begin{align*}
\left\|\sup_{t\in [0,T]} \|M^N_t\|_{\beta,\rK} \right\|_{L^q(\Omega)}\leq C.
\end{align*}
Taking the $L^q(\Omega)$ norm in \eqref{eq:suputN} and applying the previous inequality, we obtain the desired result.
\end{proof}


\addcontentsline{toc}{section}{References}

~

~


\begin{thebibliography}{58}
\providecommand{\natexlab}[1]{#1}
\providecommand{\url}[1]{\texttt{#1}}
\expandafter\ifx\csname urlstyle\endcsname\relax
  \providecommand{\doi}[1]{doi: #1}\else
  \providecommand{\doi}{doi: \begingroup \urlstyle{rm}\Url}\fi

\bibitem[Bahouri et~al.(2011)Bahouri, Chemin, and Danchin]{BCD}
H.~Bahouri, J.-Y. Chemin and R.~Danchin, 
\newblock ``Fourier Analysis and Nonlinear Partial Differential
  Equations'',
\newblock Grundlehren der Mathematischen Wisserchaften, volume 343, Springer Verlag, 2011.

\bibitem[Bauer et~al.(2018)Bauer, Meyer-Brandis, and Proske]{BauerEtAl}
M.~Bauer, T.~Meyer-Brandis and F.~Proske,
\newblock \emph{Strong solutions of mean-field stochastic differential equations with
  irregular drift},
\newblock Electron. J. Probab. {\bf 23} (2018), 1--35.

\bibitem[Biler(2018)]{Biler}
P.~Biler,
\newblock \emph{Mathematical challenges in the theory of chemotaxis},
\newblock Ann. Math. Sil. {\bf 32} (2018), 43--63.

\bibitem[Bogachev(2007)]{BogachevII}
V.~I. Bogachev,
\newblock ``Measure Theory, {V}ol. {II}'',
\newblock Springer-Verlag, Berlin, 2007.

\bibitem[Bossy and Talay(1997)]{BossyTalay}
M.~Bossy and D.~Talay,
\newblock \emph{A stochastic particle method for the {M}c{K}ean-{V}lasov and the
  {B}urgers equation}, 
\newblock Math. Comp. {\bf 66} (1997), 157--192.

\bibitem[Bresch et~al.(2020)Bresch, Jabin, and Wang]{BJW2020}
D.~Bresch, P.-E. Jabin and Z.~Wang,
\newblock \emph{Mean-field limit and quantitative estimates with singular attractive
  kernels},
\newblock Preprint
  \href{https://arxiv.org/abs/arXiv:2011.08022}{arXiv:2011.08022} (2020).

\bibitem[Brezis(2010)]{Brezis}
H.~Brezis, 
\newblock ``Functional Analysis, Sobolev Spaces and Partial Differential
  Equations'', 
\newblock Springer, 2010.

\bibitem[Cattiaux and P{\'e}d\`eches(2016)]{CattiauxPedeches}
P.~Cattiaux and L.~P{\'e}d\`eches,
\newblock \emph{The 2-D stochastic Keller-Segel particle model: existence and
  uniqueness},
\newblock ALEA {\bf13} (2016), 447--463.

\bibitem[Cattiaux et~al.(2008)Cattiaux, Guillin, and
  Malrieu]{CattiauxGuillinMalrieu}
P.~Cattiaux, A.~Guillin and F.~Malrieu,
\newblock \emph{Probabilistic approach for granular media equations in the
  non-uniformly convex case},
\newblock Probab. Theory Related Fields {\bf 140} (2008),
  19--40.

\bibitem[C\'{e}pa and L\'{e}pingle(1997)]{CepaLepingle}
E.~C\'{e}pa and D.~L\'{e}pingle,
\newblock \emph{Diffusing particles with electrostatic repulsion},
\newblock Probab. Theory Related Fields  {\bf 107} (1997),
  429--449.

\bibitem[Chaudru~de Raynal(2020)]{Chaudru}
P.-E. Chaudru~de Raynal,
\newblock \emph{Strong well posedness of {M}c{K}ean-{V}lasov stochastic differential
  equations with {H}\"{o}lder drift},
\newblock Stochastic Process. Appl. {\bf 130} (2020), 79--107.

\bibitem[Chaudru~de Raynal and Frikha(2018)]{deraynal2019}
P.-E. Chaudru~de Raynal and N.~Frikha,
\newblock \emph{Well-posedness for some non-linear diffusion processes and related
  PDE on the {W}asserstein space},
\newblock Preprint
  \href{https://arxiv.org/abs/1811.06904}{arXiv:1811.06904} (2018).

\bibitem[Cortez and Fontbona(2018)]{Font}
R.~Cortez and J.~Fontbona,
\newblock \emph{Quantitative uniform propagation of chaos for {M}axwell molecules},
\newblock Comm. Math. Phys. {\bf 357} (2018), 913--941.

\bibitem[Delarue et~al.(2015)Delarue, Inglis, Rubenthaler, and Tanr\'{e}]{DIRT}
F.~Delarue, J.~Inglis, S.~Rubenthaler and E.~Tanr\'{e},
\newblock \emph{Particle systems with a singular mean-field self-excitation.
  {A}pplication to neuronal networks},
\newblock Stochastic Process. Appl. {\bf 125} (2015),
  2451--2492.

\bibitem[Duerinckx(2016)]{Duerinckx}
M.~Duerinckx,
\newblock \emph{Mean-field limits for some {R}iesz interaction gradient flows},
\newblock SIAM J. Math. Anal. {\bf 48} (2016), 2269--2300.

\bibitem[Flandoli and Leocata(2019)]{FlandoliLeocata}
F.~Flandoli and M.~Leocata,
\newblock \emph{A particle system approach to aggregation phenomena},
\newblock J. Appl. Probab. {\bf 56} (2019), 282--306.

\bibitem[Flandoli et~al.(2019{\natexlab{a}})Flandoli, Leimbach, and
  Olivera]{FlandoliLeimbachOlivera}
F.~Flandoli, M.~Leimbach and C.~Olivera,
\newblock \emph{Uniform convergence of proliferating particles to the FKPP
  equation},
\newblock J. Math. Anal. Appl. {\bf 473} (2019), 27--52.

\bibitem[Flandoli et~al.(2019{\natexlab{b}})Flandoli, Leocata, and
  Ricci]{FlandoliLeocataRicci0}
F.~Flandoli, M.~Leocata and C.~Ricci,
\newblock \emph{The {V}lasov-{N}avier-{S}tokes equations as a mean field limit},
\newblock Discrete Contin. Dyn. Syst. Ser. B {\bf 24} (2019),
  3741--3753.

\bibitem[Flandoli et~al.(2020{\natexlab{a}})Flandoli, Leocata, and
  Ricci]{FlandoliLeocataRicci}
F.~Flandoli, M.~Leocata and C.~Ricci,
\newblock \emph{On the macroscopic limit of {B}rownian particles with local
  interaction},
\newblock Stoch. Dyn. {\bf 20} (2020), 2040007.

\bibitem[Flandoli et~al.(2020{\natexlab{b}})Flandoli, Olivera, and
  Simon]{FlandoliOliveraSimon}
F.~Flandoli, C.~Olivera and M.~Simon,
\newblock \emph{Uniform approximation of 2 dimensional {N}avier-{S}tokes equation by
  stochastic interacting particle systems},
\newblock SIAM J. Math. Anal. {\bf 52} (2020), 5339--5362.

\bibitem[Fournier and Hauray(2016)]{FournierHauray}
N.~Fournier and M.~Hauray,
\newblock \emph{Propagation of chaos for the {L}andau equation with moderately soft
  potentials},
\newblock Ann. Probab. {\bf 44} (2016), 3581--3660.

\bibitem[Fournier and Jourdain(2017)]{FournierJourdain}
N.~Fournier and B.~Jourdain,
\newblock \emph{Stochastic particle approximation of the Keller--Segel equation and
  two-dimensional generalization of Bessel processes},
\newblock Ann. Appl. Probab. {\bf 27} (2017), 2807--2861.

\bibitem[Fournier and Mischler(2016)]{FournierMischler}
N.~Fournier and S.~Mischler,
\newblock \emph{Rate of convergence of the {N}anbu particle system for hard
  potentials and {M}axwell molecules},
\newblock Ann. Probab. {\bf 44} (2016), 589--627.

\bibitem[Fournier et~al.(2014)Fournier, Hauray, and Mischler]{FHM}
N.~Fournier, M.~Hauray and S.~Mischler,
\newblock \emph{Propagation of chaos for the 2{D} viscous vortex model},
\newblock J. Eur. Math. Soc. (JEMS) {\bf 16} (2014),
  1423--1466.

\bibitem[Friz and Victoir(2010)]{FrizVictoir}
P.~K. Friz and N.~B. Victoir,
\newblock ``Multidimensional Stochastic Processes as Rough Paths'', volume
  120 of Cambridge Studies in Advanced Mathematics, 
\newblock Cambridge University Press, Cambridge, 2010.

\bibitem[Garsia et~al.(1970/71)Garsia, Rodemich, and Rumsey]{GRR}
A.~M. Garsia, E.~Rodemich and H.~Rumsey, Jr,
\newblock \emph{A real variable lemma and the continuity of paths of some {G}aussian
  processes},
\newblock Indiana Univ. Math. J. {\bf 20} (1970/71), 565--578.

\bibitem[Godinho and Qui\~{n}inao(2015)]{Godinho}
D.~Godinho and C.~Qui\~{n}inao,
\newblock \emph{Propagation of chaos for a subcritical {K}eller-{S}egel model},
\newblock Ann. Inst. Henri Poincar\'{e} Probab. Stat. {\bf 51} (2015), 965--992.

\bibitem[Grafakos(2014)]{Grafakos}
L.~Grafakos,
\newblock ``Classical {F}ourier Analysis'', volume 249 of Graduate
  Texts in Mathematics,
\newblock Springer, New York, third edition, 2014.

\bibitem[Henry(1981)]{Henry}
D.~Henry,
\newblock ``Geometric Theory of Semilinear Parabolic Equations'', volume 840
  of Lecture Notes in Mathematics,
\newblock Springer-Verlag, Berlin-New York, 1981.

\bibitem[Jabin and Wang(2018)]{JabinWang}
P.-E. Jabin and Z.~Wang,
\newblock \emph{Quantitative estimates of propagation of chaos for stochastic systems
  with {$W^{-1,\infty}$} kernels},
\newblock Invent. Math. {\bf 214} (2018), 523--591.

\bibitem[Jabir et~al.(2018)Jabir, Talay, and
  Toma{\v{s}}evi{\'c}]{JabirTalayTomasevic}
J.-F. Jabir, D.~Talay and M.~Toma{\v{s}}evi{\'c},
\newblock \emph{Mean-field limit of a particle approximation of the one-dimensional
  parabolic-parabolic Keller-Segel model without smoothing},
\newblock Electron. Commun. Probab. {\bf 23} (2018).

\bibitem[Jourdain and M\'{e}l\'{e}ard(1998)]{JourdainMeleard}
B.~Jourdain and S.~M\'{e}l\'{e}ard,
\newblock \emph{Propagation of chaos and fluctuations for a moderate model with
  smooth initial data},
\newblock Ann. Inst. H. Poincar\'{e} Probab. Statist. {\bf 34} (1998), 727--766.

\bibitem[Kac(1956)]{Kac}
M.~Kac,
\newblock \emph{Foundations of kinetic theory},
\newblock In: ``Proceedings of the {T}hird {B}erkeley {S}ymposium on
  {M}athematical {S}tatistics and {P}robability'', 1954--1955, vol. {III}, University of California Press, Berkeley and Los Angeles, 1956, 171--197.

\bibitem[Lacker(2018)]{lacker2018}
D.~Lacker,
\newblock \emph{On a strong form of propagation of chaos for {M}c{K}ean-{V}lasov
  equations},
\newblock Electron. Commun. Probab. {\bf 23} (2018).

\bibitem[Marchioro and Pulvirenti(1982)]{Marchioro}
C.~Marchioro and M.~Pulvirenti,
\newblock \emph{Hydrodynamics in two dimensions and vortex theory},
\newblock Comm. Math. Phys. {\bf 84} (1982), 483--503.

\bibitem[M\'{e}l\'{e}ard(1996)]{MeleardLN}
S.~M\'{e}l\'{e}ard,
\newblock \emph{Asymptotic behaviour of some interacting particle systems;
  {M}c{K}ean-{V}lasov and {B}oltzmann models},
\newblock In: ``Probabilistic models for nonlinear partial differential
  equations'' ({M}ontecatini {T}erme, 1995), volume 1627 of Lecture Notes
  in Math., Springer, Berlin, 1996, 42--95.

\bibitem[M\'{e}l\'{e}ard(2000)]{MeleardNS2d}
S.~M\'{e}l\'{e}ard,
\newblock \emph{A trajectorial proof of the vortex method for the two-dimensional
  {N}avier-{S}tokes equation},
\newblock Ann. Appl. Probab. {\bf 10} (2000), 1197--1211.

\bibitem[M\'{e}l\'{e}ard and Roelly-Coppoletta(1987)]{Meleard}
S.~M\'{e}l\'{e}ard and S.~Roelly-Coppoletta,
\newblock \emph{A propagation of chaos result for a system of particles with
  moderate interaction},
\newblock Stochastic Process. Appl. {\bf 26} (1987), 317--332.

\bibitem[Mishura and Veretennikov(2020)]{mishura2020}
Y.~Mishura and A.~Veretennikov,
\newblock \emph{Existence and uniqueness theorems for solutions of McKean--Vlasov
  stochastic equations},
\newblock Theor. Probability and Math. Statist. {\bf 103} (2020).

\bibitem[Nagai(2011)]{Nagai11}
T.~Nagai,
\newblock \emph{Global existence and decay estimates of solutions to a
  parabolic-elliptic system of drift-diffusion type in $\R^2$},
\newblock Differential Integral Equations {\bf 24} (2011),
  29--68.

\bibitem[Oelschl{\"a}ger(1985)]{Oelschlager85}
K.~Oelschl{\"a}ger,
\newblock \emph{A law of large numbers for moderately interacting diffusion
  processes},
\newblock Zeitschrift f{\"u}r Wahrscheinlichkeitstheorie und verwandte
  Gebiete {\bf 69} (1985), 279--322.

\bibitem[Oelschl\"{a}ger(1987)]{Oelschlager87}
K.~Oelschl\"{a}ger,
\newblock \emph{A fluctuation theorem for moderately interacting diffusion processes},
\newblock Probab. Theory Related Fields {\bf 74} (1987),
  591--616.

\bibitem[Olivera et~al.(2020)Olivera, Richard, and Toma{\v{s}}evi{\'c}]{ORT}
C.~Olivera, A.~Richard and M.~Toma{\v{s}}evi{\'c},
\newblock \emph{Particle approximation of the $2$-$d$ parabolic-elliptic
  Keller-Segel system in the subcritical regime},
\newblock Preprint
  \href{https://arxiv.org/abs/2004.03177v1}{arXiv:2004.03177}  (2020).

\bibitem[Osada(1986)]{Osada}
H.~Osada,
\newblock \emph{Propagation of chaos for the two-dimensional {N}avier-{S}tokes
  equation},
\newblock Proc. Japan Acad. Ser. A Math. Sci. {\bf 62} (1986),
  8--11.

\bibitem[Richard et~al.(2021)Richard, Tan, and Yang]{RTY}
A.~Richard, X.~Tan and F.~Yang,
\newblock \emph{Discrete-time simulation of stochastic {V}olterra equations},
\newblock Stochastic Process. Appl. {\bf 141} (2021), 109--138.

\bibitem[R\"ockner and Zhang(2021)]{rockner2019}
M.~R\"ockner and X.~Zhang,
\newblock \emph{Well-posedness of distribution dependent SDEs with singular drifts},
\newblock Bernoulli {\bf 27} (2021), 1131 -- 1158.

\bibitem[Rosenzweig(2020)]{Rose3}
M.~Rosenzweig,
\newblock \emph{Mean-field convergence of systems of particles with Coulomb
  interactions in higher dimensions without regularity},
\newblock Preprint
  \href{https://arxiv.org/abs/2010.10009}{arXiv:2010.10009} (2020).

\bibitem[Rudin(1991)]{RudinFA}
W.~Rudin,
\newblock ``Functional Analysis'',
\newblock International Series in Pure and Applied Mathematics, McGraw-Hill,
  Inc., New York, second edition, 1991.

\bibitem[Samko et~al.(1993)Samko, Kilbas, and Marichev]{SKM}
S.~G. Samko, A.~A. Kilbas and O.~I. Marichev,
\newblock ``Fractional Integrals and Derivatives'',
\newblock Gordon and Breach Science Publishers, Yverdon, 1993.
\newblock Edited and with a foreword by S. M.
  Nikolskii, Translated from the 1987 Russian original, Revised by the authors.

\bibitem[Serfaty(2020)]{Serfaty}
S.~Serfaty,
\newblock \emph{Mean field limit for {C}oulomb-type flows},
\newblock Duke Math. J. {\bf 169} (2020), 2887--2935.

\bibitem[Simon and Olivera(2018)]{Simon}
M.~Simon and C.~Olivera,
\newblock \emph{Non-local conservation law from stochastic particle systems},
\newblock J. Dynam. Differential Equations {\bf 30} (2018), 1661--1682.

\bibitem[Sznitman(1986)]{Stigm}
A.-S. Sznitman,
\newblock \emph{A propagation of chaos result for {B}urgers' equation},
\newblock Probab. Theory Related Fields {\bf 71} (1986), 581--613.

\bibitem[Sznitman(1991)]{Sznit}
A.-S. Sznitman,
\newblock \emph{Topics in propagation of chaos},
\newblock In: ``\'{E}cole d'\'{E}t\'{e} de {P}robabilit\'{e}s de
  {S}aint-{F}lour {XIX}---1989'', volume 1464 of Lecture Notes in Math., Springer, Berlin, 1991, 165--251.

\bibitem[Talay and Toma{\v{s}}evi{\'c}(2020)]{TalayTomasevic}
D.~Talay and M.~Toma{\v{s}}evi{\'c},
\newblock \emph{A new McKean--Vlasov stochastic interpretation of the
  parabolic--parabolic Keller--Segel model: The one-dimensional case},
\newblock Bernoulli {\bf 26} (2020), 1323--1353.

\bibitem[Toma\v{s}evi\'c(2018)]{MilicaThesis}
M.~Toma\v{s}evi\'c,
\newblock \emph{On a probabilistic interpretation of the parabolic-parabolic
  {K}eller-{S}egel model},
\newblock Ph.{D}. {T}hesis, 2018.

\bibitem[Triebel(1978)]{Triebel}
H.~Triebel,
\newblock ``Interpolation Theory, Function Spaces, Differential Operators'',
  volume~18 of North-Holland Mathematical Library,
\newblock North-Holland Publishing Co., Amsterdam-New York, 1978.

\bibitem[Triebel(1983)]{TriebelTh}
H.~Triebel,
\newblock ``Theory of Function Spaces'', volume~78 of Monographs in
  Mathematics,
\newblock Birkh\"{a}user Verlag, Basel, 1983.

\bibitem[van Neerven et~al.(2007)van Neerven, Veraar, and Weis]{vNeervenEtAl}
J.~M. A.~M. van Neerven, M.~C. Veraar and L.~Weis,
\newblock \emph{Stochastic integration in {UMD} {B}anach spaces},
\newblock Ann. Probab. {\bf 35} (2007), 1438--1478.

\end{thebibliography}
\end{document}